\newcommand{\Cset}{\mathop{\mathds{C}}}
\newcommand{\Nset}{\mathop{\mathds{N}}}
\newcommand{\Zset}{\mathop{\mathds{Z}}}
\newcommand{\Rset}{\mathop{\mathds{R}}}
\newcommand{\st}[1]{\mathop{S(\frac{1}{#1\rho})}}
\newcommand{\sech}{\mathop{\mathrm{sech}}}
\newcommand{\PP}[2]{P^{\scriptscriptstyle{\mathcal{M}}}_{#1}(#2)}
\newcommand{\mom}{\ensuremath{{\mathcal{M}}}}
\newcommand{\Mi}{~_{\!\!\!{\mathcal{M}}}}
\newcommand{\Mscal}[2]{\langle #1, #2\rangle_{~_{\!\!\!\!{\mathcal{M}}}}}
\newcommand{\K}[1]{\ensuremath{{\mathcal{K}}^{#1}}}
\newcommand{\LL}{\ensuremath{L_{\!{\scriptscriptstyle{\mathcal{M}}}}^2}}
\newcommand{\LB}{\ensuremath{{\mathbf L}_{{\!{\scriptscriptstyle{\mathcal{M}}}}}^2}}
\newcommand{\LT}{\ensuremath{L^2_{a(\omega)} }}
\newcommand{\norm}[1]{\left\|#1\right\|_{\scriptscriptstyle{\!{\mathcal{M}}}}}
\newcommand{\noi}[1]{\left\| #1\right\|_{a(\omega)}}
\newcommand{\doti}[2]{\langle #1, #2\rangle_{a(\omega)}}
\newcommand{\Scal}[2]{\langle #1, #2
            \rangle^{\scriptscriptstyle{\mathcal{M}}}}
\newcommand{\ScalT}[2]{\langle #1, #2
            \rangle^{\scriptscriptstyle{T}}}
\newcommand{\ScalH}[2]{\langle #1, #2
            \rangle^{\scriptscriptstyle{H}}}
\newcommand{\KK}{\ensuremath{\{\K{n}\}^
{\scriptscriptstyle{\mathcal{M}}}_{n\in \Nset}}}
\newcommand{\PPP}{\ensuremath{\{\PP{n}{\omega}\}_
{n\in \Nset}}}
\newcommand{\sinc}[1]{\mathrm{sinc}\, #1}
\newcommand{\BL}[1]{\ensuremath{\mathbf{BL}(#1)}}
\newcommand{\CE}{\mathrm{CE}^{\!\scriptscriptstyle{\mathcal{M}}}}
\newcommand{\CA}{\mathrm{CA}^{\!\scriptscriptstyle{\mathcal{M}}}}
\newcommand{\mm}{\mathbi{m}}
\newcommand{\dd}{{\mathop{{D}}}}
\newcommand{\da}{{\mathop{{\rm d}a(\omega)}}}
\newcommand{\e}{\mathop{{\mathrm{e}}}}
\newcommand{\ii}{\mathop{{\mathrm{i}}}}
\newcommand{\wght}{\mathop{\mathrm{w}}}
\def\mathbi#1{\textbf{\textit #1}}
\newcommand{\CC}{\ensuremath{\mathcal{C}^{\scriptscriptstyle{\mathcal{M}}}}}
\newcommand{\CT}{\ensuremath{\mathcal{C}_2^{\scriptscriptstyle{T}}}}
\newcommand{\CH}{\ensuremath{\mathcal{C}_2^{\!\scriptscriptstyle{H}}}}
\newcommand{\Norm}[1]{\left\|#1\right
           \|^{\scriptscriptstyle{{\mathcal{M}}}}}
\newcommand{\LIM}[1]{\lim_{#1\rightarrow\infty}}
\newcommand{\PT}[2]{P_{#1}^{\scriptscriptstyle{T}}(#2)}
\newcommand{\PH}[2]{P_{#1}^{\scriptscriptstyle{H}}(#2)}
\newcommand{\Langle}{\langle\!\langle}
\newcommand{\Rangle}{\rangle\!\rangle}
\newcommand{\FT}{\mathcal{F}^{\Mi}}
\newtheorem{theorem}{theorem}[section]
\newtheorem{corollary}[theorem]{Corollary}
\newtheorem{proposition}[theorem]{Proposition}
\newtheorem{conjecture}[theorem]{Conjecture}
\newtheorem{lemma}[theorem]{Lemma}
\newtheorem{definition}[theorem]{Definition}
\newtheorem{question}{Question}
\begin{document}

\title{Chromatic Derivatives, Chromatic Expansions and
Associated Spaces}

\thanks{Some of the results from this paper were presented at \emph{UNSW
Research Workshop on Function Spaces and Applications}, Sydney,
December 2-6, 2008, and at \emph{SAMPTA 09}, Marseille, May
18-22, 2009.}

\author{Aleksandar Ignjatovi\'{c}}

\address{School of Computer Science and Engineering,
University of New South Wales {\rm and} National ICT Australia
(NICTA), Sydney, NSW 2052, Australia}

\email{ignjat@cse.unsw.edu.au}
\urladdr{www.cse.unsw.edu.au/\~{}{ignjat}}

\keywords{chromatic derivatives, chromatic expansions,
orthogonal systems, orthogonal polynomials, special functions,
signal representation}

\subjclass[2000]{41A58, 42C15, 94A12, 94A20}

\begin{abstract}
This paper presents the basic properties of chromatic
derivatives and chromatic expansions and provides an
appropriate motivation for introducing these notions. Chromatic
derivatives are special, numerically robust linear differential
operators which correspond to certain families of orthogonal
polynomials. Chromatic expansions are series of the
corresponding special functions, which possess the best
features of both the Taylor and the Shannon expansions. This
makes chromatic derivatives and chromatic expansions applicable
in fields involving empirically sampled data, such as digital
signal and image processing.
\end{abstract}

\maketitle

\section{Extended Abstract}

Let $\BL{\pi}$ be the space of continuous $L^2$ functions with
the Fourier transform supported within $[-\pi,\pi]$ (i.e., the
space of $\pi$ band limited signals of finite energy), and let
$P^{\scriptscriptstyle{L}}_n(\omega)$ be obtained by
normalizing and scaling the Legendre polynomials,  so that
\[
\frac{1}{2\pi}\int_{-\pi}^{\pi}
P^{\scriptscriptstyle{L}}_{\scriptstyle{n}}
(\omega)\;P^{\scriptscriptstyle{L}}_{\scriptstyle{m}}
(\omega){\rm d}\omega=\delta(m-n).
\]
We consider linear differential operators $\K{n}=(-{\ii})^{n}
P^{\scriptscriptstyle{L}}_{\scriptstyle{n}}
\left(\ii\;\frac{{\rm d}}{{\rm d} t}\right)$; for such
operators and every $f\in\BL{\pi}$,
\[
\K{n}[f](t)=\frac{1}{2\pi}\int_{-\pi}^{\pi}{\ii}^n\;
P^{\scriptscriptstyle{L}}_{\scriptstyle{n}}(\omega)
\widehat{f}(\omega){\e}^{\ii
\omega t} {\rm d}\omega.
\]

We show that for $f\in\BL{\pi}$ the values of $\K{n}[f](t)$ can
be obtained in a numerically accurate and noise robust way from
samples of $f(t)$, even for differential operators $\K{n}$ of
high order.

Operators $\K{n}$ have the following remarkable properties,
relevant for applications in digital signal processing.

\begin{proposition}
Let $f:\Rset\rightarrow\Rset$ be a restriction of any entire
function; then the following are equivalent:
\begin{enumerate}[(a)]
\item $\sum_{n=0}^{\infty}\K{n}[f](0)^2 < \infty$;
\item for all $t\in\Rset$ the sum
    $\sum_{n=0}^{\infty}\K{n}[f](t)^2$ converges, and its
    values are independent of $t\in\Rset$;
\item $f\in\BL{\pi}$.
\end{enumerate}
\end{proposition}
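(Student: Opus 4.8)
The plan is to exploit the fact that $\{P^L_n\}$ is an orthonormal basis for $L^2[-\pi,\pi]$ with the normalized measure $\frac{1}{2\pi}d\omega$, together with the integral representation of $\K{n}[f]$ given in the excerpt, which for $f\in\BL{\pi}$ says that $\K{n}[f](t)$ is (up to the unimodular factor $\ii^n$) the Fourier coefficient of $\widehat f(\omega)\e^{\ii\omega t}$ against $P^L_n(\omega)$. So the cleanest route is $(c)\Rightarrow(b)\Rightarrow(a)$ being essentially immediate from Parseval, and the real content in $(a)\Rightarrow(c)$.

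**First I would prove $(c)\Rightarrow(b)$.** If $f\in\BL{\pi}$ then $\widehat f\in L^2[-\pi,\pi]$, and for each fixed $t$ the function $\omega\mapsto\widehat f(\omega)\e^{\ii\omega t}$ is again in $L^2[-\pi,\pi]$ with the same norm as $\widehat f$. By Parseval's identity for the orthonormal system $\{P^L_n\}$,
\[
\sum_{n=0}^\infty \K{n}[f](t)^2=\sum_{n=0}^\infty\Big|\frac{1}{2\pi}\int_{-\pi}^\pi P^L_n(\omega)\widehat f(\omega)\e^{\ii\omega t}\,{\rm d}\omega\Big|^2=\frac{1}{2\pi}\int_{-\pi}^\pi|\widehat f(\omega)|^2\,{\rm d}\omega,
\]
which converges and is manifestly independent of $t$; this is both the convergence and the constancy asserted in (b). Then $(b)\Rightarrow(a)$ is the special case $t=0$, so nothing is needed there.

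**The substance is $(a)\Rightarrow(c)$**, and here is where the entire-function hypothesis is used. Assume $\sum_n\K{n}[f](0)^2<\infty$. The idea is to build a candidate band-limited function $g$ out of this $\ell^2$ data and then show $g=f$ using entireness. Concretely, the $\ell^2$ sequence $(\ii^n\K{n}[f](0))_n$ defines, via $\phi(\omega):=\sum_n \ii^n\K{n}[f](0)\,P^L_n(\omega)$, a function $\phi\in L^2[-\pi,\pi]$; let $g$ be the $\pi$-band-limited function with $\widehat g=\phi$, so $g\in\BL{\pi}$. By construction and the already-proved direction, $\K{n}[g](0)=\K{n}[f](0)$ for all $n$. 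Now I would argue that the operators $\K{n}$, being $(-\ii)^nP^L_n(\ii\,{\rm d}/{\rm d}t)$ where $P^L_n$ has exact degree $n$, span (over $n\le N$) the same space as the ordinary derivatives $f^{(0)},\dots,f^{(N)}$; hence knowing all $\K{n}[f](0)$ is equivalent to knowing all Taylor coefficients $f^{(k)}(0)$, and likewise for $g$. Since $g$ is entire (being band-limited) and $f$ is entire by hypothesis, and they have the same Taylor expansion at $0$, we get $f=g\in\BL{\pi}$.

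**The hard part will be** making the last step fully rigorous — specifically, justifying that $g=\sum_n\K{n}[f](0)$'s data is entire and that term-by-term differentiation is legitimate, i.e.\ that the $\ell^2$ condition on the chromatic derivatives at $0$ really does pin down a convergent Taylor series with the claimed sum. I expect the paper has already established (earlier than this excerpt, in the machinery leading up to chromatic expansions) a reconstruction formula of the form $f(t)=\sum_n\K{n}[f](0)\,\K{n}[\sinc](t)$ valid for $f\in\BL{\pi}$, plus the fact that $\{\K{n}[\sinc]\}$ is an orthonormal system in $\BL{\pi}$; granting that, the cleanest finish is: the $\ell^2$ hypothesis (a) guarantees $g(t):=\sum_n\K{n}[f](0)\,\K{n}[\sinc](t)$ converges in $\BL{\pi}$, its chromatic derivatives at $0$ match those of $f$ by orthonormality, hence all its ordinary derivatives at $0$ match those of $f$, hence $f=g$ by analytic continuation since both are entire — and then (c) holds. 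The one genuine subtlety to watch is the passage from "agree to all orders at a point" to "equal," which needs the entireness of $f$ and cannot be dropped.
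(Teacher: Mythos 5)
Your proof is correct and follows essentially the same route as the paper: the paper proves the general version by establishing Parseval-type identities for $\K{n}[f_\varphi](u)$ (Proposition~\ref{some} and Corollaries~\ref{constant}, \ref{independent}), and handles $(a)\Rightarrow(c)$ exactly as you do, by building $\varphi_f=\sum_n(-\ii)^n\K{n}[f](0)P_n(\omega)\in\LT$, recovering a band-limited $f_{\varphi_f}$ from it, matching chromatic (hence ordinary) derivatives at $0$, and invoking analyticity to conclude $f=f_{\varphi_f}$ (Proposition~\ref{bijection}). The only slip is the sign in your definition of $\phi$: it should carry $(-\ii)^n$ rather than $\ii^n$ so that $\K{n}[g](0)=\ii^n\cdot(-\ii)^n\K{n}[f](0)=\K{n}[f](0)$.
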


Moreover, the following Proposition provides local
representation of the usual norm, the scalar product and the
convolution in $\BL{\pi}$.

\begin{proposition} For all  $f, g\in\BL{\pi}$
the following sums do not depend on $t\in\Rset$, and
\begin{eqnarray}
\sum_{n=0}^{\infty}\K{n}[f](t)^2\hspace*{-2mm}&=&\hspace*{-2mm}
\int_{-\infty}^{\infty} f(x)^2 dx;\nonumber\\
\sum_{n=0}^{\infty}\K{n}[f](t)\K{n}[g](t)
\hspace*{-2mm}&=&\hspace*{-2mm} \int_{-\infty}^{\infty}
f(x)g(x) dx;\nonumber\\
\sum_{n=0}^{\infty}\K{n}[f](t) \K{n}_t[g(u-t)]
\hspace*{-2mm}&=&\hspace*{-2mm}
\int_{-\infty}^{\infty}f(x)g(u-x) dx.\nonumber
\end{eqnarray}
\end{proposition}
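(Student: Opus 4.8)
The plan is to pull the three identities back to Parseval's identity in the Hilbert space $H:=L^2\big([-\pi,\pi],\frac{1}{2\pi}{\rm d}\omega\big)$, exploiting that for a real-valued $h\in\BL{\pi}$ the quantity $\K{n}[h](t)$ is, up to a unimodular twist, the $n$-th Legendre coefficient of $\omega\mapsto\widehat h(\omega)\,\e^{\ii\omega t}$. Concretely, set $\phi^{h}_{t}(\omega):=\widehat h(\omega)\,\e^{\ii\omega t}$ and $\rho_n(\omega):=(-\ii)^n P^{\scriptscriptstyle{L}}_n(\omega)$. Since $\widehat h\in L^2[-\pi,\pi]$ and $|\e^{\ii\omega t}|=1$, we have $\phi^h_t\in H$ with $\|\phi^h_t\|_H=\|\widehat h\|_H$ for every $t$; since the Legendre polynomials form a complete orthogonal system of $L^2[-\pi,\pi]$ (Weierstrass) and $P^{\scriptscriptstyle{L}}_n$ is real, $\{\rho_n\}_{n\in\Nset}$ is an orthonormal basis of $H$; and the integral formula for $\K{n}$ recalled above says precisely that $\K{n}[h](t)=\langle\phi^h_t,\rho_n\rangle_H$. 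I would also record at the outset that $\K{n}[h](t)\in\Rset$ whenever $h$ is real-valued: substituting $\omega\mapsto-\omega$ in the defining integral and using $\widehat h(-\omega)=\overline{\widehat h(\omega)}$ together with the parity $P^{\scriptscriptstyle{L}}_n(-\omega)=(-1)^nP^{\scriptscriptstyle{L}}_n(\omega)$ turns the integral into its own complex conjugate.

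Granting this, the first identity is immediate:
\[
\sum_{n}\K{n}[f](t)^2=\sum_n\big|\langle\phi^f_t,\rho_n\rangle_H\big|^2=\|\phi^f_t\|_H^2=\|\widehat f\|_H^2=\int_{-\infty}^{\infty}f(x)^2\,{\rm d}x,
\]
the last step being Plancherel's theorem, and the value manifestly does not involve $t$. For the second identity the only subtlety is to shed the conjugate that the sesquilinear Parseval formula would carry: as $\K{n}[g](t)$ is real, $\langle\phi^g_t,\rho_n\rangle_H$ equals its own conjugate, so
\[
\sum_{n}\K{n}[f](t)\,\K{n}[g](t)=\langle\phi^f_t,\phi^g_t\rangle_H=\frac{1}{2\pi}\int_{-\pi}^{\pi}\widehat f(\omega)\,\overline{\widehat g(\omega)}\,{\rm d}\omega=\int_{-\infty}^{\infty}f(x)g(x)\,{\rm d}x,
\]
again by Plancherel and again with the $\e^{\ii\omega t}$ factors cancelling.

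For the convolution identity I would first compute the Fourier transform of $t\mapsto g(u-t)$, namely $\e^{-\ii\omega u}\widehat g(-\omega)$; this is supported in $[-\pi,\pi]$ and lies in $L^2$, so $t\mapsto g(u-t)$ belongs to $\BL{\pi}$, is real-valued, and satisfies $\K{n}_t[g(u-t)]=\langle\chi_{t,u},\rho_n\rangle_H$ with $\chi_{t,u}(\omega):=\e^{-\ii\omega u}\widehat g(-\omega)\,\e^{\ii\omega t}$. Since this quantity is real, Parseval again gives $\sum_{n}\K{n}[f](t)\,\K{n}_t[g(u-t)]=\langle\phi^f_t,\chi_{t,u}\rangle_H$, and evaluating the inner product and using $\overline{\widehat g(-\omega)}=\widehat g(\omega)$ (as $g$ is real) yields
\[
\sum_{n}\K{n}[f](t)\,\K{n}_t[g(u-t)]=\frac{1}{2\pi}\int_{-\pi}^{\pi}\widehat f(\omega)\,\widehat g(\omega)\,\e^{\ii\omega u}\,{\rm d}\omega.
\]
Finally $\widehat f\,\widehat g\in L^1[-\pi,\pi]$ by the Cauchy--Schwarz inequality, so the last integral converges absolutely; since $\widehat f\,\widehat g=\widehat{f\ast g}$ and $f\ast g$ is continuous (being a convolution of two functions in $L^2(\Rset)$), Fourier inversion identifies it with $(f\ast g)(u)=\int_{-\infty}^{\infty}f(x)g(u-x)\,{\rm d}x$, which does not depend on $t$.

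The only genuinely non-mechanical points are the two facts isolated at the outset: that $\{\rho_n\}$ is an orthonormal basis of $H$, and that $\K{n}[h](t)$ is real for real $h$ --- the latter being what allows the bilinear sums in the statement to be read off from the ordinary sesquilinear Parseval identity with no stray conjugates. Once these are in place, convergence of all three series, and absolute convergence of the cross terms, is automatic from Parseval and the Cauchy--Schwarz inequality for $\ell^2$, in accordance with the equivalence (a)$\Leftrightarrow$(b)$\Leftrightarrow$(c) of the preceding Proposition; the $t$-independence is built into every right-hand side.
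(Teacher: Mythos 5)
Your proof is correct and follows essentially the same route as the paper: the paper's general argument (Proposition 3.12, Corollary 3.13 and the computation preceding Proposition 3.20) identifies $(-\ii)^n\K{n}[f](u)$ as the $n$-th Fourier coefficient of $\FT[f](\omega)\e^{\ii\omega u}$ with respect to the complete orthonormal polynomial system in $\LT$ and then applies Parseval, which in the Legendre case is exactly your computation in $L^2\big([-\pi,\pi],\tfrac{1}{2\pi}{\rm d}\omega\big)$ with $\FT[f]=\widehat f$. Your explicit handling of the reality of $\K{n}[h](t)$ and the final Plancherel step converting to integrals over $\Rset$ are the specializations the paper leaves to Example 1.
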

The following proposition provides a form of Taylor's theorem,
with the differential operators $\K{n}$ replacing the
derivatives and the spherical Bessel functions replacing the
monomials.
\begin{proposition}
Let $j_n$ be the spherical Bessel functions of the first kind;
then:
\begin{enumerate}
\item  for every entire function $f$ and for all
    $z\in\Cset$,
\[
f(z) =\sum_{n=0}^{\infty}(-1)^n\K{n}[f](0)\K{n}[j_0(\pi z)]
=\sum_{n=0}^{\infty}\K{n}[f](0)\;\sqrt{2n+1}\;j_n(\pi z);
\]
\item if  $f\in\BL{\pi}$, then the series converges
    uniformly on $\Rset$ and in $L^2$.
\end{enumerate}
\end{proposition}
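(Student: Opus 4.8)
The plan is to prove item~(2) by a Fourier--analytic argument and to reduce item~(1) to a growth estimate; both rest on an explicit computation of the functions $\K{n}[j_0(\pi\,\cdot\,)]$, which I carry out first. Since $j_0(\pi t)=\sin(\pi t)/(\pi t)$ has Fourier transform equal to the indicator of $[-\pi,\pi]$, we have $j_0(\pi\,\cdot\,)\in\BL{\pi}$, so the integral formula for $\K{n}$ from the abstract applies. Writing $P^{\scriptscriptstyle{L}}_{n}(\omega)=\sqrt{2n+1}\,P_{n}(\omega/\pi)$ for the classical Legendre polynomial $P_n$ (this scaling is forced by the stated orthonormality), substituting $\omega=\pi u$, and invoking the classical integral representation $\int_{-1}^{1}P_{n}(u)\,\e^{\ii x u}\,\mathrm{d}u=2\,\ii^{n}j_{n}(x)$, one gets
\[ \K{n}[j_0(\pi\,\cdot\,)](t)=(-1)^{n}\sqrt{2n+1}\;j_{n}(\pi t), \]
which extends to $z\in\Cset$ by analytic continuation; in particular the two series in item~(1) agree term by term, so it suffices to treat the second one. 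Reading the displayed identity through the Fourier--multiplier description $\widehat{\K{n}[g]}(\omega)=\ii^{n}P^{\scriptscriptstyle{L}}_{n}(\omega)\widehat g(\omega)$ gives $\widehat{j_n(\pi\,\cdot\,)}(\omega)=\tfrac{(-1)^{n}}{\sqrt{2n+1}}\,\ii^{n}P^{\scriptscriptstyle{L}}_{n}(\omega)$ on $[-\pi,\pi]$, whence, by the orthonormality of $\{P^{\scriptscriptstyle{L}}_{n}\}$, the reproducing relation $\sqrt{2n+1}\,\K{m}[j_n(\pi\,\cdot\,)](0)=\delta_{mn}$, which I use below.

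For item~(2), let $f\in\BL{\pi}$. The Legendre polynomials are complete in $L^{2}[-\pi,\pi]$, so $\{P^{\scriptscriptstyle{L}}_{n}\}_{n}$ is an orthonormal basis of that space with the measure $\tfrac{1}{2\pi}\mathrm{d}\omega$; expanding $\widehat f=\sum_{n}\alpha_{n}P^{\scriptscriptstyle{L}}_{n}$ we get $\alpha_{n}=\tfrac{1}{2\pi}\int_{-\pi}^{\pi}\widehat f(\omega)P^{\scriptscriptstyle{L}}_{n}(\omega)\,\mathrm{d}\omega=(-\ii)^{n}\K{n}[f](0)$. Applying the inverse Fourier transform --- which, up to a constant, is an isometry $L^{2}[-\pi,\pi]\to\BL{\pi}$ --- term by term and using $\tfrac{1}{2\pi}\int_{-\pi}^{\pi}P^{\scriptscriptstyle{L}}_{n}(\omega)\,\e^{\ii\omega t}\,\mathrm{d}\omega=(-\ii)^{n}\K{n}[j_0(\pi\,\cdot\,)](t)$ yields $f=\sum_{n}(-1)^{n}\K{n}[f](0)\,\K{n}[j_0(\pi\,\cdot\,)]$, with convergence in $L^{2}(\Rset)$ by Plancherel. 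Uniform convergence on $\Rset$ follows by applying the elementary inequality $\|g\|_{\infty}\le\tfrac{1}{2\pi}\|\widehat g\|_{L^{1}[-\pi,\pi]}\le\tfrac{1}{\sqrt{2\pi}}\|\widehat g\|_{L^{2}[-\pi,\pi]}$ (Cauchy--Schwarz), valid for every $g\in\BL{\pi}$, to the tails of the series.

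For item~(1) with an arbitrary entire $f$, fix $\rho>0$ and estimate, for $|z|\le\rho$, both factors. From $j_{n}(w)=\tfrac{w^{n}}{2^{n+1}n!}\int_{-1}^{1}(1-u^{2})^{n}\e^{\ii w u}\,\mathrm{d}u$ one gets $|j_{n}(\pi z)|\le \e^{\pi\rho}\,(\pi\rho)^{n}/(2^{n}n!)$. For the other factor, write $\K{n}$ as the constant-coefficient operator $(-\ii)^{n}P^{\scriptscriptstyle{L}}_{n}(\ii\,\mathrm{d}/\mathrm{d}t)$, insert Cauchy's formula $f^{(j)}(0)=\tfrac{j!}{2\pi\ii}\oint_{|\zeta|=R}\zeta^{-j-1}f(\zeta)\,\mathrm{d}\zeta$, and obtain for every fixed $R>0$ a bound of the shape $|\K{n}[f](0)|\le M(R)\sum_{j=0}^{n}j!\,|p^{\scriptscriptstyle{L}}_{n,j}|R^{-j}$, where $M(R)=\max_{|\zeta|=R}|f(\zeta)|$ and $p^{\scriptscriptstyle{L}}_{n,j}$ are the coefficients of $P^{\scriptscriptstyle{L}}_{n}$; crude bounds $|p^{\scriptscriptstyle{L}}_{n,j}|\le C^{n}$ together with $\sum_{j=0}^{n}j!\,R^{-j}\le C_{R}\,n\cdot n!\,R^{-n}$ for $R$ large then give $|\K{n}[f](0)|\le C_{R}M(R)\,n\sqrt{2n+1}\,n!\,(C/R)^{n}$. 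Multiplying the two estimates cancels the $n!$'s and yields $|\K{n}[f](0)\,\sqrt{2n+1}\,j_n(\pi z)|\le C_{R}M(R)\e^{\pi\rho}\,n\,(2n+1)\,(C\pi\rho/(2R))^{n}$ for $|z|\le\rho$; taking $R$ large enough makes this geometrically summable, so $S(z):=\sum_{n}\K{n}[f](0)\sqrt{2n+1}\,j_{n}(\pi z)$ converges absolutely and uniformly on compacta, defining an entire function. Differentiating the series term by term (Weierstrass), evaluating at $0$, and using $\sqrt{2n+1}\,\K{m}[j_{n}(\pi\,\cdot\,)](0)=\delta_{mn}$ gives $\K{m}[S](0)=\K{m}[f](0)$ for every $m$. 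Since $\K{m}$ equals a nonzero constant times $(\mathrm{d}/\mathrm{d}t)^{m}$ plus lower-order terms, the transformation from $(\K{0}[g](0),\dots,\K{m}[g](0))$ to the Taylor data $(g(0),\dots,g^{(m)}(0))$ is triangular and invertible, so $S$ and $f$ have the same Taylor expansion at $0$; being entire, $S\equiv f$, which is item~(1).

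The main obstacle is the growth estimate underlying item~(1): for a general, arbitrarily fast-growing entire $f$ one must control $\K{n}[f](0)$ well enough that it cannot defeat the super-exponential decay of $j_{n}(\pi z)$ at a fixed $z$. The key point is to use a Cauchy radius $R$ that does \emph{not} depend on $n$: this makes the $n!$ coming from differentiating $f$ appear and cancel against the $n!$ in the denominator of the spherical Bessel bound, leaving a genuinely geometric tail once $R$ is chosen large relative to $\rho$. The only mildly technical ingredient is an explicit (if wasteful) bound on the Legendre coefficients $p^{\scriptscriptstyle{L}}_{n,j}$; everything else reduces to the classical Bessel integral representation, the completeness of the Legendre system, and Plancherel's theorem.
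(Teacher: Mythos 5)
Your proposal is correct, but it reaches the statement by a self-contained, Legendre-specific route rather than by the paper's general machinery. In the paper this proposition is a special case of results for arbitrary (weakly) bounded moment functionals: part (2) follows from Proposition~\ref{unif-con} and Proposition~\ref{in-LL} (expand $\FT[f]$ in the orthonormal polynomials and estimate the tail by Cauchy--Schwarz under the inversion integral, with $\LL=L^2$ as in Example 1), and part (1) follows from Corollary~\ref{bounded-exp}, whose proof runs through recurrence-based estimates: Lemma~\ref{bounds} bounds the connection coefficients $\K{n}\left[t^k/k!\right](0)$ and $(\K{n}\circ\dd^k)[\mm](0)$ by induction on the three-term recurrence, Lemma~\ref{bounds1} gives $|\K{n}[\mm](z)|\lesssim |Kz|^{n}/n!^{\,1-p}$, and a comparison lemma converts the Taylor-coefficient growth of an entire $f$ into growth of $\K{n}[f](u)$. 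Your argument replaces these general lemmas by classical facts specific to this case: Poisson's integral representation gives $|j_n(\pi z)|\le e^{\pi\rho}(\pi\rho)^n/(2^n n!)$ on $|z|\le\rho$, while Cauchy's estimates at a fixed radius $R$ independent of $n$, combined with crude bounds on the Legendre coefficients, give $|\K{n}[f](0)|\lesssim n!\,(C/R)^n$ up to polynomial factors --- exactly the role played in the paper by \eqref{direct}, \eqref{mono-bound} and the fact that $\limsup_n|f^{(n)}(0)/n!|^{1/n}=0$ for entire $f$; the identification of the sum with $f$ via $\K{m}[S](0)=\K{m}[f](0)$ and triangularity of the change of basis is the same final step as in the proof of Proposition~\ref{expansionK}. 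For part (2) your Plancherel/Cauchy--Schwarz argument is the Legendre specialization of the paper's proof. What your route buys is elementary self-containedness (only classical identities for Legendre polynomials and spherical Bessel functions); what it loses is generality, since nothing of it survives beyond the Legendre case, whereas the paper's lemmas cover all weakly bounded functionals. The one step you assert without proof, the bound $|p_{n,j}|\le C^{n}$ on the Legendre coefficients, is true (e.g., from the explicit binomial formula for $P_n$) and harmless, but a complete write-up should include it.
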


We give analogues of the above theorems for very general
families of orthogonal polynomials. We also introduce some
nonseparable inner product spaces. In one of them, related to
the Hermite polynomials, functions $f_{\omega}(t)=\sin \omega
t$  for all $\omega>0$ have finite positive norms and for
\textbf{every} two distinct values $\omega_1\neq\omega_2$ the
corresponding functions $f_{\omega_1}(t)=\sin \omega_1 t$ and
$f_{\omega_2}(t)=\sin \omega_2 t$ are mutually orthogonal.
Related to the properties of such spaces, we also make the
following conjecture for families of orthonormal polynomials.
\begin{conjecture}
Let $P_n(\omega)$ be a family of symmetric positive definite
orthonormal polynomials corresponding to a moment distribution
function $a(\omega)$, \vspace*{-3mm}
\[
~\hspace*{20mm}\int_{-\infty}^{\infty}P_n(\omega)\;P_m(\omega)\;{\rm
d}a(\omega)=\delta(m-n),
\]
and let $\gamma_n>0$ be the recursion coefficients in the
corresponding three term recurrence relation for such
ortho\underline{\textbf{normal}} polynomials, i.e., such that
\[
P_{n+1}(\omega)=\frac{\omega}{\gamma_{n}}\,
P_{n}(\omega)-\frac{{\gamma_{n-1}}}{{\gamma_{n}}}\,
P_{n-1}(\omega).
\]
If $\gamma_n$ satisfy \ \
$\displaystyle{0<\lim_{n\rightarrow\infty}\frac{\gamma_n}{n^p}<\infty}$\
\  for some $0\leq p <1$, then
\[\displaystyle{0<\lim_{n\rightarrow\infty}\frac{1}{n^{1-p}}
\sum_{k=0}^{n-1}P_k(\omega)^2<\infty}
\]
for all $\omega$ in the support $sp(a)$  of  $a(\omega)$.
\end{conjecture}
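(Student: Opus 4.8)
The plan is to analyse the three term recurrence by transfer--matrix / Jacobi--matrix methods, recognising that $\sum_{k=0}^{n-1}P_k(\omega)^2$ is exactly the diagonal Christoffel--Darboux kernel, and to reduce the statement to precise oscillatory asymptotics for $P_n(\omega)$ with a fixed $\omega\in sp(a)$. Writing the recurrence as $\mathbf v_{n+1}=A_n\mathbf v_n$ with $\mathbf v_n=(P_n(\omega),P_{n-1}(\omega))^{\mathsf T}$ and
\[
A_n=\begin{pmatrix}\omega/\gamma_n & -\gamma_{n-1}/\gamma_n\\[1mm] 1 & 0\end{pmatrix},\qquad \det A_n=\frac{\gamma_{n-1}}{\gamma_n},
\]
one uses $\gamma_n\sim c\,n^{p}$ ($c>0$): when $p>0$ one has $\gamma_n\to\infty$, when $p=0$ one has $\gamma_n\to c$, and in either case there is $n_0=n_0(\omega)$ past which $4\gamma_n\gamma_{n-1}>\omega^2$, so $A_n$ has complex conjugate eigenvalues $\sqrt{\gamma_{n-1}/\gamma_n}\,\mathrm e^{\pm\mathrm i\theta_n}$ with $\cos\theta_n=\omega/(2\sqrt{\gamma_n\gamma_{n-1}})\to0$, i.e.\ $\theta_n\to\pi/2$. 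The telescoping $\prod_{k=n_0}^{n}\det A_k=\gamma_{n_0-1}/\gamma_n$ already identifies $\gamma_n^{-1/2}$ as the right amplitude scale.

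Diagonalising these ``frozen'' matrices and running a discrete Levinson / Benzaid--Lutz asymptotic integration should produce two independent solutions behaving like $\gamma_n^{-1/2}\mathrm e^{\pm\mathrm i\Theta_n}$, $\Theta_n=\sum_{k\le n}\theta_k$, hence a constant $\alpha(\omega)\in\mathbb C$ with
\[
P_n(\omega)=\gamma_n^{-1/2}\bigl(\alpha(\omega)\,\mathrm e^{\mathrm i\Theta_n}+\overline{\alpha(\omega)}\,\mathrm e^{-\mathrm i\Theta_n}+o(1)\bigr),
\]
so that $\gamma_n P_n(\omega)^2=2|\alpha(\omega)|^2\cos^2\!\bigl(\Theta_n+\arg\alpha(\omega)\bigr)+o(1)$. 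To see $\alpha(\omega)\neq0$, use that the Wronskian $W(\omega)=\gamma_n\bigl(P_{n+1}(\omega)\tilde P_n(\omega)-P_n(\omega)\tilde P_{n+1}(\omega)\bigr)$ of $P_\cdot(\omega)$ with the second--kind solution $\tilde P_\cdot(\omega)$ is a nonzero constant; since asymptotic integration gives $\|\mathbf v_n\|=O(\gamma_n^{-1/2})$ for \emph{every} solution (in particular for $\tilde P_\cdot(\omega)$), the elementary bound $|\det[\mathbf u\,|\,\mathbf w]|\le\|\mathbf u\|\,\|\mathbf w\|$ yields $\gamma_n\|\mathbf v_n\|^2\ge \mathrm{const}\cdot|W(\omega)|^2>0$, so no solution decays below the envelope and hence $2|\alpha(\omega)|^2\ge\mathrm{const}\cdot|W(\omega)|^2>0$. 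This is exactly where $\omega\in sp(a)$ enters: it guarantees we are eventually in the elliptic regime (which contains all of $\mathbb R$ when $p>0$, and the interior of $sp(a)$ when $p=0$).

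To finish, substitute $\gamma_k P_k(\omega)^2=|\alpha(\omega)|^2+|\alpha(\omega)|^2\cos\bigl(2\Theta_k+2\arg\alpha(\omega)\bigr)+o(1)$ and divide by $\gamma_k\sim ck^p$:
\[
\sum_{k=0}^{n-1}P_k(\omega)^2=|\alpha(\omega)|^2\sum_{k=0}^{n-1}\frac1{\gamma_k}\;+\;|\alpha(\omega)|^2\!\sum_{k<n}\frac{\cos(2\Theta_k+2\arg\alpha(\omega))}{\gamma_k}\;+\;\sum_{k<n}\frac{o(1)}{\gamma_k}.
\]
The first sum is asymptotic to $n^{1-p}/(c(1-p))$. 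In the middle sum there is no resonance, since $\mathrm e^{2\mathrm i\theta_k}$ is eventually bounded away from $1$ (its limit being $\mathrm e^{2\mathrm i\theta_\infty}\neq1$); writing $\mathrm e^{2\mathrm i\Theta_k}=\mathrm e^{2\mathrm i\Theta_{k-1}}\mathrm e^{2\mathrm i\theta_k}$ and hence $\mathrm e^{2\mathrm i\Theta_{k-1}}=(\mathrm e^{2\mathrm i\Theta_k}-\mathrm e^{2\mathrm i\Theta_{k-1}})/(\mathrm e^{2\mathrm i\theta_k}-1)$, one sums by parts against the bounded sequence $\mathrm e^{2\mathrm i\Theta_k}$ with coefficients $\gamma_k^{-1}\mathrm e^{2\mathrm i\theta_k}(\mathrm e^{2\mathrm i\theta_k}-1)^{-1}$, which are bounded and eventually of bounded variation (because $\theta_k$ and $\gamma_k$ are eventually monotone), so the middle sum is $O(1)=o(n^{1-p})$; and the last sum is $o\bigl(\sum_{k<n}\gamma_k^{-1}\bigr)=o(n^{1-p})$ since $\sum\gamma_k^{-1}\to\infty$. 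Therefore $\displaystyle\lim_{n\to\infty}n^{-(1-p)}\sum_{k=0}^{n-1}P_k(\omega)^2=|\alpha(\omega)|^2/(c(1-p))$, which lies in $(0,\infty)$ by the previous paragraph.

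The main obstacle is the asymptotic--integration step and its sensitivity to how much regularity $\gamma_n$ is granted: the bare hypothesis $0<\lim\gamma_n/n^p<\infty$ controls the \emph{size} of $\gamma_n$ but not its second differences, whereas a Levinson--type theorem --- and the exclusion of embedded eigenvalues, which would ruin positivity --- ordinarily needs a condition such as bounded variation of $(\gamma_{n+1}-\gamma_n)$ or $\sum_n|\gamma_{n+1}-2\gamma_n+\gamma_{n-1}|<\infty$. The realistic route is thus first to prove the statement under such an auxiliary regularity hypothesis --- it holds in the classical cases: for the generalized Hermite / Freud weights $\mathrm e^{-|\omega|^{\beta}}$ one has $\gamma_n\sim c_\beta\,n^{1/\beta}$ with $\beta=1/p>1$, and the conclusion is then a known Christoffel--function asymptotic of Levin--Lubinsky type --- and then to try to remove it, perhaps by deriving the Ces\`aro statement directly from ratio--asymptotic (M\'at\'e--Nevai--Totik) arguments adapted to the power growth $\gamma_n\asymp n^p$ rather than from pointwise asymptotics of $P_n$; I expect the latter to be the genuinely hard and open part. (For $p=0$ the statement should in any case be read at interior points of $sp(a)$: at an endpoint $\pm2\lim\gamma_n$ the frozen matrix is parabolic and $\sum_{k<n}P_k(\omega)^2$ grows like $n^3$, not like $n$.)
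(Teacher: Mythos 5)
You should first note what the paper actually does with this statement: nothing is proved. It is stated as a \emph{conjecture}, supported only by numerical experiments with $\gamma_n=n^p$, and a footnote in Section~\ref{nonsep} records that even the special case $p=0$ is the (still open) Nevai--Totik conjecture. So there is no paper proof to compare against, and any complete argument you gave would be a new result, not a reconstruction.

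Your plan is the natural one (transfer matrices, $\gamma_n^{-1/2}$ amplitude normalization, discrete Levinson asymptotics $P_n(\omega)\approx\gamma_n^{-1/2}\bigl(\alpha e^{\mathrm i\Theta_n}+\overline{\alpha}e^{-\mathrm i\Theta_n}\bigr)$, Wronskian lower bound for $\alpha\neq 0$, then Ces\`aro summation with summation by parts killing the oscillatory term), and your own closing paragraph correctly locates the gap: it is genuine and it is the whole difficulty. A Benzaid--Lutz/Levinson theorem needs the perturbation of the frozen matrices to be of bounded variation (or summable second differences of $\gamma_n$); the hypothesis $0<\lim\gamma_n/n^p<\infty$ controls only the size of $\gamma_n$ and admits oscillatory perturbations of Wigner--von Neumann type, under which the pointwise asymptotics you rely on can fail, subordinate or even $\ell^2$ solutions can appear at points of the support (an embedded mass point $\omega_0$ would give $\sum_k P_k(\omega_0)^2<\infty$ and hence Ces\`aro limit $0$, destroying the lower bound), and your step ``every solution is $O(\gamma_n^{-1/2})$'' is exactly what cannot be deduced from size bounds alone. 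Your endpoint remark for $p=0$ is also a real tension with the literal quantifier ``all $\omega\in sp(a)$'' (and mass points outside the essential spectrum raise the same issue). So what you have is a plausible conditional theorem under added regularity of $\gamma_n$ --- consistent with known Christoffel-function asymptotics for Freud-type weights --- not a proof of the conjecture as stated; since the paper itself leaves the statement open, that is an honest and accurate place to land, but it should be presented as such.
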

Numerical tests with $\gamma_n=n^p$ for many $p\in[0,1)$
indicate that the conjecture is true.

\section{Motivation}
Signal processing mostly deals with the signals which can be represented by
continuous $L^2$ functions whose Fourier transform is supported within
$[-\pi,\pi]$; these functions form \textit{the space  $\BL{\pi}$ of $\pi$ band limited
signals of finite energy}. Foundations of classical digital signal processing
rest on the Whittaker--Kotel'nikov--Nyquist--Shannon Sampling
Theorem (for brevity the Shannon Theorem):
every signal $f\in\BL{\pi}$ can be represented using its samples
at integers and \textit{the cardinal
sine function} $\sinc t =\sin \pi t/\pi t$, as
\begin{equation}\label{nexp}
f(t)=\sum_{n=-\infty}^{\infty}f(n)\; \sinc (t-n).
\end{equation}

Such signal representation is of \textit{global nature},
because it involves samples of the signal at integers of
arbitrarily large absolute value. In fact, since for a fixed
$t$ the values of $\sinc (t-n)$ decrease slowly as $|n|$ grows,
the truncations of the above series do not provide satisfactory
local signal approximations.

On the other hand, since every signal $f\in\BL{\pi}$ is a restriction to $\Rset$
of an entire function, it can also be represented by the Taylor series,
\begin{equation}\label{taylor}
f(t)=\sum_{n=0}^{\infty}f^{(n)}(0)\;\frac{t^n}{n!}.
\end{equation}
Such a series converges uniformly on every finite interval, and
its truncations provide good local signal approximations. Since
the values of the derivatives $f^{(n)}(0)$ are determined by
the values of the signal in an arbitrarily small neighborhood
of zero, the Taylor expansion is of \textit{local nature}. In
this sense, the Shannon and the Taylor expansions are
complementary.

However, unlike the Shannon expansion, the Taylor expansion has
found very limited use in signal processing, due to several
problems associated with its application to empirically sampled
signals.
\begin{enumerate}[(I)]
\item Numerical evaluation of higher order derivatives
    of a function given by its samples is very noise
    sensitive. In general, one is cautioned against numerical
    differentiation:
    \begin{quote}``\ldots
    numerical differentiation should be avoided whenever
    possible, particularly when the data are empirical and
    subject to appreciable errors of
    observation''\cite{Hil}.\end{quote}

\item The Taylor expansion of  a signal $f\in\BL{\pi}$
    converges non-uniformly on $\Rset$; its truncations
    have rapid error accumulation when moving away from the center
    of expansion and are unbounded.

\item Since the Shannon expansion of a signal $f\in\BL{\pi}$
    converges to $f$ in $\BL{\pi}$, the action of a
    continuous linear shift invariant operator (in signal
    processing terminology, a \textit{filter})  $A$
    can be expressed using samples of
    $f$ and the \textit{impulse response\/} $A[\sinc\!]$ of
    $A$:
\begin{equation}\label{filter-ny}
A[f](t)=\sum_{n=-\infty}^{\infty}f(n)\;A[\sinc\!](t-n).
\end{equation}
In contrast, the polynomials obtained by truncating the
Taylor series do not belong to $\BL{\pi}$ and nothing
similar to \eqref{filter-ny} is true of the Taylor
expansion.
\end{enumerate}

Chromatic derivatives were introduced in \cite{IG0} to overcome
problem (I) above; the chromatic approximations were introduced
in \cite{IG00} to obtain local approximations of band-limited
signals which do not suffer from problems (II) and (III).

\subsection{Numerical differentiation of band limited signals}

To understand the problem of numerical differentiation of
band-limited signals, we consider an arbitrary $f\in \BL{\pi}$
and its Fourier transform $\widehat{f}(\omega)$; then
\[f^{(n)}(t)=\frac{1}{2\pi}\int_{-\pi}^{\pi}(\ii\omega)^n
\widehat{f}(\omega){\e}^{\ii\omega t}d\omega.\]
Figure~\ref{derivatives} (left) shows, for $n= 15$ to $n=18$,
the plots of $(\omega/\pi)^{n}$, which are, save a factor of
$\ii^n$, the symbols, or, in signal processing terminology, the
\textit{transfer functions} of the normalized derivatives
$1/\pi^n \; {\mathrm d}^n/{\mathrm d }t^n$. These plots reveal
why there can be no practical method for any reasonable
approximation of derivatives of higher orders. Multiplication
of the Fourier transform of a signal by the transfer function
of a normalized derivative of higher order obliterates the
Fourier transform of the signal, leaving only its edges, which
in practice contain mostly noise. Moreover, the graphs of the
transfer functions of the normalized derivatives of high orders
and of the same parity cluster so tightly together that they
are essentially indistinguishable; see Figure~\ref{derivatives}
(left).\footnote{If the derivatives are not normalized, their
values can be very large and are again determined essentially
by the noise present at the edge of the bandwidth of the
signal.}

However, contrary to a common belief, these facts \textit{do
not} preclude numerical evaluation of all differential
operators of higher orders, but only indicate that, from a
numerical perspective, the set of the derivatives $\{f,
f^\prime, f^{\prime\prime},\ldots\}$ is a very poor base of the
vector space of linear differential operators with real
coefficients. We now show how to obtain a base for this space
consisting of numerically robust linear differential operators.

\subsection{Chromatic derivatives}
Let polynomials $P^{\scriptscriptstyle{L}}_n(\omega)$ be
obtained by normalizing and scaling the Legendre polynomials,
so that
\begin{equation*}
\frac{1}{2\pi}\int_{-\pi}^{\pi}P^{\scriptscriptstyle{L}}_{\scriptstyle{n}}
(\omega)\;P^{\scriptscriptstyle{L}}_{\scriptstyle{m}}(\omega){\rm d}\omega=\delta(m-n).
\end{equation*}
We define operator polynomials\footnote{Thus, obtaining $\K{n}_t$
involves replacing $\omega^k$ in
$P^{\scriptscriptstyle{L}}_{\scriptstyle{n}}(\omega)$ with
$\ii^k\;{\rm d}^k/{\rm d} t^k$ for all
$k\leq n$. If $\K{n}_t$ is applied to a function of a single
variable, we drop index $t$ in $\K{n}_t$.}
\begin{equation*}\label{dop} \K{n}_t=(-{\ii})^{n}
P^{\scriptscriptstyle{L}}_{\scriptstyle{n}}\left(\ii\;\frac{{\rm d}}{{\rm d} t}\right).
\end{equation*}
Since polynomials
$P^{\scriptscriptstyle{L}}_{\scriptstyle{n}}(\omega)$ contain
only powers of the same parity as $n$, operators $\K{n}$ have
real coefficients, and it is easy to verify that
\begin{equation*}
\K{n}_t[{\e}^{\ii\omega t}]=
{\ii}^nP^{\scriptscriptstyle{L}}_{\scriptstyle{n}}(\omega)\,{\e}^{\ii\omega t}.
\end{equation*}
Consequently, for $f\in\BL{\pi}$,
\[
\K{n}[f](t)=\frac{1}{2\pi}\int_{-\pi}^{\pi}{\ii}^n
P^{\scriptscriptstyle{L}}_{\scriptstyle{n}}(\omega)\widehat{f}(\omega)\;
{\e}^{\ii\omega t}{\rm d}\omega.
\]
In particular, one can show that
\begin{equation}\label{cdersinc}
\K{n}[\sinc](t)=(-1)^n\;\sqrt{2n+1}\;{\mathrm j}_{n}(\pi t),
\end{equation}
where ${\mathrm j}_n(x)$ is the spherical Bessel function of
the first kind of order $n$. Figure~\ref{derivatives} (right)
shows the plots of
$P^{\scriptscriptstyle{L}}_{\scriptstyle{n}}(\omega)$, for $n=
15$ to $n=18$, which are the transfer functions (again save a
factor of $\ii^n$) of the corresponding operators $\K{n}$.
Unlike the transfer functions of the (normalized) derivatives
$1/\pi^n\;{\rm d}^n/{\rm d} t^n$, the transfer functions of the
chromatic derivatives $\K{n}$ form a family of well separated,
interleaved and increasingly refined comb filters. Instead of
obliterating, such operators encode the features of the Fourier
transform of the signal (in signal processing terminology, the
\textit{spectral features} of the signal). For this reason, we
call operators $\K{n}$ the {\em chromatic derivatives\/}
associated with the Legendre polynomials.

\begin{figure}
    \begin{center}
     \includegraphics[width=4.7in]{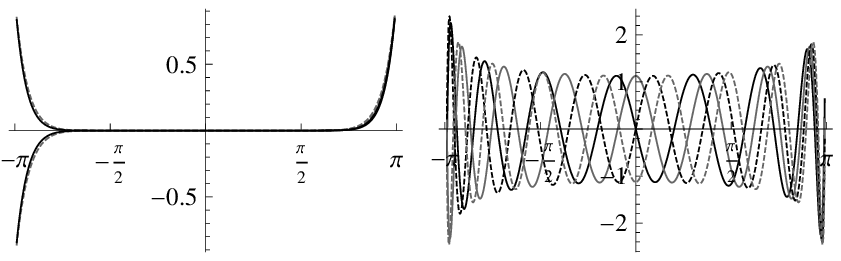}
      \caption{Graphs of $\left(\frac{\omega}{\pi}\right)^n$ (left) and of
      $P^{\scriptscriptstyle{L}}_{\scriptstyle{n}}(\omega)$
      (right) for $n=15-18$.}
      \label{derivatives}
    \end{center}
\end{figure}
Chromatic derivatives can be accurately and robustly evaluated
from samples of the signal taken at a small multiple of the
usual Nyquist rate. Figure~\ref{remezPlainLegendre15} (left)
shows the plots of the transfer function of a
\textit{transversal filter} given by
$\mathcal{T}_{15}[f](t)=\sum_{k=-64}^{64}c_{k}\,f(t+k/2)$
(gray), used to approximate the chromatic derivative
$\K{15}[f](t)$, and the transfer function of $\K{15}$ (black).
The coefficients $c_k$ of the filter were obtained using the
Remez exchange method \cite{Opp}, and satisfy $|c_k|< 0.2,
(-64\leq k\leq 64$). The filter has 129 taps, spaced two taps
per Nyquist rate interval, i.e., at a distance of $1/2$. Thus,
the transfer function of the corresponding ideal filter
$\K{15}$ is
$P^{\scriptscriptstyle{L}}_{\scriptstyle{15}}(2\omega)$ for
$|\omega|\leq \pi/2$, and zero outside this interval. The
pass-band of the actual transversal filter is 90\% of the
bandwidth $[-\pi/2,\pi/2]$. Outside the transition regions
$[-11\pi/20,-9\pi/20]$ and $[9\pi/20,11\pi/20]$ the error of
approximation is less than $1.3\times 10^{-4}$.

Implementations of filters for
operators $\K{n}$ of orders $0\leq n\leq 24$ have been tested
in practice and proved to be both accurate and noise robust, as
expected from the above considerations.
\begin{figure}
    \begin{center}
      \includegraphics[width=5in]{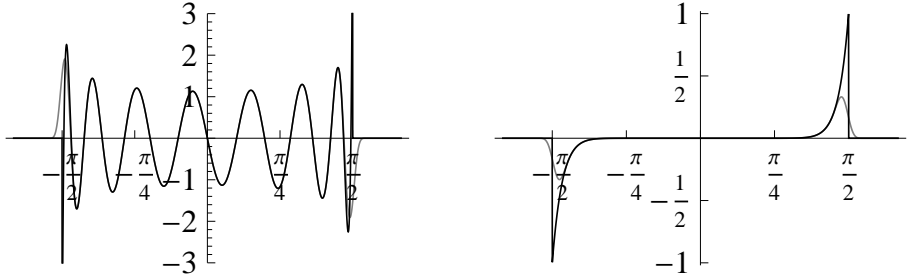}
      \caption{Transfer functions of
$\K{15}$ (left, black) and $d^{15}/dt^{15}$ (right, black) and
of their transversal filter approximations (gray).}
      \label{remezPlainLegendre15}
    \end{center}
\end{figure}

For comparison, Figure~\ref{remezPlainLegendre15} (right) shows
the transfer function of a transversal filter obtained by the
same procedure and with the same bandwidth constraints, which
approximates the (normalized) ``standard" derivative
$(2/\pi)^{15}\; {\rm d}^{15}/{\rm d} t^{15}$  (gray) and the
transfer function of the ideal filter (black). The figure
clearly indicates that such a transversal filter is of no
practical use.

Note that \eqref{nexp} and \eqref{cdersinc} imply that
\begin{equation}\label{no}
\K{k}[f](t)=\sum_{n=-\infty}^{\infty}f(n)\;\K{k}[\sinc\!](t-n)
=\sum_{n=-\infty}^{\infty}f(n)\;(-1)^k\;\sqrt{2k+1}\;{\mathrm j}_k(\pi(t-n)).
\end{equation}
However, in practice, the values of $\K{k}[f](t)$, especially
for larger values of $k$, \textit{cannot} be obtained from the
Nyquist rate samples using truncations of \eqref{no}. This is
due to the fact that functions $\K{k}[\sinc\!](t-n)$ decay very
slowly as $|n|$ grows; see Figure~\ref{i15} (left). Thus, to
achieve any accuracy, such a truncation would need to contain
an extremely large number of terms.
\begin{figure}
    \begin{center}
      \includegraphics[width=5in]{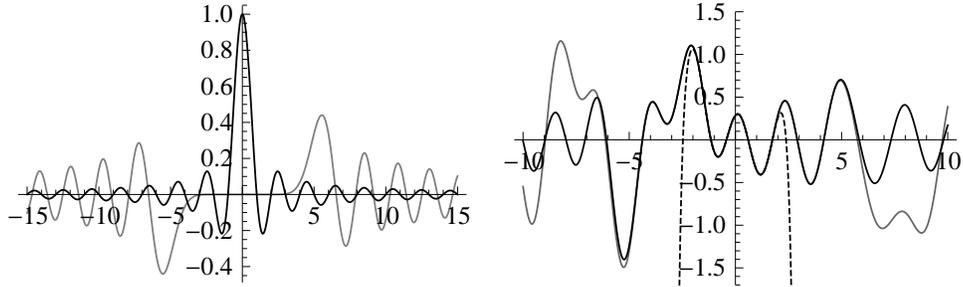}
      \caption{\textsc{left}: Oscillatory behavior of $\sinc(t)$
      (black), and $\K{15}[\sinc](t)$ (gray);
      \textsc{right}: A signal $f\in\BL{\pi}$ (gray) and its
      chromatic and Taylor approximations (black, dashed)}
      \label{i15}
    \end{center}
\end{figure}
On the other hand, this also means that signal information
present in the values of the chromatic derivatives of a signal
obtained by sampling an appropriate filterbank  at an instant
$t$ is \textit{not redundant} with information present in the
Nyquist rate samples of the signal in any reasonably sized
window around $t$, which is a fact suggesting that chromatic
derivatives could enhance standard signal processing methods
operating on Nyquist rate samples.

\subsection{Chromatic expansions}
The above shows that numerical evaluation of the chromatic
derivatives associated with the Legendre polynomials does not
suffer problems which precludes numerical evaluation of the
``standard'' derivatives of higher orders. On the other hand,
the chromatic expansions, defined in Proposition~\ref{app}
below, were conceived as a solution to problems associated with
the use of the Taylor expansion.\footnote{Propositions stated
in this Introduction are special cases of general propositions
proved in subsequent sections.}

\begin{proposition}\label{app}
Let $\K{n}$ be the chromatic derivatives associated with the
Legendre polynomials, let ${\mathrm j}_n$ be the spherical
Bessel function of the first kind of order $n$, and let $f$  be
an arbitrary entire function; then for all $z,u\in\Cset$,
\begin{eqnarray}
f(z)&=&\sum_{n=0}^{\infty}\;K^n[f](u)\;  K^n_u[\sinc (z-u)]\label{CEL}\\
&=&\sum_{n=0}^{\infty}(-1)^n\;K^n[f](u)\;  K^n[\sinc ](z-u)\label{CEL1}\\
&=&\sum_{n=0}^{\infty}K^n[f](u)\;\sqrt{2n+1}\;  {\mathrm j}_n (\pi(z-u))
\end{eqnarray}
If  $f\in\BL{\pi}$, then the series converges uniformly on
$\Rset$ and in the space $\BL{\pi}$.
\end{proposition}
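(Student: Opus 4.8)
The plan is: (i) reduce the three displayed series to one another; (ii) deduce the resulting single identity for arbitrary entire $f$ from the Taylor-type proposition above (with spherical Bessel functions in place of monomials); (iii) obtain the convergence statements for $f\in\BL{\pi}$ by identifying the expansion as a Fourier series in an explicit orthonormal basis of $\BL{\pi}$. For (i): since $\K{n}$ has constant coefficients it commutes with translations, and since $P^{\scriptscriptstyle L}_n$ contains only powers of parity $n$, the function $\K{n}[\sinc t]$ has parity $n$ in $t$; combining these two facts with \eqref{cdersinc} yields $\K{n}_u[\sinc (z-u)]=\K{n}[\sinc\!](u-z)=(-1)^n\K{n}[\sinc\!](z-u)=\sqrt{2n+1}\,{\mathrm j}_n(\pi(z-u))$. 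Substituting into \eqref{CEL} turns its $n$-th term successively into the $n$-th terms of \eqref{CEL1} and of the third series, so it suffices to prove \eqref{CEL1}.

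\textbf{Step 2 (the pointwise identity for entire $f$).} Fix $u\in\Cset$ and put $g(z)=f(z+u)$, which is entire. Applying the Taylor-type proposition to $g$ at the origin, and using ${\mathrm j}_0(\pi z)=\sinc z$, gives $g(z)=\sum_{n\geq 0}(-1)^n\K{n}[g](0)\,\K{n}[\sinc\!](z)$ for all $z\in\Cset$. Because $\K{n}$ commutes with the translation $z\mapsto z+u$, we have $\K{n}[g](0)=\K{n}[f](u)$, hence $f(z+u)=\sum_{n\geq 0}(-1)^n\K{n}[f](u)\,\K{n}[\sinc\!](z)$; replacing $z$ by $z-u$ gives \eqref{CEL1}, and with Step 1 the entire chain of equalities in the statement, for every entire $f$.

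\textbf{Step 3 (convergence for $f\in\BL{\pi}$).} Write $\phi_n^u(t)=\K{n}_u[\sinc (t-u)]$. Using the integral representation of $\K{n}$, the fact that the Fourier transform of $\sinc t$ is $\chi_{[-\pi,\pi]}$, and $\widehat{h(\cdot-u)}(\omega)=\e^{-\ii\omega u}\widehat{h}(\omega)$, one computes $\widehat{\phi_n^u}(\omega)=(-\ii)^n P^{\scriptscriptstyle L}_n(\omega)\,\e^{-\ii\omega u}\chi_{[-\pi,\pi]}(\omega)$. The chosen normalization makes $\{P^{\scriptscriptstyle L}_n\,\chi_{[-\pi,\pi]}\}_{n\geq 0}$ an orthonormal basis of $L^2[-\pi,\pi]$ with weight $1/2\pi$; multiplication by the unimodular factor $\e^{-\ii\omega u}$ and by the nonzero constants $(-\ii)^n$ preserves this property, so through the Fourier--Plancherel isometry $\BL{\pi}\to L^2[-\pi,\pi]$ the family $\{\phi_n^u\}_{n\geq 0}$ is an orthonormal basis of $\BL{\pi}$. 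Computing the coefficient $\langle f,\phi_n^u\rangle$ against $\widehat{\phi_n^u}$ and comparing with the integral formula for $\K{n}[f](u)$ gives $\langle f,\phi_n^u\rangle=\K{n}[f](u)$; therefore $f=\sum_{n\geq 0}\K{n}[f](u)\,\phi_n^u$ in the norm of $\BL{\pi}$, which is \eqref{CEL}. Finally, for any $g\in\BL{\pi}$ the inversion integral and the Cauchy--Schwarz inequality give $|g(t)|\le\frac{1}{\sqrt{2\pi}}\|\widehat{g}\|_{L^2[-\pi,\pi]}=\|g\|_{L^2(\Rset)}$ for every $t\in\Rset$; applying this to $g=f-\sum_{n\leq N}\K{n}[f](u)\,\phi_n^u$ upgrades the $\BL{\pi}$-convergence to uniform convergence on $\Rset$, consistent with the pointwise statement of Step 2.

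\textbf{Main obstacle.} Granting the Taylor-type proposition, Steps 1--2 are essentially bookkeeping, and the real work lies in Step 3: one must compute $\widehat{\phi_n^u}$ correctly and track the powers of $\ii$ so that the $\phi_n^u$ are genuinely orthonormal, and then check that the abstract Fourier series of $f$ in this basis is \emph{termwise} the chromatic series produced in Step 2, so that the two modes of convergence refer to the same object. A sign error in the parity/translation manipulations of Step 1 would already destroy the equivalence of the three displayed series, so that step, although routine, requires care.
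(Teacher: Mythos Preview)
Your Step~3 is correct and is, in essence, exactly how the paper proves the $\BL{\pi}$ part: in the general framework the paper shows (Propositions~\ref{unif-con} and~\ref{in-LL}) that $\{\K{n}_u[\mm(t-u)]\}_{n\ge 0}$ is a complete orthonormal system in~$\LL$, with $\Mscal{f}{\K{n}_u[\mm(t-u)]}=\K{n}[f](u)$, and that the expansion converges both in~$\LL$ and uniformly on~$\Rset$ via the estimate~\eqref{convergence}. Your direct computation of $\widehat{\phi_n^u}$ and the Plancherel argument is simply the Legendre specialization of that, and your passage from $L^2$ to uniform convergence via $|g(t)|\le\|g\|_{L^2}$ is the same mechanism as~\eqref{convergence}.

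The gap is in Step~2. Reducing from an arbitrary centre $u$ to $u=0$ by translation is fine, but you then invoke Proposition~1.3 (``the Taylor-type proposition above'') as a black box. In the paper that proposition has exactly the same status as the one you are trying to prove: both are announced in the introduction and both are deduced later as special cases of Corollary~\ref{bounded-exp}. The actual content you are missing is the convergence of $\sum_n(-1)^n\K{n}[f](0)\,\K{n}[\sinc](z)$ for an \emph{arbitrary} entire $f$, and the paper obtains this not from any Fourier argument (which only handles $f\in\BL{\pi}$) but from growth estimates: Lemma~\ref{bounds} bounds the change-of-basis coefficients using only the boundedness of the Legendre recursion constants $\gamma_n$, Lemma~\ref{bounds1} turns this into $|\K{n}[\sinc](z)|\le C|Kz|^n\e^{|Kz|}/n!$, and Proposition~\ref{expansionK}/Corollary~\ref{expansionD} then give local uniform convergence whenever $|f^{(n)}(u)/n!|^{1/n}\to 0$, i.e.\ for every entire $f$. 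Your Step~3 cannot substitute for this, because the orthonormal-basis argument needs $\widehat{f}\in L^2[-\pi,\pi]$ and says nothing about a general entire function. So as written, your proposal proves the last sentence of the proposition but not the first display for arbitrary entire~$f$; to close the gap you would need to supply the $\K{n}[\sinc]$ estimate (or an equivalent bound on spherical Bessel functions) that the paper extracts from the three-term recurrence.
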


The series in \eqref{CEL} is called \textit{the chromatic
expansion of $f$ associated with the Legendre polynomials\/}; a
truncation of this series is called a \textit{chromatic
approximation\/} of $f$. As the Taylor approximation, a
chromatic approximation is also a local approximation; its
coefficients are the values of differential operators
$\K{m}[f](u)$ at a single instant $u$, and for all $k\leq n$,
\begin{equation*}
f^{(k)}(u)=\frac{{\rm d}^k}{{\rm d}t^k}
\left[\sum_{m=0}^{n}\K{m}[f](u) \;\K{m}_u[\sinc(t-u)]\right]_{t=u}.
\end{equation*}

Figure~\ref{i15} (right) compares the behavior of the chromatic
approximation (black) of a signal $f\in\BL{\pi}$ (gray) with
the behavior of its Taylor approximation (dashed).
Both approximations are of order 16. The signal $f(t)$
is defined using the Shannon expansion, with
samples $\{f(n)\ : \ |f(n)|<1,\  -32\leq n\leq 32\}$ which were
randomly generated. The plot
reveals that, when approximating a signal $f\in\BL{\pi}$, a
chromatic approximation has a much gentler error accumulation
when moving away from the point of expansion than the Taylor
approximation of the same order.

Unlike the monomials which appear in the Taylor formula,
functions $\K{n}[\sinc\!](t)=(-1)^n\;\sqrt{2n+1}\;{\mathrm
j}_n(\pi t)$ belong to $\BL{\pi}$ and satisfy
$|\K{n}[\sinc\!](t)|\leq 1$ for all $t\in\Rset$. Consequently,
the chromatic approximations also belong to $\BL{\pi}$ and are
bounded on $\Rset$.

Since by Proposition~\ref{app} the chromatic approximation of a
signal $f\in\BL{\pi}$ converges to $f$ in $\BL{\pi}$, if $A$ is
a filter, then $A$ commutes with the differential operators
$\K{n}$ and thus for every $f\in \BL{\pi}$,
\begin{equation}\label{filter-ce}
A[f](t)=\sum_{n=0}^{\infty}(-1)^n\;\K{n}[f](u)\;
\K{n}[A[\,\sinc ]](t-u).
\end{equation}
A comparison of \eqref{filter-ce} with \eqref{filter-ny}
provides further evidence that, while local just like the
Taylor expansion, the chromatic expansion associated with the
Legendre polynomials possesses the features that make the
Shannon expansion so useful in signal processing. This,
together with numerical robustness of chromatic derivatives,
makes chromatic approximations applicable in fields involving
empirically sampled data, such as digital signal and image
processing.

\subsection{A local definition of the scalar product in \BL{\pi}}
Proposition~\ref{fun} below demonstrates another
remarkable property of the chromatic derivatives associated with
the Legendre polynomials.

\begin{proposition}\label{fun}
Let $f:\Rset\rightarrow\Rset$ be a restriction of an arbitrary
entire function; then the following are equivalent:
\begin{enumerate}[(a)]
\item $\sum_{n=0}^{\infty}\K{n}[f](0)^2 < 0$;
\item for all $t\in\Rset$ the sum $\sum_{n=0}^{\infty}\K{n}[f](t)^2$
converges, and its values are independent of $t\in\Rset$;
\item $f\in\BL{\pi}$.
\end{enumerate}
\end{proposition}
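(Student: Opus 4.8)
The plan is to transport everything to the Fourier side, where $\BL{\pi}$ becomes $L^2([-\pi,\pi])$ (up to the factor $1/2\pi$) and the normalized Legendre polynomials $P^{\scriptscriptstyle{L}}_n$ are an orthonormal basis of $L^2([-\pi,\pi],\frac{1}{2\pi}{\rm d}\omega)$. Put $\phi_n(t):=\sqrt{2n+1}\,{\mathrm j}_n(\pi t)$, which by \eqref{cdersinc} equals $(-1)^n\K{n}[\sinc\!](t)$; from the integral formula for $\K{n}$ on $\BL{\pi}$ one gets $\widehat{\phi_n}(\omega)=(-\ii)^nP^{\scriptscriptstyle{L}}_n(\omega)\mathbf{1}_{[-\pi,\pi]}(\omega)$, so $\{\phi_n\}_{n\in\Nset}$ is an orthonormal system in $L^2(\Rset)$ whose closed span is exactly $\BL{\pi}$, and $\K{m}[h](0)=\langle h,\phi_m\rangle_{L^2(\Rset)}$ for every $h\in\BL{\pi}$. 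For $f\in\BL{\pi}$, the function $\omega\mapsto\widehat f(\omega)\e^{\ii\omega t}$ lies in $L^2([-\pi,\pi])$ with $n$-th Legendre coefficient $(-\ii)^n\K{n}[f](t)$; since $f$ is real-valued, each $\K{n}[f](t)$ is real, so Parseval together with Plancherel yields
\[
\sum_{n=0}^{\infty}\K{n}[f](t)^2=\frac{1}{2\pi}\int_{-\pi}^{\pi}|\widehat f(\omega)|^2\,{\rm d}\omega=\int_{-\infty}^{\infty}f(x)^2\,{\rm d}x
\]
for all $t\in\Rset$. This gives (c)$\Rightarrow$(b), and (b)$\Rightarrow$(a) is immediate at $t=0$.

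The real content is (a)$\Rightarrow$(c). Suppose $f$ is the restriction of an entire function and $\sum_n\K{n}[f](0)^2<\infty$; set $a_n:=\K{n}[f](0)$. Because $\{\phi_n\}$ is orthonormal in $L^2(\Rset)$ and $(a_n)\in\ell^2$, the series $g:=\sum_{n=0}^{\infty}a_n\phi_n$ converges in $L^2(\Rset)$, and since each $\widehat{\phi_n}$ is supported in $[-\pi,\pi]$ — a condition preserved under $L^2$ limits — the limit $g$ has Fourier transform supported in $[-\pi,\pi]$; by Paley--Wiener it has an entire (hence continuous) representative, so $g\in\BL{\pi}$. Commuting the bounded functional $\langle\cdot,\phi_m\rangle_{L^2(\Rset)}=\K{m}[\cdot](0)$ with the $L^2$-convergent sum gives $\K{m}[g](0)=a_m=\K{m}[f](0)$ for every $m$.

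It remains to upgrade this to $f=g$. Consider $h:=f-g$, which is entire and satisfies $\K{m}[h](0)=0$ for all $m\geq 0$. Since $P^{\scriptscriptstyle{L}}_n$ has degree exactly $n$, for each $N$ the operators $\K{0},\dots,\K{N}$ are a triangular, hence invertible, linear combination of $1,\frac{{\rm d}}{{\rm d}t},\dots,\frac{{\rm d}^N}{{\rm d}t^N}$; therefore $h^{(m)}(0)=0$ for every $m$, and the Taylor expansion of the entire function $h$ forces $h\equiv 0$. Hence $f=g\in\BL{\pi}$. (Alternatively, Proposition~\ref{app} already asserts $f(z)=\sum_n a_n\phi_n(z)$ pointwise, so the partial sums converge everywhere to $f$ and, along a subsequence, almost everywhere to $g$; continuity of $f$ and $g$ then forces $f=g$.)

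The two Fourier-transform identities and the triangular change of basis between $\{\K{n}\}$ and $\{{\rm d}^n/{\rm d}t^n\}$ are routine. The one delicate point, and the place where the implication really has teeth, is in (a)$\Rightarrow$(c): one must know that the purely Hilbert-space object $g=\sum_n a_n\phi_n$ genuinely belongs to $\BL{\pi}$ with a continuous representative, and that the evaluation functionals $h\mapsto\K{m}[h](0)$ may be pulled inside the $L^2$-convergent series. Both follow once $\BL{\pi}$ is viewed, via Paley--Wiener, as a closed subspace of $L^2(\Rset)$ consisting of restrictions of entire functions of exponential type $\le\pi$, on which $\K{m}[\cdot](0)=\langle\cdot,\phi_m\rangle$; the rest is the identity theorem for entire functions.
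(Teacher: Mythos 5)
Your proof is correct and takes essentially the same route as the paper's: the paper establishes the general version on the spectral side, where Corollary~\ref{constant}/Corollary~\ref{independent} give (c)$\Rightarrow$(b) via Parseval and Proposition~\ref{bijection} gives (a)$\Rightarrow$(c) via completeness of the orthonormal polynomials in $\LT$, and your argument is exactly this specialized to the Legendre case (with $\phi_n=(-1)^n\K{n}[\sinc]$ playing the role of the transformed basis). The one step you make explicit that the paper leaves implicit in Proposition~\ref{bijection} is the final identification $f=g$ from equality of all chromatic derivatives at $0$, via the triangular change of basis between $\{\K{n}\}$ and $\{{\rm d}^n/{\rm d}t^n\}$ and the identity theorem for entire functions.
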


The next proposition is relevant for signal
processing because it provides \textit{local representations}
of the usual norm, the scalar product and the convolution in $\BL{\pi}$,
respectively, which are defined globally, as improper integrals.

\begin{proposition}\label{local-space} Let $\K{n}$ be the chromatic
derivatives associated with the (rescaled and normalized)
Legendre polynomials, and $f, g\in\BL{\pi}$. Then the following
sums do not depend on $t\in\Rset$ and satisfy
\begin{eqnarray}
\sum_{n=0}^{\infty}K^n[f](t)^2\hspace*{-2mm}&=&\hspace*{-2mm}
\int_{-\infty}^{\infty} f(x)^2 dx;\\
\sum_{n=0}^{\infty}K^n[f](t)K^n[g](t)
\hspace*{-2mm}&=&\hspace*{-2mm} \int_{-\infty}^{\infty}
f(x)g(x) dx;\\
\sum_{n=0}^{\infty}K^n[f](t) K^n_t[g(u-t)]
\hspace*{-2mm}&=&\hspace*{-2mm}
\int_{-\infty}^{\infty}f(x)g(u-x) dx.
\end{eqnarray}
\end{proposition}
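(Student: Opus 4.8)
The plan is to read the three identities as Parseval relations for the orthonormal system of shifted spherical Bessel functions that appears in the chromatic expansion (Proposition~\ref{app}); with that proposition in hand, essentially all that remains is to verify orthonormality and to carry out the bookkeeping for the convolution.

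First I would fix $t\in\Rset$ and set $e^{t}_{n}(x):=\K{n}_{t}[\sinc(x-t)]$. By \eqref{CEL}--\eqref{CEL1} and \eqref{cdersinc} this equals $(-1)^{n}\K{n}[\sinc](x-t)=\sqrt{2n+1}\,{\mathrm j}_{n}(\pi(x-t))$, so $e^{t}_{n}$ is the translate by $t$ of $(-1)^{n}\K{n}[\sinc]\in\BL{\pi}$, hence itself lies in $\BL{\pi}$. Translation being an isometry of $L^{2}(\Rset)$, orthonormality of $\{e^{t}_{n}\}_{n\in\Nset}$ reduces to the case $t=0$, where Plancherel's theorem, the fact that the Fourier transform of $\sinc{}$ is the indicator function of $[-\pi,\pi]$, and the defining orthonormality of the rescaled Legendre polynomials give
\[
\int_{-\infty}^{\infty}\K{n}[\sinc](x)\,\K{m}[\sinc](x)\,{\rm d}x
=\frac{1}{2\pi}\int_{-\pi}^{\pi}\ii^{n}P^{\scriptscriptstyle{L}}_{n}(\omega)\,(-\ii)^{m}P^{\scriptscriptstyle{L}}_{m}(\omega)\,{\rm d}\omega
=\delta(m-n).
\]
Thus $\{e^{t}_{n}\}_{n\in\Nset}$ is an orthonormal system in $\BL{\pi}$ for every $t\in\Rset$.

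Now take real valued $f,g\in\BL{\pi}$, so that the right-hand sides are honest $L^{2}(\Rset)$ pairings. Proposition~\ref{app}, applied at the point $t$, gives $f=\sum_{n}\K{n}[f](t)\,e^{t}_{n}$ and $g=\sum_{n}\K{n}[g](t)\,e^{t}_{n}$ with convergence in $\BL{\pi}$; the coefficients are real because the operators $\K{n}$ have real coefficients, and $\sum_{n}\K{n}[f](t)^{2}<\infty$ by Proposition~\ref{fun}. Pairing the two expansions in $L^{2}(\Rset)$, using continuity of the inner product to pass the pairing through the sums, and then using $\langle e^{t}_{n},e^{t}_{m}\rangle=\delta(m-n)$ to collapse the resulting double series, I obtain
\[
\int_{-\infty}^{\infty}f(x)g(x)\,{\rm d}x=\sum_{n=0}^{\infty}\K{n}[f](t)\,\K{n}[g](t),
\]
and the first identity is the special case $g=f$. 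For the convolution identity I would apply this same identity with $g$ replaced by $h(x):=g(u-x)$: since $\widehat{h}(\omega)={\e}^{-\ii\omega u}\,\widehat{g}(-\omega)$ is again supported in $[-\pi,\pi]$ we have $h\in\BL{\pi}$, while by the definition of $\K{n}_{t}$ one has $\K{n}[h](t)=\K{n}_{t}[g(u-t)]$ and $\int_{-\infty}^{\infty}f(x)h(x)\,{\rm d}x=\int_{-\infty}^{\infty}f(x)g(u-x)\,{\rm d}x$. In every case the right-hand side carries no $t$, which is exactly the claimed independence of $t$.

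The only place where a little care is needed is the collapse of the double series, i.e.\ the interchange of the $L^{2}$ pairing with the two summations (together with the harmless disappearance of the $(-1)^{n+m}$ factors and of the conjugation, thanks to real-valuedness). This is routine once one invokes the $\BL{\pi}$-convergence guaranteed by Proposition~\ref{app}, since $(f,g)\mapsto\langle f,g\rangle$ is jointly continuous on $\BL{\pi}\times\BL{\pi}$, so it may be evaluated on the partial sums and passed to the limit, after which orthonormality reduces it to a single sum. In short, Proposition~\ref{local-space} is the Parseval-theoretic restatement of Proposition~\ref{app}, plus the elementary observation that reflection-and-translation maps $\BL{\pi}$ into itself.
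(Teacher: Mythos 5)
Your proof is correct, and it reaches the conclusion by a route that is the Fourier-transform mirror image of the one the paper takes. The paper proves the general version (Proposition~\ref{local-space-gen}) entirely in the frequency domain: equation \eqref{proj} identifies $(-\ii)^n\K{n}[f](u)$ as the Fourier coefficients of $\FT[f](\omega)\,{\e}^{\ii\omega u}$ with respect to the complete orthonormal system $\PPP$ in $\LT$, Parseval in $\LT$ then gives $\sum_n\K{n}[f](u)\overline{\K{n}[g](u)}=\doti{\FT[f]}{\FT[g]}$ (manifestly independent of $u$ since $|{\e}^{\ii\omega u}|=1$), and the Legendre specialization to $\int f\overline{g}\,{\rm d}x$ comes from the classical Plancherel theorem identifying $\LT=L^2[-\pi,\pi]$ with $L^2(\Rset)$ via $\widehat{f}$. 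You instead work in the time domain: you verify by Plancherel that $\{\K{n}_t[\sinc(\cdot-t)]\}_n$ is orthonormal in $L^2(\Rset)$, take the $\BL{\pi}$-convergent expansion of $f$ in that system from Proposition~\ref{app}, and apply Parseval in $L^2(\Rset)$. The two arguments are unitarily equivalent, but yours is more self-contained for the Legendre case (it needs only the classical Fourier transform and Proposition~\ref{app}, not the $\mom$-Fourier--Stieltjes machinery), whereas the paper's buys the statement for arbitrary chromatic moment functionals, where no time-domain $L^2$ pairing is available and the locally defined scalar product must be taken as the definition. Your reduction of the convolution identity to the scalar-product identity via $h(x)=g(u-x)$ is also a clean simplification of the paper's separate treatment of \eqref{convolution}.
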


\subsection{}
We finish this introduction by pointing to a close relationship
between the Shannon expansion and the chromatic expansion
associated with the Legendre polynomials. Firstly, by
\eqref{CEL1},
\begin{equation}\label{fn}
f(n)=\sum_{k=0}^{\infty} \;\K{k}[f](0)\;(-1)^k
\K{k}[\sinc\!](n).
\end{equation}
Since $\K{n}[\sinc\!](t)$ is an even function for even $n$ and
odd for odd $n$,  \eqref{no} implies
\begin{equation}\label{kn}
\K{k}[f](0)=\sum_{n=-\infty}^{\infty}\;f(n)\; (-1)^k
K^k[\sinc\!](n).
\end{equation}
Equations \eqref{fn} and \eqref{kn} show that the coefficients
of the Shannon expansion of a signal
-- the samples $f(n)$, and the coefficients of the chromatic
expansion of the signal -- the simultaneous samples of the
chromatic derivatives $\K{n}[f](0)$, are related by an
orthonormal operator defined by the infinite matrix
\[
\left[(-1)^k \K{k}\left[ \sinc\right](n)\ :\ k\in\Nset,
n\in\Zset\right]=\left[\sqrt{2k+1}\;{\mathrm j}_{k}(\pi n)\ :\ k\in\Nset,
n\in\Zset\right].
\]

Secondly, let $\mathcal{S}_u[f(u)]=f(u+1)$ be the unit shift
operator in the variable $u$ ($f$ might have other parameters).
The Shannon expansion for the set of sampling points $\{u+n\ :\
{n\in \Zset}\}$ can be written in a form analogous to
the chromatic expansion, using operator polynomials
$\mathcal{S}_u^n=\mathcal{S}_u\circ\ldots\circ\mathcal{S}_u$,
as
\begin{eqnarray}
f(t)&=&\sum_{n=0}^{\infty}f(u+n)\;\sinc (t-(u+n))\\
&=&\sum_{n=0}^{\infty} \mathcal{S}^{n}_u[f](u)\;
\mathcal{S}^{n}_u[\sinc (t-u)];\label{fancyN}
\end{eqnarray}
compare now \eqref{fancyN} with \eqref{CEL}. Note that the
family of operator polynomials $\{\mathcal{S}^{n}_u\}_{n\in
\Zset}$ is also an orthonormal system, in the sense that their
corresponding transfer functions $\{\e^{\ii n\;
\omega}\}_{n\in\Zset}$ form an orthonormal system in
$L^2[-\pi,\pi]$. Moreover, the transfer functions of the
families of operators $\{\K{n}\}_{n\in \Nset}$ and
$\{\mathcal{S}^{n}\}_{n\in \Zset}$, where $\K{n}$ are the
chromatic derivatives associated with the Legendre polynomials,
are orthogonal on $[-\pi,\pi]$ with respect to the
same, constant weight $\mathrm{w}(\omega)= 1/(2\pi)$.

In this paper we consider chromatic derivatives and chromatic
expansions which correspond to some very general families of
orthogonal polynomials, and prove generalizations of the above
propositions, extending our previous work \cite{IG5}.\footnote{
Chromatic expansions corresponding to general families of
orthogonal polynomials were first considered in \cite{CH}.
However,  Proposition 1 there is false; its attempted proof
relies on an incorrect use of the Paley-Wiener Theorem. In
fact, function $F_g$ defined there need not be extendable to an
entire function, as it can be shown using Example 4 in
Section~\ref{examples} of the present paper.} However, having
in mind the form of expansions \eqref{CEL} and \eqref{fancyN},
one can ask a more general (and somewhat vague) question.
\begin{question}\label{Q2}  What are the operators $A$ for which
there exists a family of operator polynomials $\{P_n(A)\}$,
orthogonal under a suitably defined notion of  orthogonality,
such that for an associated function
$\mathbf{m}_{\scriptscriptstyle{A}}(t)$,
\begin{equation*}
f(t)=\sum_{n}P_n(A)[f](u)\;P_n(A)
[\mathbf{m}_{\scriptscriptstyle{A}}(t-u)]
\end{equation*}
for all functions from a corresponding (and significant) class
$\mathcal{C}_{\scriptscriptstyle{A}}$?
\end{question}

\section{Basic Notions}

\subsection{Families of orthogonal polynomials} Let
$\mom:\mathcal{P}_{\omega}\rightarrow \Rset$ be a linear
functional on the vector space $\mathcal{P}_{\omega}$ of real
polynomials in the variable $\omega$ and let
${\mu}_{n} = \mom(\omega^{n})$. Such $\mom$ is a \textit{moment
functional}, ${\mu}_{n}$ is the \textit{moment} of
$\mom$ of order $n$ and the \textit{Hankel
determinant} of order $n$ is given by
\begin{equation*}\nonumber
\Delta_n=\left|\begin{array}{ccc} \mu_0&\ldots &\mu_n\\
\mu_1&\ldots &\mu_{n+1}\\
\ldots&\ldots&\ldots\\
\mu_{n}&\ldots&\mu_{2n}
\end{array}\right|.
\end{equation*}

The moment functionals $\mom$ which we consider are assumed to be:
\begin{enumerate}[(i)]
\item\label{c1} positive definite, i.e., $\Delta_{n}>0$ for all $n$;
such functionals also satisfy
$\mu_{2n}> 0$;
\item\label{c2} symmetric, i.e., $\mu_{2n+1}=0$ for all
    $n$;
\item\label{c3} normalized, so that $\mom(1)=\mu_0=1$.
\end{enumerate}

For functionals  $\mom$  which satisfy the above three conditions
there exists a family \PPP\  of polynomials with real
coefficients, such that
\begin{enumerate}[(a)]
\item \PPP\ is an orthonormal system with respect to \mom,
    i.e., for all $m,n$,
    \[\mom(\PP{m}{\omega}\,\PP{n}{\omega})=\delta(m-n);\]
\item each polynomial $\PP{n}{\omega}$ contains only
powers of $\omega$ of the same parity as $n$;
\item $\PP{0}{\omega}=1$.
\end{enumerate}

A family of polynomials is the family of
orthonormal polynomials
corresponding to a symmetric positive definite moment
functional \mom\ just in case there exists a sequence of reals
$\gamma_n>0$ such that for all $n>0$,
\begin{equation}\label{poly}
\PP{n+1}{\omega}=\frac{\omega}{\gamma_{n}}\,
\PP{n}{\omega}-\frac{\gamma_{n-1}}{\gamma_{n}}\,
\PP{n-1}{\omega}.
\end{equation}
If we set $\gamma_{-1}=1$ and $\PP{-1}{\omega}=0$,
then \eqref{poly} holds for $n=0$ as well.

We will make use of the Christoffel-Darboux equality for
orthogonal polynomials,
\begin{equation}\label{CDP}
(\omega-\sigma)\sum_{k=0}^{n}
\PP{{k}}{\omega}\PP{{k}}{\sigma}
= \gamma_n
(\PP{{n+1}}{\omega}\PP{{n}}{\sigma}-\PP{{n+1}}{\sigma}
\PP{{n}}{\omega}),
\end{equation}
and of its consequences obtained by setting $\sigma=-\omega$
in \eqref{CDP} to get
\begin{equation}\label{equal}
\omega\left(\sum_{k=0}^{n}\PP{{2k+1}}{\omega}^2-
\sum_{k=0}^{n}\PP{{2k}}{\omega}^2\right)=
\gamma_{2n+1}\,\PP{{2n+2}}{\omega}\,
\PP{{2n+1}}{\omega},
\end{equation}
and by by letting $\sigma\rightarrow\omega$ in \eqref{CDP} to
get
\begin{equation}\label{sumsquares}
\sum_{k=0}^{n}\PP{{k}}{\omega}^2 =  \gamma_n \,
(\PP{{n+1}}{\omega}^\prime
\PP{{n}}{\omega}-\PP{{n+1}}{\omega}
\PP{{n}}{\omega}^\prime).
\end{equation}

For  every positive definite moment functional \mom\ there
exists a non-decreasing bounded function  $ a (\omega)$, called
a \textit{moment distribution function}, such that for the associated
Stieltjes integral we have
\begin{equation}\label{l3}
\int_{-\infty}^{\infty}
\omega^{n}\,\da=\mu_n
\end{equation}
and such that for the corresponding family of polynomials $\PPP$
\begin{equation}\label{polyortho}
\int_{-\infty}^{\infty} \PP{n}{\omega}\,\PP{m}{\omega}
\,\da=\delta(m-n).
\end{equation}

We denote by $\LT$  the Hilbert space of functions
$\varphi:\Rset\rightarrow \Cset$  for which the
Lebesgue-Stieltjes integral
$\int_{-\infty}^{\infty}|\varphi(\omega)|^2\, \da$ is finite,
with the scalar product defined by
$\doti{\alpha}{\beta}=\int_{-\infty}^{\infty}\alpha(\omega)
\,\overline{\beta(\omega)}\, \da$, and with the
corresponding norm denoted by $\noi{\varphi}$.

We define a function $\mm:\Rset\rightarrow\Rset$ as
\begin{equation}\label{mm}
\mm(t)=\int_{-\infty}^{\infty}{\e}^{{\ii}\omega t }\da.
\end{equation}
Since
\begin{eqnarray*}
\int_{-\infty}^{\infty}|(\ii\omega)^{n}
\,{\e}^{{\ii}\,\omega t} | \da
\leq\left(\int_{-\infty}^{\infty}\!\!\omega^{2n} \da
\int_{-\infty}^{\infty}\!\!\da\right)^{1/2}
\!\!\!=\sqrt{\mu_{2n}}<\infty,
\end{eqnarray*}
we can differentiate \eqref{mm} under the integral sign any
number of times, and obtain that for all s$n$,
\begin{eqnarray}\label{dm}
&\mm^{(2n)}(0)=(-1)^n\mu_{2n};& \\
&\mm^{(2n+1)}(0)=0.&
\end{eqnarray}

\subsection{The chromatic derivatives}

Given a moment functional $\mom$ satisfying
conditions \eqref{c1} -- \eqref{c3} above, we associate with
\mom\ a family of linear
differential operators $\KK$ defined by the operator
polynomial\footnote{Thus, to obtain $\K{n}$, one replaces
$\omega^k$ in $\PP{n}{\omega}$ by $\ii^k D^k_t$, where
$\dd^k_t[f]=\frac{\mathrm{d}^k}{\mathrm{d}t^k}f(t)$. We use
the square brackets to indicate the arguments of operators
acting on various function spaces. If $A$ is a linear differential
operator, and if a function $f(t,\vec{w})$ has parameters
$\vec{w}$, we write $A_{t}[f]$ to distinguish the variable $t$
of differentiation; if $f(t)$ contains only variable $t$, we
write $A[f(t)]$ for $A_{t}[f(t)]$ and $\dd^k[f(t)]$ for
$\dd^k_t [f(t)]$. }
\begin{eqnarray*}
\K{n}_t=\frac{1}{\ii^n}\; P_{n}^{\scriptscriptstyle{\mathcal{M}}}
\left(\ii \dd_t\right),
\end{eqnarray*}
and call them \textit{the chromatic derivatives associated with}
\mom.  Since \mom\ is symmetric, such operators have real
coefficients and satisfy
the recurrence
\begin{equation}\label{three-term}
\K{n+1}=\frac{1}{{\gamma_{n}}}\,(\dd\circ
\K{n})+\frac{{\gamma_{n-1}}}{{\gamma_{n}}}\, \K{n-1},
\end{equation}
with the same coefficients $\gamma_n>0$ as in \eqref{poly}.
Thus,
\begin{equation}\label{iwt}
\K{n}_t[e^{\ii\omega t}]=
{\ii}^n\PP{n}{\omega}\,e^{\ii\omega t}
\end{equation}
and
\begin{equation}
\K{n}[\mm](t)=\int_{-\infty}^{\infty}
{\ii}^{n}\,\PP{n}{\omega}\,
{\e}^{{\ii}\omega t} \da.
\end{equation}
The basic properties of
orthogonal polynomials imply that for all $m,n,$
\begin{equation}\label{orthonorm}
(\K{n}\circ \K{m})[\mm](0)
=(-1)^{n}\delta(m-n),
\end{equation}
and, if $m < n$ or if $m-n$ is odd, then
\begin{equation}\label{bkn}
(\dd^m\circ\K{n})[\mm](0)=0.
\end{equation}

The following Lemma corresponds to the Christoffel-Darboux
equality for orthogonal polynomials and has a similar proof
which  uses \eqref{three-term} to represent the left hand side
of \eqref{C-D} as a telescoping sum.

\begin{lemma}[\!\!\cite{IG5}]\label{CD} Let $\KK$ be the family
of chromatic derivatives associated with a moment
functional $\mom$, and let $f,g\in C^\infty$; then
\begin{equation}\label{C-D}
 \dd\left[\sum_{m=0}^{n} \K{m}[f]\,\K{m}[g]\right]= {\gamma_{n}}\,
(\K{n+1}[f]\,\K{n}[g]+\K{n}[f]\,\K{n+1}[g]).
\end{equation}
\end{lemma}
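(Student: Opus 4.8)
The plan is to imitate the classical derivation of the Christoffel--Darboux identity, exploiting the three-term recurrence \eqref{three-term} for the chromatic derivatives in place of the recurrence \eqref{poly} for the polynomials. First I would fix $f,g\in C^\infty$ and, for each $m$, compute the derivative of a single product term. Using the Leibniz rule, $\dd\!\left[\K{m}[f]\,\K{m}[g]\right]=\dd\K{m}[f]\,\K{m}[g]+\K{m}[f]\,\dd\K{m}[g]$. The key move is to eliminate the bare derivative $\dd\circ\K{m}$ via \eqref{three-term}: solving that relation gives $(\dd\circ\K{m})=\gamma_m\K{m+1}-\gamma_{m-1}\K{m-1}$. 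Substituting this into the expression for $\dd\!\left[\K{m}[f]\,\K{m}[g]\right]$ yields
\[
\gamma_m\bigl(\K{m+1}[f]\,\K{m}[g]+\K{m}[f]\,\K{m+1}[g]\bigr)
-\gamma_{m-1}\bigl(\K{m-1}[f]\,\K{m}[g]+\K{m}[f]\,\K{m-1}[g]\bigr).
\]

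The next step is to recognise the right-hand side as a telescoping difference. Define $T_m=\gamma_m\bigl(\K{m+1}[f]\,\K{m}[g]+\K{m}[f]\,\K{m+1}[g]\bigr)$; then the displayed quantity is exactly $T_m-T_{m-1}$. Summing over $m=0,\dots,n$ and using $\gamma_{-1}=1$, $\K{-1}[\mm]=0$ — more precisely $\K{-1}=0$ as an operator, so $T_{-1}=0$ — the sum collapses to $T_n$, which is precisely the right-hand side of \eqref{C-D}. Interchanging the (finite) sum with the derivative on the left-hand side is trivially legitimate since there are finitely many terms and each is smooth, so $\dd\!\left[\sum_{m=0}^n\K{m}[f]\,\K{m}[g]\right]=\sum_{m=0}^n\dd\!\left[\K{m}[f]\,\K{m}[g]\right]=\sum_{m=0}^n(T_m-T_{m-1})=T_n$, which is the claim.

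There is essentially no analytic obstacle here: everything is a finite algebraic manipulation of smooth functions, and the only structural input is the operator recurrence \eqref{three-term}, which is already established in the excerpt. The one point deserving a moment's care is the base case $m=0$: I must check that \eqref{three-term} does hold for $n=0$ in the form needed, i.e.\ that with the conventions $\gamma_{-1}=1$ and $\K{-1}=0$ one gets $\dd\circ\K{0}=\gamma_0\K{1}$, consistent with the fact that $\K{0}$ is the identity operator ($P_0^{\mathcal M}=1$) so $\dd\circ\K{0}=\dd$, and $\gamma_0\K{1}=\gamma_0\cdot\frac{1}{\gamma_0}\dd=\dd$. With this verified, the telescoping sum starts cleanly from $T_{-1}=0$ and the proof is complete. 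Thus the "hard part" is merely bookkeeping of the boundary index; the heart of the argument is the substitution of \eqref{three-term} followed by telescoping, exactly as in the polynomial Christoffel--Darboux proof.
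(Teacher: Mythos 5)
Your proof is correct and is exactly the argument the paper intends: it explicitly describes the proof of Lemma~\ref{CD} as using \eqref{three-term} to write the left-hand side as a telescoping sum, which is precisely your substitution $(\dd\circ\K{m})=\gamma_m\K{m+1}-\gamma_{m-1}\K{m-1}$ followed by the collapse of $\sum_m(T_m-T_{m-1})$. Your check of the boundary term via $\gamma_{-1}=1$, $\K{-1}=0$ matches the paper's conventions, so nothing is missing.
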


\subsection{Chromatic expansions} Let $f$ be infinitely
differentiable at a real or complex $u$; the formal series
\begin{eqnarray}\label{cex}
\CE[f,u](t)&=&\sum_{k=0}^{\infty}\K{k}[f](u)\;\K{k}_u[\mm(t-u)]\\
&=&\sum_{k=0}^{\infty}(-1)^k\K{k}[f](u)\;\K{k}[\mm](t-u)\nonumber
\end{eqnarray}
is called the \textit{chromatic expansion\/} of $f$  associated
with \mom, centered at $u$, and
\begin{equation*}
\CA[f,n,u](t)=\sum_{k=0}^{n}(-1)^k\K{k}[f](u)\K{k}[\mm](t-u)
\end{equation*}
is the \textit{chromatic approximation\/} of $f$ of order $n$.

From \eqref{orthonorm} it follows that the chromatic
approximation $\CA[f,n,u](t)$ of order $n$ of $f(t)$ for all
$m\leq n$ satisfies
\begin{eqnarray*}
\K{m}_{t} [\CA[f,n,u](t)]\big |_{t=u}&=&\sum_{k=0}^{n} \,(-1)^k\K{k}
[f](u)\, (\K{m}\circ\K{k})[\mm](0)\ =\ \K{m}[f](u).
\end{eqnarray*}

Since $\K{m}$ is a linear combination of derivatives $\dd^k$
for $k\leq m$, also $f^{(m)}(u) = \dd^m_t[\CA[f,n,u](t)]\big |_{t=u}$
for all $m\leq n$.
In this sense, just like the Taylor approximation, a chromatic
approximation is a local approximation. Thus, for all $m\leq n$,
\begin{equation}\label{k-to-d}
f^{(m)}(u) = \dd^m_t[\CA[f,n,u](t)]\big |_{t=u} \!\!=
\sum_{k=0}^{n}(-1)^k \, \K{k}[f](u)\,(\dd^{m}\circ \K{k})[\mm](0).
\end{equation}
Similarly, since
${\dd^m_t}\!\left[\sum_{k=0}^{n}f^{(k)}(u){(t-u)^k}/{k!}\right]\big
|_{t=u}\!\!=f^{(m)}(u)$ for $m\leq n$, we also have
\begin{equation}\label{d-to-k}
\K{m}[f](u)= \K{m}_{t}
\left[\sum_{k=0}^{n}f^{(k)}(u){(t-u)^k}/{k!}\right]_{t=u}\!\!=\!
\sum_{k=0}^{n}f^{(k)}(u)\,
\K{m}\left[t^k/k!\right](0).
\end{equation}

Equations \eqref{k-to-d} and \eqref{d-to-k} for $m=n$  relate
the standard and the chromatic bases of the vector space space of linear
differential operators,
\begin{eqnarray}
\dd^{n} &=& \sum_{k=0}^{n}(-1)^k \, (\dd^{n} \circ \K{k})[\mm](0)\;
\K{k};
\label{inverse}\\
\K{n}&=&\sum_{k=0}^{n} \K{n}\left[t^k/k!\right](0)\;
\dd^k.\label{direct}
\end{eqnarray}

Note that, since for  $j>k$ all powers of $t$ in
$\K{k}\left[{t^j}/{j!}\right]$ are positive, we have
\begin{equation}\label{zero-mon} j>k\ \ \Rightarrow \ \
\K{k}\left[{t^j}/{j!}\right](0)=0.
\end{equation}

\section{Chromatic Moment Functionals}\label{sec3}

\subsection{} We now introduce the broadest class of moment
functionals which we study.

\begin{definition}
Chromatic moment functionals are symmetric positive definite
moment functionals for which the sequence
$\{\mu_n^{1/n}/n\}_{n\in\Nset}$ is bounded.
\end{definition}
If \mom\ is chromatic, we set
\begin{equation}\label{limsup}
\rho=\limsup_{n\rightarrow\infty}
\left(\frac{\mu_n}{n!}\right)^{1/n}=\e\;
\limsup_{n\rightarrow\infty} \frac{\mu_n^{1/n}}{n}<\infty.
\end{equation}

\begin{lemma}\label{meuw}
Let \mom\ be a chromatic moment functional and $\rho$ such that
\eqref{limsup} holds. Then for every $\alpha$ such that
$0\leq \alpha < 1/\rho$ the corresponding moment distribution
$a(\omega)$ satisfies
\begin{equation}\label{weight}
\int_{-\infty}^{\infty}{\e}^{\alpha|\omega|}\da <\infty.
\end{equation}
\end{lemma}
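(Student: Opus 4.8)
The plan is to control the Lebesgue--Stieltjes integral $\int_{-\infty}^{\infty}\mathrm{e}^{\alpha|\omega|}\,\mathrm{d}a(\omega)$ by expanding the exponential into its power series and integrating term by term. Since $\mathrm{e}^{\alpha|\omega|}=\sum_{k=0}^{\infty}\alpha^k|\omega|^k/k!$ is a series of nonnegative measurable functions, Tonelli's theorem (monotone convergence for series) lets me interchange sum and integral without any further justification, giving
\[
\int_{-\infty}^{\infty}\mathrm{e}^{\alpha|\omega|}\,\mathrm{d}a(\omega)
=\sum_{k=0}^{\infty}\frac{\alpha^k}{k!}\int_{-\infty}^{\infty}|\omega|^k\,\mathrm{d}a(\omega).
\]
So it suffices to show that the right-hand side converges for $0\le\alpha<1/\rho$.

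The key estimate is a bound on the odd absolute moments $\nu_k:=\int|\omega|^k\,\mathrm{d}a(\omega)$ in terms of the even moments $\mu_{2m}=\int\omega^{2m}\,\mathrm{d}a(\omega)$, which are the only ones directly controlled by $\rho$. For $k=2m$ even there is nothing to do: $\nu_{2m}=\mu_{2m}$. For $k=2m+1$ odd I would use Cauchy--Schwarz in $\LT$ applied to $|\omega|^m$ and $|\omega|^{m+1}$ against the finite measure $\mathrm{d}a$ (recall $\mu_0=1$), obtaining $\nu_{2m+1}\le\sqrt{\mu_{2m}\,\mu_{2m+2}}$. Alternatively, the simpler bound $\nu_{2m+1}\le \tfrac12(\mu_{2m}+\mu_{2m+2})$ from $|\omega|^{2m+1}\le\tfrac12(\omega^{2m}+\omega^{2m+2})$ also works and avoids even Cauchy--Schwarz. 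Either way, every $\nu_k$ is bounded by a fixed combination of two consecutive even moments.

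Now I combine this with the definition \eqref{limsup}: for every $\varepsilon>0$ there is a constant $C_\varepsilon$ with $\mu_{2m}\le C_\varepsilon\,(2m)!\,(\rho+\varepsilon)^{2m}$ for all $m$. Choosing $\varepsilon$ small enough that $\alpha(\rho+\varepsilon)<1$, one plugs the moment bounds into the series $\sum_k \alpha^k\nu_k/k!$. Using $\nu_k\le C\,(k{+}1)!\,(\rho+\varepsilon)^{k+1}$ (absorbing the shift from $\mu_{2m+2}$ into the factorial and into one extra power of $\rho+\varepsilon$), the general term is dominated by $C(\rho+\varepsilon)\,(k{+}1)\,[\alpha(\rho+\varepsilon)]^{k}$, whose sum is a convergent geometric-type series because $\alpha(\rho+\varepsilon)<1$. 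Hence the series converges and \eqref{weight} follows.

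I expect the only mildly delicate point to be bookkeeping the index shift and the extra $(\rho+\varepsilon)$-factor when passing from $\mu_{2m+2}$ back to a bound of the form $\mathrm{const}\cdot k!\,(\rho+\varepsilon)^k$ for $k=2m+1$; this is routine once one fixes $\varepsilon$ with $\alpha(\rho+\varepsilon)<1$ at the outset, so that a little slack in the geometric ratio is available to absorb the polynomial factor $(k+1)$ and the constant. Everything else — the Tonelli interchange, the moment comparison, and the final summation — is standard.
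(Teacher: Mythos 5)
Your proposal is correct and follows essentially the same route as the paper: expand $\mathrm{e}^{\alpha|\omega|}$ into its power series, interchange sum and integral (the paper does this on $[-b,b]$ and lets $b\to\infty$), bound each odd absolute moment by adjacent even moments (the paper uses $|\omega|^n<1+\omega^{n+1}$ where you use $|\omega|^{2m+1}\le\tfrac12(\omega^{2m}+\omega^{2m+2})$ or Cauchy--Schwarz), and conclude convergence from the growth rate $\limsup(\mu_n/n!)^{1/n}=\rho<1/\alpha$. The differences are only cosmetic (root test in the paper versus your explicit $\varepsilon$-and-geometric-series bookkeeping), and your index-shift accounting checks out.
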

\begin{proof}
For all $b>0$,
$\int_{-b}^{b}{\e}^{\alpha|\omega|}\da =
\sum_{n=0}^{\infty}{\alpha^n}/{n!}\int_{-b}^{b}|\omega|^n\da.$
For even $n$ we have $\int_{-b}^{b}\omega^n\da\leq\mu_n.$
For odd $n$ we have $|\omega|^n<1+\omega^{n+1}$ for
all $\omega$, and thus
$\int_{-b}^{b}|\omega|^n\da <\int_{-b}^{b}\da +
\int_{-b}^{b}\omega^{n+1}\da\leq 1 + \mu_{n+1}.$
Let $\beta_n=\mu_n$ if $n$ is even, and $\beta_n=1+\mu_{n+1}$
if $n$ is odd. Then also
$\int_{-\infty}^{\infty}{\e}^{\alpha|\omega|}\da \leq
\sum_{n=0}^{\infty}{\alpha^n}\beta_n/{n!}$.
Since $\limsup_{n\rightarrow\infty}(\beta_n/n!)^{1/n}=\rho$
and $0\leq \alpha<1/\rho$, the last sum converges to a
finite limit.
\end{proof}

On the other hand, the proof of Theorem 5.2
in \S II.5 of \cite{Fr} shows that if
\eqref{weight} holds for some $\alpha> 0$, then
\eqref{limsup} also holds for some $\rho\leq 1/\alpha$.
Thus, we get the following Corollary.
\begin{corollary}
A symmetric positive definite moment functional is chromatic
just in case for some $\alpha > 0$ the corresponding
moment distribution function $a(\omega)$ satisfies \eqref{weight}.
\end{corollary}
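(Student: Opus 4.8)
The statement is a biconditional, and the plan is to dispatch the two implications separately; essentially all the remaining work lies in the implication not yet recorded in the text.

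The forward implication is immediate: if \mom\ is chromatic then \eqref{limsup} defines a finite $\rho$, and Lemma~\ref{meuw} then yields \eqref{weight} for every $\alpha$ with $0\le\alpha<1/\rho$, hence for some $\alpha>0$.

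For the converse, I would start from the hypothesis that $\int_{-\infty}^{\infty}{\e}^{\alpha|\omega|}\,\da<\infty$ for some fixed $\alpha>0$ and estimate the moments directly. Since \mom\ is symmetric, $\mu_{2n+1}=0$, so the boundedness of $\{\mu_n^{1/n}/n\}_{n\in\Nset}$ is governed entirely by the even-indexed terms. For those I would use the elementary inequality $x^{2n}\le\bigl(2n/(\alpha{\e})\bigr)^{2n}{\e}^{\alpha x}$, valid for all $x\ge 0$ because $x^{2n}{\e}^{-\alpha x}$ attains its maximum at $x=2n/\alpha$. Applying it to $|\omega|$ and integrating against $\da$ gives $\mu_{2n}=\int_{-\infty}^{\infty}\omega^{2n}\,\da\le\bigl(2n/(\alpha{\e})\bigr)^{2n}\int_{-\infty}^{\infty}{\e}^{\alpha|\omega|}\,\da=C\,\bigl(2n/(\alpha{\e})\bigr)^{2n}$ with $C$ independent of $n$. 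Hence $\mu_{2n}^{1/(2n)}/(2n)\le C^{1/(2n)}/(\alpha{\e})$ is bounded, so $\{\mu_n^{1/n}/n\}_{n\in\Nset}$ is bounded and \mom\ is chromatic. For the sharper statement recorded just before the Corollary, dividing this bound by $(2n)!$ and invoking Stirling's formula gives $\rho=\limsup_{n}(\mu_n/n!)^{1/n}\le 1/\alpha$; this is the content of the portion of \cite{Fr} cited in the text, transcribed into the present notation.

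I do not expect a genuine obstacle in the converse. The only points needing care are observing that odd indices are harmless by symmetry, and tracking the constant in $x^{2n}{\e}^{-\alpha x}\le\bigl(2n/(\alpha{\e})\bigr)^{2n}$ precisely, so that the resulting bound on $\rho$ comes out as $1/\alpha$ rather than something weaker; both are routine.
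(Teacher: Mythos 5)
Your proposal is correct, and the forward implication is handled exactly as in the paper (an appeal to Lemma~\ref{meuw}). The difference is in the converse: the paper does not prove it at all, but simply cites the proof of Theorem 5.2 in \S II.5 of \cite{Fr} for the fact that \eqref{weight} with some $\alpha>0$ forces \eqref{limsup} to hold with some $\rho\leq 1/\alpha$. Your argument supplies a short self-contained substitute: the pointwise bound $x^{2n}\e^{-\alpha x}\leq (2n/(\alpha\e))^{2n}$ for $x\geq 0$ (maximum at $x=2n/\alpha$) gives $\mu_{2n}\leq C\,(2n/(\alpha\e))^{2n}$ with $C=\int_{-\infty}^{\infty}\e^{\alpha|\omega|}\da$, the odd moments vanish by symmetry, and Stirling's formula then yields both the boundedness of $\{\mu_n^{1/n}/n\}$ and the sharper estimate $\rho\leq 1/\alpha$ recorded in the text just before the Corollary. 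I checked the constants: $\mu_{2n}^{1/(2n)}/(2n)\leq C^{1/(2n)}/(\alpha\e)$ and $(\mu_{2n}/(2n)!)^{1/(2n)}\to$ at most $1/\alpha$, exactly as you claim. What your route buys is independence from the external reference (it is, in substance, the standard argument one would find there); what the paper's route buys is brevity and an explicit pointer to the classical source. Either is acceptable; there is no gap in your proposal.
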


\noindent \textbf{Note:} \textit{For the remaining part of this
section we assume that \mom\ is a chromatic moment functional.}\\

For every $a>0$, we let $S(a)=\{z\in
\Cset\;:\;|\mathrm{Im}(z)|<a\}$. The following Corollary
directly follows
from Lemma \ref{meuw}.
\begin{corollary}\label{euw}
If $u\in \st{2}$, then ${\e}^{\ii u\; \omega}\in\LT$.
\end{corollary}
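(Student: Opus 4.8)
The statement to prove is Corollary~\ref{euw}: if $u\in S(1/(2\rho))$, then $\e^{\ii u\,\omega}\in\LT$.

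\smallskip

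The plan is to reduce this to Lemma~\ref{meuw} by a direct estimate of the norm $\noi{\e^{\ii u\,\omega}}$. First I would write $u = \xi + \ii\eta$ with $\xi,\eta\in\Rset$ and $|\eta|<1/(2\rho)$, so that for real $\omega$ we have $|\e^{\ii u\,\omega}|^2 = |\e^{\ii\xi\omega}\e^{-\eta\omega}|^2 = \e^{-2\eta\omega}$. Hence
\[
\noi{\e^{\ii u\,\omega}}^2 = \int_{-\infty}^{\infty}\e^{-2\eta\omega}\,\da.
\]
The next step is to dominate $\e^{-2\eta\omega}$ by something of the form $\e^{\alpha|\omega|}$: since $-2\eta\omega \leq 2|\eta|\,|\omega|$ for all real $\omega$, we get $\e^{-2\eta\omega}\leq \e^{2|\eta|\,|\omega|}$ pointwise, so the integral above is bounded by $\int_{-\infty}^{\infty}\e^{2|\eta|\,|\omega|}\,\da$.

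\smallskip

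Now I would invoke Lemma~\ref{meuw} with $\alpha = 2|\eta|$. The hypothesis $|\eta|<1/(2\rho)$ is exactly $\alpha = 2|\eta| < 1/\rho$, and also $\alpha\geq 0$, so Lemma~\ref{meuw} applies and gives $\int_{-\infty}^{\infty}\e^{\alpha|\omega|}\,\da < \infty$. Therefore $\noi{\e^{\ii u\,\omega}}^2<\infty$, i.e.\ $\e^{\ii u\,\omega}\in\LT$, which is the claim. (Measurability of $\omega\mapsto\e^{\ii u\,\omega}$ is immediate since it is continuous.)

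\smallskip

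I do not expect any real obstacle here: the only point requiring a moment of care is the translation between the ``strip'' parameter $1/(2\rho)$ appearing in the definition of $S(\,\cdot\,)$ and the exponential-weight parameter $\alpha$ in \eqref{weight}, namely that the factor $2$ from $|\e^{\ii u\,\omega}|^2$ precisely matches the $2$ in the denominator of $1/(2\rho)$. Everything else is the elementary pointwise bound $-2\eta\omega\leq 2|\eta|\,|\omega|$ followed by a citation of Lemma~\ref{meuw}. This is why the corollary is stated as following ``directly'' from that lemma.
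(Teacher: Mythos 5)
Your proof is correct and is exactly the argument the paper intends: the paper gives no explicit proof, stating only that the corollary ``directly follows from Lemma~\ref{meuw},'' and your computation $|{\e}^{\ii u\,\omega}|^2={\e}^{-2\eta\omega}\leq{\e}^{2|\eta|\,|\omega|}$ with $\alpha=2|\eta|<1/\rho$ is precisely that direct deduction. Nothing to add.
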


We now extend function $\mm(t)$ given by \eqref{mm} from
$\Rset$ to the complex strip $\st{}$.
\begin{proposition}\label{anaz}
Let for $z\in \st{}$,
\begin{equation}\label{mmz}
\mm(z)=\int_{-\infty}^{\infty}{\e}^{{\ii}\omega z }\da.
\end{equation}
Then $\mm(z)$ is analytic on the strip $\st{}$.
\end{proposition}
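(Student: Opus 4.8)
The plan is to show that the integral defining $\mm(z)$ converges locally uniformly on the open strip $\st{}=S(1/\rho)$ and that it is given locally by a convergent power series, or alternatively to apply Morera's theorem. First I would fix any $z_0\in \st{}$, so $|\mathrm{Im}(z_0)|<1/\rho$, and choose $\alpha$ with $|\mathrm{Im}(z_0)|<\alpha<1/\rho$; then pick a closed disk $\overline{B}(z_0,r)\subset \st{}$ small enough that $|\mathrm{Im}(z)|\leq \alpha$ for all $z\in \overline{B}(z_0,r)$. For such $z$ one has the pointwise bound $|{\e}^{\ii\omega z}| = {\e}^{-\omega\,\mathrm{Im}(z)}\leq {\e}^{\alpha|\omega|}$, and by Lemma~\ref{meuw} the function ${\e}^{\alpha|\omega|}$ is integrable with respect to $\da$. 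Hence the integrand is dominated, uniformly for $z\in \overline{B}(z_0,r)$, by a fixed $\da$-integrable function, so the integral converges absolutely and uniformly on $\overline{B}(z_0,r)$.

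Next I would establish analyticity. The cleanest route is Morera plus Fubini: for any closed triangle (or rectangle) $\Delta\subset B(z_0,r)$, the map $z\mapsto {\e}^{\ii\omega z}$ is entire, so $\oint_{\partial\Delta}{\e}^{\ii\omega z}\,dz=0$ for each fixed $\omega$; the uniform domination just established justifies interchanging $\oint_{\partial\Delta}$ with $\int_{-\infty}^{\infty}\,\da$ (Fubini, the product measure being finite against the bounded contour), giving $\oint_{\partial\Delta}\mm(z)\,dz=0$. Since $\mm$ is also continuous on $B(z_0,r)$ — continuity follows from the same dominated convergence argument applied along any sequence $z_k\to z$ in the disk — Morera's theorem yields that $\mm$ is analytic on $B(z_0,r)$. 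As $z_0\in\st{}$ was arbitrary, $\mm$ is analytic on all of $\st{}$.

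Alternatively, and perhaps more in keeping with the paper's style, one can differentiate under the integral sign directly: the same bound $|(\ii\omega)^n{\e}^{\ii\omega z}|\leq |\omega|^n{\e}^{\alpha|\omega|}$ together with $\int_{-\infty}^{\infty}|\omega|^n{\e}^{\alpha|\omega|}\,\da<\infty$ (again from Lemma~\ref{meuw}, or from finiteness of $\int{\e}^{\alpha'|\omega|}\,\da$ for $\alpha<\alpha'<1/\rho$, since $|\omega|^n{\e}^{\alpha|\omega|}\leq C_n{\e}^{\alpha'|\omega|}$) licenses differentiating \eqref{mmz} in $z$ any number of times on the disk, so $\mm$ is holomorphic there with $\mm^{(n)}(z)=\int_{-\infty}^{\infty}(\ii\omega)^n{\e}^{\ii\omega z}\,\da$. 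The main obstacle is essentially bookkeeping: one must be careful that the exponential weight $\alpha$ is chosen strictly between $|\mathrm{Im}(z_0)|$ and $1/\rho$ so that Lemma~\ref{meuw} applies, and that the disk around $z_0$ stays inside the strip where this bound is valid — there is no uniform control as $\mathrm{Im}(z)\to 1/\rho$, which is exactly why the conclusion is analyticity on the open strip rather than its closure.
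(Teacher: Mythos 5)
Your argument is correct. Your primary route (dominate $|{\e}^{\ii\omega z}|$ by ${\e}^{\alpha|\omega|}$ with $|\mathrm{Im}(z)|\le\alpha<1/\rho$, invoke Lemma~\ref{meuw} for integrability, then Fubini plus Morera) differs from the paper's, which instead verifies directly that $\int_{-\infty}^{\infty}|(\ii\omega)^n{\e}^{\ii\omega z}|\da<\infty$ for every $n$ and every $z$ in the strip and concludes by differentiating under the integral sign; your second, ``alternative'' route is exactly that argument. The only real divergence in that alternative is how the dominating integral is controlled: the paper re-runs the computation from the proof of Lemma~\ref{meuw}, expanding ${\e}^{|\omega y|}$ into $\sum_k |y|^k|\omega|^k/k!$ and bounding $\int_{-b}^{b}|\omega|^{n+k}\da$ by moment quantities $\beta_k$ with $\limsup_k(\beta_k/k!)^{1/k}=\rho$, whereas you absorb the polynomial factor $|\omega|^n$ into a slightly larger exponential ${\e}^{\alpha'|\omega|}$ ($\alpha<\alpha'<1/\rho$) and cite Lemma~\ref{meuw} as a black box. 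Your version is shorter and avoids repeating the moment estimates; the paper's version has the mild advantage of producing, along the way, the explicit representation $\mm^{(n)}(z)=\int_{-\infty}^{\infty}(\ii\omega)^n{\e}^{\ii\omega z}\da$ on the whole strip $\st{}$, in the same style as the later Proposition~\ref{f-phi}. The Morera route proves analyticity without giving that formula directly, but the proposition only asks for analyticity, so nothing is lost. Your closing remark about why the conclusion is confined to the open strip is accurate.
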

\begin{proof}
Fix $n$ and let $z=x+\ii y$ with $|y|<1/\rho$; then for
every $b>0$,
\begin{equation*}
\int_{-b}^{b}|
(\ii \omega)^n{\e}^{{\ii}\omega z }|\da
\leq\int_{-b}^{b}|\omega|^n {\e}^{|\omega y|}\da
=\sum_{k=0}^{\infty}\frac{|y|^k}{k!}\int_{-b}^{b} |\omega|^{n+k}\da.
\end{equation*}
As in the proof of Lemma \ref{meuw}, we let
$\beta_k=\mu_{n+k}$ for even values of $n+k$, and
$\beta_k=1+\mu_{n+k+1}$ for odd values of $n+k$; then the above
inequality implies
\[\int_{-\infty}^{\infty}|
(\ii \omega)^n{\e}^{{\ii}\omega z }|\da
\leq\sum_{k=0}^{\infty}\frac{|y|^k\beta_k}{k!},\] and it is
easy to see that for every fixed $n$,
$\limsup_{k\rightarrow\infty} (\beta_k/k!)^{1/k}=\rho$.
\end{proof}

\begin{proposition}\label{f-phi}
Let $\varphi(\omega)\in \LT$; we can define a
corresponding function
$f_{\varphi}:\st{2}\rightarrow \Cset$ by
\begin{equation}\label{f-int}
f_{\varphi}(z)=\int_{-\infty}^{\infty}\varphi(\omega)
e^{{\ii}\omega z }\,\da.
\end{equation}
Such $f_{\varphi}(z)$ is analytic on $\st{2}$ and for all $n$
and $z\in \st{2}$,
\begin{equation}\label{fourier-int-der}
\K{n}[f_{\varphi}](z)=\int_{-\infty}^{\infty}
{\ii}^{n}\,\PP{n}{\omega}\,
\varphi(\omega)\, {\e}^{{\ii}\omega z} \da.
\end{equation}
\end{proposition}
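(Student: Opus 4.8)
The plan is to (i) check that $f_\varphi$ is well defined on $\st{2}$, (ii) obtain analyticity together with the formula $f_\varphi^{(k)}(z)=\int(\ii\omega)^{k}\varphi(\omega)\e^{\ii\omega z}\,\da$ by differentiating under the integral sign, and (iii) read off \eqref{fourier-int-der} from the linearity of $\K{n}$.

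For (i): if $z=x+\ii y\in\st{2}$ then $2|y|<1/\rho$, so $\omega\mapsto\e^{\ii\omega z}$ lies in $\LT$ by Corollary~\ref{euw}; hence the Cauchy--Schwarz inequality in $\LT$ gives $\int_{-\infty}^{\infty}|\varphi(\omega)\e^{\ii\omega z}|\,\da\le\noi{\varphi}\,\noi{\e^{\ii\omega z}}<\infty$, so \eqref{f-int} converges absolutely. The factor $2$ produced by Cauchy--Schwarz is exactly why the strip must be halved: one needs $\e^{-2\omega y}$, not merely $\e^{-\omega y}$, to be $\da$-integrable, i.e.\ $2|y|<1/\rho$ by Lemma~\ref{meuw}.

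For (ii): fix a sub-strip $|\mathrm{Im}(z)|\le y_0<1/(2\rho)$ and pick $\varepsilon>0$ with $2y_0+\varepsilon<1/\rho$. Using $|\omega|^{2n}\le C_n\e^{\varepsilon|\omega|}$ and Cauchy--Schwarz once more, one obtains, uniformly for $z$ in the sub-strip,
\[
\int_{-\infty}^{\infty}\bigl|(\ii\omega)^{n}\varphi(\omega)\,\e^{\ii\omega z}\bigr|\,\da
\le\sqrt{C_n}\,\noi{\varphi}\Bigl(\int_{-\infty}^{\infty}\e^{(2y_0+\varepsilon)|\omega|}\,\da\Bigr)^{1/2}<\infty
\]
by Lemma~\ref{meuw}. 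Thus on each such sub-strip the $n$-th $z$-derivative of the integrand $\varphi(\omega)\e^{\ii\omega z}$ is dominated by a fixed $\da$-integrable function independent of $z$, so differentiation under the integral sign is legitimate to every order; consequently $f_\varphi$ is holomorphic on $\st{2}$ and $f_\varphi^{(k)}(z)=\int_{-\infty}^{\infty}(\ii\omega)^{k}\varphi(\omega)\e^{\ii\omega z}\,\da$ for all $k$ and $z\in\st{2}$. (Analyticity alone could instead be deduced from Morera's theorem: absolute convergence of the double integral lets one apply Fubini to swap $\oint_{\partial\Delta}$ with $\int\!\da$ for any triangle $\Delta\subset\st{2}$, and $\oint_{\partial\Delta}\e^{\ii\omega z}\,dz=0$.)

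For (iii): since $\mom$ is symmetric, $\K{n}$ is a differential operator of order $n$ with real coefficients; writing $\K{n}=\sum_{k=0}^{n}a_k\dd^{k}$ and applying it to $\e^{\ii\omega t}$, \eqref{iwt} yields the polynomial identity $\sum_{k=0}^{n}a_k(\ii\omega)^{k}=\ii^{n}\PP{n}{\omega}$. Linearity of $\K{n}$ and the derivative formula from (ii) then give
\[
\K{n}[f_\varphi](z)=\sum_{k=0}^{n}a_k\,f_\varphi^{(k)}(z)=\int_{-\infty}^{\infty}\Bigl(\sum_{k=0}^{n}a_k(\ii\omega)^{k}\Bigr)\varphi(\omega)\,\e^{\ii\omega z}\,\da=\int_{-\infty}^{\infty}\ii^{n}\PP{n}{\omega}\,\varphi(\omega)\,\e^{\ii\omega z}\,\da ,
\]
which is \eqref{fourier-int-der}. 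The only real work is the uniform domination in step (ii); it is the same estimate already performed for $\mm(z)$ in Proposition~\ref{anaz}, the new feature being the extra factor $\varphi(\omega)$, which Cauchy--Schwarz absorbs at the price of doubling the exponential rate and hence halving the width of the strip.
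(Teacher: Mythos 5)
Your proof is correct and follows essentially the same route as the paper: Cauchy--Schwarz separates $\varphi(\omega)$ from the exponential--polynomial factor (which is exactly what halves the strip), and the resulting uniform integrable majorant justifies differentiation under the integral sign, after which \eqref{fourier-int-der} is pure linearity via \eqref{iwt}. The only cosmetic difference is that the paper expands ${\e}^{|\omega y|}$ as a power series and controls the moments $\mu_{2n+2k}$ by a root test, whereas you bound $|\omega|^{2n}{\e}^{2y_0|\omega|}$ by $C_n {\e}^{(2y_0+\varepsilon)|\omega|}$ and invoke Lemma~\ref{meuw} directly; both produce the same domination.
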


\begin{proof}
Let $z = x +\ii y$, with $|y|<1/(2\rho)$. For every $n$ and
$b>0$ we have
\begin{eqnarray*}
&&\hspace*{-20mm}\int_{-b}^{b}| (\ii \omega)^n
\varphi(\omega){\e}^{{\ii}\omega z }|\da\\
&\leq&\int_{-b}^{b}|\omega|^n |\varphi(\omega)
|{\e}^{|\omega y|}\da\\
&\leq&\sum_{k=0}^{\infty}\frac{|y|^k}{k!}\int_{-b}^{b}
|\omega|^{n+k}|\varphi(\omega)| \da\\
&\leq&\sum_{k=0}^{\infty}\frac{|y|^k}{k!}
\left(\int_{-b}^{b} \omega^{2n+2k}\da\int_{-b}^{b}
|\varphi(\omega)|^2 \da\right)^{1/2}.
\end{eqnarray*}
Thus, also \[\int_{-\infty}^{\infty}| (\ii \omega)^n
\varphi(\omega){\e}^{{\ii}\omega z }|\da\leq
\noi{\varphi(\omega)}
\sum_{k=0}^{\infty}\frac{|y|^k}{k!}\sqrt{\mu_{2n+2k}}.\]
The claim now follows from the fact that
$\limsup_{k\rightarrow\infty}
{\sqrt[2k]{\mu_{2n+2k}}}/{\sqrt[k]{k!}}=2\rho$ for every fixed
$n$.
\end{proof}

\begin{lemma}\label{complete}
If \mom\ is chromatic, then $\PPP$ is a complete system in
$\LT$.
\end{lemma}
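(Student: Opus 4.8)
The plan is to show that no nonzero $\varphi \in \LT$ is orthogonal to every $\PP{n}{\omega}$. Since the $\PP{n}{\omega}$ span the vector space of all polynomials, it suffices to show that if $\varphi \in \LT$ satisfies $\doti{\varphi}{\omega^n} = 0$ for all $n \in \Nset$, then $\varphi = 0$ in $\LT$ (i.e. $a$-almost everywhere). First I would observe that $\doti{\varphi}{\omega^n} = \int_{-\infty}^{\infty} \varphi(\omega)\,\omega^n\,\da$ is well-defined: by Cauchy--Schwarz, $\int |\varphi(\omega)|\,|\omega|^n\,\da \leq \noi{\varphi}\,(\int \omega^{2n}\,\da)^{1/2} = \noi{\varphi}\sqrt{\mu_{2n}} < \infty$, so all the moments of the (complex) measure $\varphi\,\da$ are finite.

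The key step is to pass to the function $f_\varphi(z) = \int_{-\infty}^{\infty}\varphi(\omega)\,\e^{\ii\omega z}\,\da$ supplied by Proposition~\ref{f-phi}, which is analytic on the strip $\st{2} = S(1/(2\rho))$. Differentiating under the integral sign (justified exactly as in the proof of Proposition~\ref{f-phi}), one gets $f_\varphi^{(n)}(0) = \int_{-\infty}^{\infty}\varphi(\omega)\,(\ii\omega)^n\,\da = \ii^n\doti{\varphi}{\omega^n} = 0$ for every $n$. Since $f_\varphi$ is analytic on a connected neighborhood of $0$, all its Taylor coefficients at $0$ vanishing forces $f_\varphi \equiv 0$ on $\st{2}$; in particular $f_\varphi(x) = \int_{-\infty}^{\infty}\varphi(\omega)\,\e^{\ii\omega x}\,\da = 0$ for all real $x$. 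This says the Fourier--Stieltjes transform of the finite complex measure $\mathrm{d}\nu(\omega) = \varphi(\omega)\,\da$ vanishes identically. (The measure is finite in total variation because $\int|\varphi|\,\da \leq \noi{\varphi}\sqrt{\mu_0} = \noi{\varphi} < \infty.$) By the uniqueness theorem for Fourier transforms of finite Borel measures on $\Rset$, $\nu = 0$, i.e. $\varphi(\omega)\,\da = 0$, which means $\varphi = 0$ $a$-almost everywhere, hence $\varphi = 0$ in $\LT$.

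The main obstacle is the justification that $f_\varphi^{(n)}(0) = \ii^n\doti{\varphi}{\omega^n}$ — i.e. that one may differentiate under the integral sign arbitrarily many times — but this is already contained in Proposition~\ref{f-phi} together with formula~\eqref{fourier-int-der} applied with $\K{n}$ replaced by the plain derivative $\dd^n$ (alternatively, one uses \eqref{inverse} to express $\dd^n$ as a finite combination of the $\K{k}$, each of which is handled by \eqref{fourier-int-der}), so the analytic machinery is already in place. A secondary point to be careful about is invoking Fourier uniqueness for a measure that is complex-valued and supported on all of $\Rset$ rather than compactly; this is standard, but if one prefers a self-contained argument, one can instead split $\varphi = \varphi_1 + \ii\varphi_2$ into real and imaginary parts, reduce to signed measures via Hahn decomposition, and apply the classical moment-problem uniqueness for $a$ (valid here because, by Lemma~\ref{meuw}, $a$ has exponential moments, hence is determinate). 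Either route completes the proof.
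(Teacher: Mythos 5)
Your proof is correct, but it takes a genuinely different route from the paper: the paper disposes of Lemma~\ref{complete} in one line by citing a theorem of Riesz (Theorem 5.1 in \S II.5 of Freud's \emph{Orthogonal Polynomials}), which asserts completeness of \PPP\ in \LT\ already under the weaker hypothesis $\liminf_{n\rightarrow\infty}(\mu_n/n!)^{1/n}<\infty$. You instead give a direct, self-contained argument: orthogonality of $\varphi$ to all polynomials kills every derivative of $f_\varphi$ at $0$, analyticity of $f_\varphi$ on the connected strip $\st{2}$ (Proposition~\ref{f-phi}) then forces $f_\varphi\equiv 0$, and uniqueness of the Fourier--Stieltjes transform of the finite complex measure $\varphi\,\da$ gives $\varphi=0$ $a$-a.e. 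Each step checks out, including the reduction from the $\PP{n}{\omega}$ to the monomials (the $\PP{n}{\omega}$ have exact degree $n$, so they span the polynomials) and the justification of differentiating under the integral, which is indeed already contained in the estimates of Proposition~\ref{f-phi}. What each approach buys: your argument is transparent and stays entirely within the machinery of Section~\ref{sec3}, but it leans essentially on the \emph{limsup} condition \eqref{limsup} defining chromatic functionals, since that is what puts $f_\varphi$ on a full strip around $\Rset$; Riesz's theorem, as cited in the paper, reaches the same conclusion from only a \emph{liminf} bound, where no such strip is available and a subtler argument is needed. For the lemma as stated (\mom\ chromatic) the extra generality is not needed, so your proof is a legitimate and arguably more informative replacement. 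One small caution about your closing alternative: passing from determinacy of the moment problem for $a(\omega)$ to density of polynomials in $\LT$ is itself a (Riesz) theorem, not an immediate consequence, so the Fourier-uniqueness route you feature as the main line is the one to keep.
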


\begin{proof}
Follows from a theorem of  Riesz (see, for example, Theorem 5.1
in \S II.5 of \cite{Fr}) which asserts that if
$\liminf_{n\rightarrow\infty}
\left({\mu_n}/{n!}\right)^{1/n}<\infty$, then $\PPP$ is a
complete system in $\LT$.\footnote{Note that we need the
stronger condition $\limsup_{n\rightarrow\infty}
\left({\mu_n}/{n!}\right)^{1/n}<\infty$ to insure that function
$\mm(z)$ defined by \eqref{mmz} is analytic on a strip
(Proposition~\ref{anaz}).}

\end{proof}

\begin{proposition}\label{some}
Let $\varphi(\omega)\in\LT$; if for some fixed $u\in\st{2}$ the
function $\varphi(\omega){\e}^{{\ii}\omega u}$ also belongs to
$\LT$, then in $\LT$ we have
\begin{equation}\label{ft-expand}
\varphi(\omega)e^{{\ii}\omega u }=\sum_{n=0}^{\infty}(-{\ii})^{n}
\K{n}[f_{\varphi}](u)\,\PP{n}{\omega},
\end{equation}
and for $f_{\varphi}(z)$ given by \eqref{f-int} we have
\begin{equation}\label{inde}
\sum_{n=0}^{\infty} |\K{n}[f_{\varphi}](u)|^2=
\noi{\varphi(\omega){\e}^{{\ii}\omega u}}^{2} <\infty.
\end{equation}
\end{proposition}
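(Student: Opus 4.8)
The plan is to recognize the series in \eqref{ft-expand} as the abstract Fourier expansion of the function $\psi(\omega):=\varphi(\omega)\,\e^{\ii\omega u}$ with respect to the orthonormal polynomial system $\PPP$, and then to read off both assertions from the completeness of that system in $\LT$. By hypothesis $\psi\in\LT$, so first I would compute the Fourier coefficients of $\psi$ relative to $\PPP$. Since each $\PP{n}{\omega}$ has real coefficients, $\overline{\PP{n}{\omega}}=\PP{n}{\omega}$ on $\Rset$, and hence
\[
\doti{\psi}{\PP{n}{\omega}}=\int_{-\infty}^{\infty}\varphi(\omega)\,\e^{\ii\omega u}\,\PP{n}{\omega}\,\da .
\]
This integral is absolutely convergent by the Cauchy--Schwarz inequality in $\LT$: the factor $\varphi(\omega)\,\e^{\ii\omega u}$ lies in $\LT$ by assumption, and the polynomial $\PP{n}{\omega}$ lies in $\LT$ because \mom\ is positive definite, so all moments are finite.

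Next I would match this coefficient with a chromatic derivative of $f_{\varphi}$. Since $u\in\st{2}$, Proposition~\ref{f-phi} applies and formula \eqref{fourier-int-der} gives
\[
\K{n}[f_{\varphi}](u)=\int_{-\infty}^{\infty}\ii^{n}\,\PP{n}{\omega}\,\varphi(\omega)\,\e^{\ii\omega u}\,\da
=\ii^{n}\,\doti{\psi}{\PP{n}{\omega}} ,
\]
so that $\doti{\psi}{\PP{n}{\omega}}=(-\ii)^{n}\,\K{n}[f_{\varphi}](u)$, which is exactly the coefficient appearing in \eqref{ft-expand}.

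Then I would invoke the Hilbert-space machinery. By Lemma~\ref{complete}, $\PPP$ is a complete orthonormal system in $\LT$, so every element of $\LT$ coincides with the sum of its Fourier series in the $\LT$-norm; applied to $\psi$ this yields
\[
\varphi(\omega)\,\e^{\ii\omega u}=\psi(\omega)=\sum_{n=0}^{\infty}\doti{\psi}{\PP{n}{\omega}}\,\PP{n}{\omega}
=\sum_{n=0}^{\infty}(-\ii)^{n}\,\K{n}[f_{\varphi}](u)\,\PP{n}{\omega}
\]
in $\LT$, which is \eqref{ft-expand}. Parseval's identity for the same orthonormal basis then gives
\[
\sum_{n=0}^{\infty}\bigl|\K{n}[f_{\varphi}](u)\bigr|^{2}
=\sum_{n=0}^{\infty}\bigl|\doti{\psi}{\PP{n}{\omega}}\bigr|^{2}
=\noi{\psi}^{2}=\noi{\varphi(\omega)\,\e^{\ii\omega u}}^{2},
\]
which is finite precisely because $\psi\in\LT$; this is \eqref{inde}.

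The argument is essentially bookkeeping once completeness is available, so I do not expect a serious obstacle. The one place that needs care is the identity $\doti{\psi}{\PP{n}{\omega}}=(-\ii)^{n}\K{n}[f_{\varphi}](u)$: it relies on the integral representation \eqref{fourier-int-der} being valid at the possibly non-real point $u\in\st{2}$ and on the integrability of $\PP{n}{\omega}\,\varphi(\omega)\,\e^{\ii\omega u}$, which are supplied, respectively, by Proposition~\ref{f-phi} and by Cauchy--Schwarz applied to the two $\LT$ factors.
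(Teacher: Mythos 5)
Your proof is correct and follows essentially the same route as the paper: identify the $\PPP$-Fourier coefficients of $\varphi(\omega)\e^{\ii\omega u}$ with $(-\ii)^n\K{n}[f_{\varphi}](u)$ via \eqref{fourier-int-der} from Proposition~\ref{f-phi}, then apply completeness of $\PPP$ (Lemma~\ref{complete}) and Parseval's theorem. The paper's own proof is just a terser version of exactly this argument.
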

\begin{proof}
By Proposition \ref{f-phi}, if $u\in\st{2}$,
then equation \eqref{fourier-int-der} holds
for the corresponding $f_{\varphi}$ given by \eqref{f-int}. If
also $\varphi(\omega){\e}^{{\ii}\omega u}\in\LT$, then
\eqref{fourier-int-der} implies that
\begin{equation}\label{proj}
\doti{\varphi(\omega){\e}^{{\ii}\omega u}}{\PP{n}{\omega}}=
(-{\ii})^n\K{n}[f_{\varphi}](u).
\end{equation}
Since $\{\PP{n}{\omega}\}_{n\in\Nset}$ is a complete
orthonormal system in $\LT$, \eqref{proj} implies
\eqref{ft-expand}, and Parseval's Theorem implies \eqref{inde}.
\end{proof}

\begin{corollary}\label{constant}
For every $\varphi(\omega)\in\LT$ and every $u\in\Rset$,
equality \eqref{ft-expand} holds and
\begin{equation}\sum_{n=0}^{\infty} |\K{n}[f_{\varphi}](u)|^2
=\noi{\varphi(\omega)}^{2}.
\end{equation}
Thus, the sum $\sum_{n=0}^{\infty} |\K{n}[f_{\varphi}](u)|^2$
is independent of $u\in \Rset$.
\end{corollary}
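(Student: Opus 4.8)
The plan is to reduce Corollary~\ref{constant} to Proposition~\ref{some} by checking that, when $u$ is \emph{real}, the extra hypothesis of Proposition~\ref{some}---namely that $\varphi(\omega)\,{\e}^{{\ii}\omega u}\in\LT$---is automatically satisfied for every $\varphi\in\LT$. Indeed, for real $u$ we have $|{\e}^{{\ii}\omega u}|=1$ for all $\omega\in\Rset$, so the Lebesgue--Stieltjes integral $\int_{-\infty}^{\infty}|\varphi(\omega)\,{\e}^{{\ii}\omega u}|^2\,\da=\int_{-\infty}^{\infty}|\varphi(\omega)|^2\,\da=\noi{\varphi(\omega)}^2<\infty$, and moreover this shows $\noi{\varphi(\omega){\e}^{{\ii}\omega u}}=\noi{\varphi(\omega)}$. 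Since every $u\in\Rset$ lies in $\st{2}=\{z:|\mathrm{Im}(z)|<1/(2\rho)\}$, Proposition~\ref{some} applies verbatim.

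Concretely, the steps are: first, fix $\varphi\in\LT$ and an arbitrary $u\in\Rset$; observe $u\in\st{2}$ and $\varphi(\omega){\e}^{{\ii}\omega u}\in\LT$ with the same norm as $\varphi$, by the pointwise modulus computation above. Second, invoke Proposition~\ref{some}: this immediately yields the expansion \eqref{ft-expand} (with $u$ real) and the identity $\sum_{n=0}^{\infty}|\K{n}[f_{\varphi}](u)|^2=\noi{\varphi(\omega){\e}^{{\ii}\omega u}}^2$. Third, substitute $\noi{\varphi(\omega){\e}^{{\ii}\omega u}}=\noi{\varphi(\omega)}$ to get $\sum_{n=0}^{\infty}|\K{n}[f_{\varphi}](u)|^2=\noi{\varphi(\omega)}^2$. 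Finally, since the right-hand side $\noi{\varphi(\omega)}^2$ does not involve $u$ at all, the sum is the same for every $u\in\Rset$, which is the last assertion.

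There is essentially no obstacle here: the whole content is the trivial observation that multiplication by a unimodular factor is an isometry of $\LT$, so the ``side condition'' in Proposition~\ref{some} is vacuous on the real line. The only thing to be a little careful about is that the scalar product and norm of $\LT$ are defined by a Lebesgue--Stieltjes integral against $\da$; but $|{\e}^{{\ii}\omega u}|\equiv 1$ regardless of the measure, so the isometry claim holds with no hypothesis on $a(\omega)$ beyond what is already assumed (chromatic, hence $\st{2}$ is a genuine strip and Proposition~\ref{f-phi} and Proposition~\ref{some} are available). Thus the proof is two or three lines.

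\begin{proof}
Fix $\varphi(\omega)\in\LT$ and let $u\in\Rset$ be arbitrary. Since $|{\e}^{{\ii}\omega u}|=1$ for all $\omega\in\Rset$, we have
\[
\noi{\varphi(\omega){\e}^{{\ii}\omega u}}^{2}=\int_{-\infty}^{\infty}
|\varphi(\omega){\e}^{{\ii}\omega u}|^{2}\,\da=
\int_{-\infty}^{\infty}|\varphi(\omega)|^{2}\,\da=\noi{\varphi(\omega)}^{2}<\infty,
\]
so $\varphi(\omega){\e}^{{\ii}\omega u}\in\LT$. As $u\in\Rset\subseteq\st{2}$, Proposition~\ref{some} applies and gives \eqref{ft-expand} together with
$\sum_{n=0}^{\infty}|\K{n}[f_{\varphi}](u)|^{2}=\noi{\varphi(\omega){\e}^{{\ii}\omega u}}^{2}=\noi{\varphi(\omega)}^{2}$.
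Since the last expression does not depend on $u$, the sum $\sum_{n=0}^{\infty}|\K{n}[f_{\varphi}](u)|^{2}$ is independent of $u\in\Rset$.
\end{proof}
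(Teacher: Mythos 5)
Your proof is correct and follows exactly the paper's own argument: the paper's proof of Corollary~\ref{constant} likewise just notes that for real $u$ the factor ${\e}^{{\ii}\omega u}$ is unimodular, so $\varphi(\omega){\e}^{{\ii}\omega u}\in\LT$ with $\noi{\varphi(\omega){\e}^{{\ii}\omega u}}^{2}=\noi{\varphi(\omega)}^{2}$, and then applies Proposition~\ref{some}. You have merely spelled out the same two-line reduction in more detail.
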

\begin{proof}
If $u\in\Rset$, then $\varphi(\omega){\e}^{{\ii}\omega u
}\in\LT$ and $\noi{\varphi(\omega){\e}^{{\ii}\omega u
}}^{2}=\noi{\varphi(\omega)}^{2}$.
\end{proof}

\begin{corollary}\label{sum-squares}
Let $\varepsilon>0$; then for all $u\in S(\frac{1}{2\rho}-\varepsilon)$
\begin{equation}
\sum_{n=0}^{\infty}
|\K{n}[\mm](u)|^2
< \noi{{\e}^{(\frac{1}{2\rho}-\varepsilon) |\omega| }}^2<\infty.
\end{equation}
\end{corollary}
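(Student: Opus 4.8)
The plan is to reduce Corollary~\ref{sum-squares} to Corollary~\ref{constant} by choosing the right $\varphi$. Observe that $\mm(t)$, as defined in \eqref{mm}, is precisely $f_{\varphi}(t)$ with $\varphi(\omega)\equiv 1$; and $1\in\LT$ since $\noi{1}^2=\int_{-\infty}^\infty\da=\mu_0=1<\infty$ by condition~\eqref{c3}. Thus $\mm=f_1$ in the notation of Proposition~\ref{f-phi}, and by Proposition~\ref{anaz} this $f_1$ is analytic on the wider strip $\st{}$, consistent with \eqref{mmz}.

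Next I would apply Proposition~\ref{some} with $\varphi=1$ and a suitable $u$. We need $u\in\st{2}$ and ${\e}^{\ii\omega u}\in\LT$. Fix $\varepsilon>0$ and take any $u\in S(\tfrac{1}{2\rho}-\varepsilon)$, so $|\mathrm{Im}(u)|<\tfrac{1}{2\rho}-\varepsilon<\tfrac{1}{2\rho}$; hence $u\in\st{2}$. Writing $u=x+\ii y$, we have $|{\e}^{\ii\omega u}|={\e}^{-\omega y}\leq {\e}^{|\omega y|}\leq {\e}^{(\frac{1}{2\rho}-\varepsilon)|\omega|}$, and by Lemma~\ref{meuw} (applicable since $\tfrac{1}{2\rho}-\varepsilon<\tfrac{1}{\rho}$) the function ${\e}^{(\frac{1}{2\rho}-\varepsilon)|\omega|}$ lies in $\LT$; therefore $\varphi(\omega){\e}^{\ii\omega u}={\e}^{\ii\omega u}\in\LT$ as well, and $\noi{{\e}^{\ii\omega u}}^2\leq\noi{{\e}^{(\frac{1}{2\rho}-\varepsilon)|\omega|}}^2$. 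Now \eqref{inde} of Proposition~\ref{some} gives
\[
\sum_{n=0}^{\infty}|\K{n}[\mm](u)|^2=\noi{{\e}^{\ii\omega u}}^2\leq\noi{{\e}^{(\frac{1}{2\rho}-\varepsilon)|\omega|}}^2<\infty,
\]
which is exactly the claimed bound.

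There is essentially no hard part here: the corollary is a straightforward specialization. The only point requiring a moment's care is checking the two hypotheses of Proposition~\ref{some}, namely that $u\in\st{2}$ (immediate from the definition of $S(\tfrac{1}{2\rho}-\varepsilon)$) and that ${\e}^{\ii\omega u}\in\LT$ (which follows from the pointwise domination ${\e}^{-\omega y}\leq{\e}^{(\frac{1}{2\rho}-\varepsilon)|\omega|}$ together with Lemma~\ref{meuw}); strictness of the inequality $\tfrac{1}{2\rho}-\varepsilon<\tfrac{1}{\rho}$ is what makes Lemma~\ref{meuw} applicable. If one wished, one could also note that for real $u$ the bound collapses to $\noi{1}^2=1$, recovering a special case of Corollary~\ref{constant}, but the content of Corollary~\ref{sum-squares} is precisely the extension to complex $u$ in the half-width strip.
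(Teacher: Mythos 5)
Your proof is correct and follows essentially the same route as the paper: apply Proposition~\ref{some} with $\varphi\equiv 1$ (so that $f_{\varphi}=\mm$), justify the hypothesis ${\e}^{\ii\omega u}\in\LT$ via Lemma~\ref{meuw}, and then dominate $\noi{{\e}^{\ii\omega u}}$ by $\noi{{\e}^{(\frac{1}{2\rho}-\varepsilon)|\omega|}}$. The only cosmetic difference is that you state the final comparison as ``$\leq$'' where the corollary asserts ``$<$''; the paper gets strictness from $|\mathrm{Im}(u)|<\tfrac{1}{2\rho}-\varepsilon$, a point worth a half-sentence but not a gap.
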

\begin{proof}
Corollary \ref{euw} implies that we can apply Proposition
\ref{some} with $\varphi(\omega)=1$, in which case
$f_{\varphi}(z)=\mm(z)$, and, using Lemma \ref{meuw}, obtain
\begin{equation*}
\sum_{n=0}^{\infty} |\K{n}[\mm](u)|^2=
\noi{{\e}^{{\ii}\omega u}}^{2}\leq
\noi{{\e}^{|\mathrm{Im}(u)\,\omega|}}^{2}<
\noi{{\e}^{(\frac{1}{2\rho}-\varepsilon) |\omega| }}^2<\infty.
\end{equation*}
\end{proof}
\begin{definition}
$\LB$ is the vector space of functions $f:\st{2}
\rightarrow \Cset$ which are analytic on $\st{2}$ and
satisfy $\sum_{n=0}^{\infty}|\K{n}[f](0)|^2<\infty$.
\end{definition}

\begin{proposition}\label{bijection}
The mapping
\begin{equation}\label{ft0}
f(z)\mapsto \varphi_f(\omega)=\sum_{n=0}^{\infty}
(-{\ii})^{n}\K{n}[f](0)\,\PP{n}{\omega}
\end{equation}
is an isomorphism between the vector spaces $\LB$ and $\LT$,
and its inverse is given by \eqref{f-int}.
\end{proposition}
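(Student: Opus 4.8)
The plan is to show that the map $f\mapsto\varphi_f$ in \eqref{ft0} is well-defined, linear, injective, surjective, and has \eqref{f-int} as its inverse, assembling the pieces already established above. First I would check well-definedness: for $f\in\LB$ the sequence $c_n=(-\ii)^n\K{n}[f](0)$ satisfies $\sum_n|c_n|^2=\sum_n|\K{n}[f](0)|^2<\infty$ by definition of $\LB$, so since $\PPP$ is a complete orthonormal system in $\LT$ (Lemma~\ref{complete}), the series $\sum_n c_n\PP{n}{\omega}$ converges in $\LT$ to some $\varphi_f$, and $\noi{\varphi_f}^2=\sum_n|\K{n}[f](0)|^2$ by Parseval. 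Linearity in $f$ is immediate since each $f\mapsto\K{n}[f](0)$ is linear.

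Next I would show the map $\varphi\mapsto f_\varphi$ of \eqref{f-int} lands in $\LB$ and is a two-sided inverse. Given $\varphi\in\LT$, Proposition~\ref{f-phi} gives that $f_\varphi$ is analytic on $\st{2}$ and \eqref{fourier-int-der} holds; Corollary~\ref{constant} (the case $u=0$) gives $\sum_n|\K{n}[f_\varphi](0)|^2=\noi{\varphi}^2<\infty$, so $f_\varphi\in\LB$. For the composition $\varphi\mapsto f_\varphi\mapsto\varphi_{f_\varphi}$: by \eqref{proj} with $u=0$ we have $(-\ii)^n\K{n}[f_\varphi](0)=\doti{\varphi(\omega)}{\PP{n}{\omega}}$, so $\varphi_{f_\varphi}=\sum_n\doti{\varphi}{\PP{n}{\cdot}}\PP{n}{\omega}$, which equals $\varphi$ in $\LT$ because $\PPP$ is a complete orthonormal system. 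This simultaneously proves injectivity of $f\mapsto\varphi_f$ is not yet needed — it shows surjectivity of $f\mapsto\varphi_f$ onto $\LT$ and that \eqref{f-int} is a right inverse.

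For the other composition $f\mapsto\varphi_f\mapsto f_{\varphi_f}$ I need to verify $f_{\varphi_f}=f$ as functions on $\st{2}$. Here is where I expect the main obstacle: a priori $\LB$ is defined only by analyticity on $\st{2}$ plus square-summability of the chromatic derivatives at $0$, and we must recover the \emph{values} of $f$ everywhere on the strip from the data $\{\K{n}[f](0)\}$. The natural route is to show that both $f$ and $f_{\varphi_f}$ are analytic on $\st{2}$ and have the same chromatic derivatives at $0$ — namely $\K{n}[f_{\varphi_f}](0)=\K{n}[f](0)$ for all $n$, which follows from the first composition computation applied with $\varphi=\varphi_f$ together with \eqref{proj}. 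Then, since $\K{n}=\sum_{k\le n}\K{n}[t^k/k!](0)\,\dd^k$ by \eqref{direct} with leading coefficient a nonzero multiple of $\dd^n$ (the coefficient of $\omega^n$ in $\PP{n}{\omega}$ is $1/(\gamma_0\cdots\gamma_{n-1})\neq 0$), one inverts this triangular system to conclude $f^{(n)}(0)=f_{\varphi_f}^{(n)}(0)$ for all $n$; analyticity on the connected strip $\st{2}$ then forces $f=f_{\varphi_f}$ by the identity theorem.

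Finally I would record that linearity of the inverse map \eqref{f-int} is clear from linearity of the integral, so both maps are vector-space isomorphisms, completing the proof. The only genuinely delicate point is the triangular-inversion/identity-theorem step recovering $f$ from its chromatic-derivative data; everything else is bookkeeping over results already proved, chiefly Lemma~\ref{complete}, Proposition~\ref{f-phi}, Corollary~\ref{constant} and equations \eqref{proj} and \eqref{direct}.
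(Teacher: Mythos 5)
Your proof is correct and follows essentially the same route as the paper's: well-definedness of $f\mapsto\varphi_f$ via completeness of $\PPP$ in $\LT$, Proposition~\ref{f-phi} and Proposition~\ref{some} (Corollary~\ref{constant}) to show \eqref{f-int} lands in $\LB$ and inverts \eqref{ft0}, and equality of the chromatic derivatives at $0$ to identify $f$ with $f_{\varphi_f}$. The only difference is that you spell out the final identification step — inverting the triangular system \eqref{direct} to pass from $\K{n}[f](0)=\K{n}[f_{\varphi_f}](0)$ to $f^{(n)}(0)=f_{\varphi_f}^{(n)}(0)$ and then invoking the identity theorem on the connected strip — which the paper leaves implicit in its closing ``Thus, $f(z)=f_{\varphi_f}(z)$.''
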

\begin{proof} Let $f\in\LB$; since $\sum_{n=0}^{\infty} |\K{n}[f](0)|^2<\infty$,
the function $\varphi_f(\omega)$ defined by \eqref{ft0} belongs
to $\LT$. By Proposition~\ref{f-phi}, $f_{\varphi_f}$ defined
from $\varphi_f$ by \eqref{f-int} is analytic on $\st{2}$ and
by Proposition~\ref{some} it satisfies
$\varphi_f(\omega)=\sum_{n=0}^{\infty}(-{\ii})^{n}
\K{n}[f_{\varphi_f}](0)\,\PP{n}{\omega}.$ By the uniqueness of
the Fourier expansion of $\varphi_f(\omega)$ with respect to
the system \PPP\ we have $\K{n}[f](0)=\K{n}[f_{\varphi_f}](0)$
for all $n$. Thus, $f(z)=f_{\varphi_f}(z)$ for all $z\in
\st{2}$.
\end{proof}
Proposition \ref{bijection} and Corollary \ref{constant}
imply the following Corollary.
\begin{corollary}\label{independent}
For all $f\in\LB$ and all $t\in\Rset$ the sum
$\sum_{n=0}^{\infty}|\K{n}[f](t)|^2$ converges and is
independent of $t$.
\end{corollary}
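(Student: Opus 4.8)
The plan is to combine Proposition~\ref{bijection} with Corollary~\ref{constant}, which between them already contain all the analytic content; the Corollary is then a matter of tracking one function through the isomorphism. First I would take an arbitrary $f\in\LB$ and invoke Proposition~\ref{bijection} to produce the function $\varphi_f\in\LT$ given by the series \eqref{ft0}, together with the identity $f(z)=f_{\varphi_f}(z)$ for all $z\in\st{2}$, where $f_{\varphi_f}$ is defined from $\varphi_f$ via \eqref{f-int}. Since $\Rset\subset\st{2}$, this yields $\K{n}[f](t)=\K{n}[f_{\varphi_f}](t)$ for every $n$ and every real $t$.

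Next I would apply Corollary~\ref{constant} to the function $\varphi=\varphi_f$. That corollary asserts precisely that for every $u\in\Rset$ the sum $\sum_{n=0}^{\infty}|\K{n}[f_{\varphi_f}](u)|^2$ converges and equals $\noi{\varphi_f(\omega)}^2$, a fixed finite number independent of $u$ (finite because $\varphi_f\in\LT$). Rewriting $\K{n}[f_{\varphi_f}](u)$ as $\K{n}[f](u)$ by the first step gives $\sum_{n=0}^{\infty}|\K{n}[f](t)|^2=\noi{\varphi_f(\omega)}^2$ for all $t\in\Rset$, which is exactly the claim: the series converges, and its value does not depend on $t$.

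There is essentially no obstacle here, since the substantive work was done in Proposition~\ref{some}, Corollary~\ref{constant} and the bijection of Proposition~\ref{bijection}; the only point worth stating explicitly is the bookkeeping: the hypothesis $\sum_{n=0}^{\infty}|\K{n}[f](0)|^2<\infty$ defining membership in $\LB$ is exactly what makes $\varphi_f$ a genuine element of $\LT$, so that Corollary~\ref{constant} applies, and the equality $\K{n}[f](0)=\K{n}[f_{\varphi_f}](0)$ (built into Proposition~\ref{bijection}) identifies the hypothesized series with the quantity $\noi{\varphi_f(\omega)}^2$ appearing in the conclusion.
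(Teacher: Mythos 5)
Your proposal is correct and follows exactly the route the paper intends: the paper derives this Corollary by combining Proposition~\ref{bijection} with Corollary~\ref{constant}, which is precisely the argument you spell out. Your explicit bookkeeping (identifying $f$ with $f_{\varphi_f}$ on $\st{2}\supset\Rset$ and noting that the $\LB$ condition is what places $\varphi_f$ in $\LT$) just fills in the details the paper leaves implicit.
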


\begin{definition}
For every $f(z)\in\LB$ we call the corresponding
$\varphi_f(\omega)\in \LT$ given by equation \eqref{ft0} the
\mom-Fourier-Stieltjes transform of $f(z)$ and denote it by
$\FT[f](\omega)$.
\end{definition}

Assume that $ a (\omega)$ is absolutely continuous; then
$a^\prime(\omega)= \wght(\omega)$ almost everywhere for some
non-negative weight function $\wght(\omega)$. Then
\eqref{f-int} implies
\begin{equation*}
f(z)=\int_{-\infty}^{\infty}\FT[f](\omega)
\;{\e}^{\ii\omega
z}\wght(\omega)\;{\rm d}\omega.
\end{equation*}
This implies the following Proposition.
\begin{proposition}[\!\!\cite{IG5}]\label{weight-space}
Assume that a function $f(z)$ is analytic on the strip $\st{2}$
and that it has a Fourier transform
$\widehat{f}(\omega)=\int_{-\infty}^{\infty}f(t)\,
{\e}^{-\ii\omega t} {\rm d} t$ such that
$f(z)=\frac{1}{2\pi}\int_{-\infty}^{\infty}
\widehat{f}(\omega)\;{\e}^{\ii\omega z}\;{\rm d}\omega$ for all
$z\in\st{2}$; then $f(z)\in\LB$ if and only if
$\int_{-\infty}^{\infty}|\widehat{f}(\omega)|^2\;
\wght(\omega)^{-1}\;{\rm d}\omega<\infty,$ in which case
$\widehat{f}(\omega)=2\pi\; \FT[f](\omega)\wght(\omega)$.
\end{proposition}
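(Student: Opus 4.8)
The plan is to deduce everything from the isomorphism $\LB\cong\LT$ of Proposition~\ref{bijection}, together with the representation $f(z)=\int_{-\infty}^{\infty}\FT[f](\omega)\,{\e}^{\ii\omega z}\,\wght(\omega)\,{\rm d}\omega$ for $f\in\LB$ recorded just before the statement (valid because $a(\omega)$ is absolutely continuous, so $\da=\wght(\omega)\,{\rm d}\omega$). Two elementary observations will be used repeatedly. First, $\int_{-\infty}^{\infty}\wght(\omega)\,{\rm d}\omega=\mu_0=1$, so Cauchy--Schwarz gives $\int_{-\infty}^{\infty}|\psi(\omega)|\,\wght(\omega)\,{\rm d}\omega\leq\noi{\psi}$ for every $\psi\in\LT$; in particular $\psi\,\wght\in L^1(\Rset)$. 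Second, if $\int_{-\infty}^{\infty}|\widehat{f}(\omega)|^2\,\wght(\omega)^{-1}\,{\rm d}\omega<\infty$, then $\widehat{f}$ vanishes a.e.\ on $\{\wght=0\}$, so $\psi_f(\omega):=\widehat{f}(\omega)/(2\pi\,\wght(\omega))$ is a well-defined element of $\LT$ (the set $\{\wght=0\}$ is $a$-null, hence invisible to $\noi{\cdot}$) with $\noi{\psi_f}^2=(2\pi)^{-2}\int_{-\infty}^{\infty}|\widehat{f}(\omega)|^2\,\wght(\omega)^{-1}\,{\rm d}\omega$.

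For the ``if'' direction, assume $\int_{-\infty}^{\infty}|\widehat{f}(\omega)|^2\,\wght(\omega)^{-1}\,{\rm d}\omega<\infty$ and form $\psi_f$ as above. Since $\psi_f\in\LT$, Proposition~\ref{f-phi} shows $f_{\psi_f}(z)=\int_{-\infty}^{\infty}\psi_f(\omega)\,{\e}^{\ii\omega z}\,\da=\frac{1}{2\pi}\int_{-\infty}^{\infty}\widehat{f}(\omega)\,{\e}^{\ii\omega z}\,{\rm d}\omega$ is analytic on $\st{2}$, and the inversion hypothesis on $f$ forces $f_{\psi_f}=f$ on $\st{2}$, hence $\K{n}[f](0)=\K{n}[f_{\psi_f}](0)$ for all $n$. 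By Corollary~\ref{constant} (at $u=0$), $\sum_{n=0}^{\infty}|\K{n}[f](0)|^2=\noi{\psi_f}^2<\infty$, so $f\in\LB$; and since the inverse of the map in Proposition~\ref{bijection} is $\psi\mapsto f_{\psi}$, the identity $f=f_{\psi_f}$ yields $\FT[f]=\psi_f$, i.e.\ $\widehat{f}(\omega)=2\pi\,\FT[f](\omega)\,\wght(\omega)$.

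For the ``only if'' direction, assume $f\in\LB$. Proposition~\ref{bijection} gives $\FT[f]\in\LT$ and $f(z)=\int_{-\infty}^{\infty}\FT[f](\omega)\,{\e}^{\ii\omega z}\,\wght(\omega)\,{\rm d}\omega$ on $\st{2}$. Restricting to real $z=t$, $f$ is the inverse Fourier transform of $2\pi\,\FT[f](\omega)\,\wght(\omega)$, which lies in $L^1(\Rset)$ by the Cauchy--Schwarz bound; since $f\in L^1(\Rset)$ (implicit in the hypothesis that $\widehat{f}(\omega)=\int_{-\infty}^{\infty}f(t)\,{\e}^{-\ii\omega t}\,{\rm d}t$), Fourier inversion identifies $\widehat{f}(\omega)=2\pi\,\FT[f](\omega)\,\wght(\omega)$ a.e., and substituting gives $\int_{-\infty}^{\infty}|\widehat{f}(\omega)|^2\,\wght(\omega)^{-1}\,{\rm d}\omega=(2\pi)^2\,\noi{\FT[f]}^2<\infty$. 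The only step that is not just unwinding the earlier definitions is this identification of $\widehat{f}$ with $2\pi\,\FT[f]\,\wght$ from two competing integral representations of $f$; it is mild and rests on the classical uniqueness/inversion theorem for the Fourier transform on $L^1(\Rset)$, with the needed $L^1$-integrability supplied by Cauchy--Schwarz and $\int\wght\,{\rm d}\omega=1$. The one point requiring slight care is the zero set of $\wght$, but it is $a$-null and therefore affects neither membership in $\LT$ nor the value of $\noi{\cdot}$.
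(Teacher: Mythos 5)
Your proof is correct and follows essentially the same route as the paper, which derives the representation $f(z)=\int_{-\infty}^{\infty}\FT[f](\omega)\,{\e}^{\ii\omega z}\wght(\omega)\,{\rm d}\omega$ from \eqref{f-int} and then simply asserts ``this implies the following Proposition.'' You have merely filled in the details the paper leaves implicit --- the Fourier-uniqueness identification $\widehat{f}=2\pi\,\FT[f]\,\wght$, the norm computation via Proposition~\ref{bijection} and Corollary~\ref{constant}, and the (correctly handled) issue of the zero set of $\wght$.
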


\subsection{Uniform convergence of chromatic expansions}

The Shannon expansion of an $f\in\BL{\pi}$ is obtained by
representing its Fourier transform $\widehat{f}(\omega)$ as
series of the trigonometric polynomials; similarly, the
chromatic expansion of an $f\in\LB$ is obtained by representing
$\FT[f](\omega)$ as a series of orthogonal polynomials \PPP.
\begin{proposition}\label{unif-con}
Assume $f\in\LB$; then for all $u\in\Rset$ and $\varepsilon>0$,
the chromatic series $\CE[f,u](z)$ of $f(z)$ converges to
$f(z)$ uniformly on the  strip
$S(\frac{1}{2\rho}-\varepsilon)$.
\end{proposition}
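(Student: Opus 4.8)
The plan is to transfer the problem to the space $\LT$, where the expansion in question is simply a Fourier expansion in a complete orthonormal system and hence converges, and then to carry the error estimate back through the integral representation of $f$ by a Cauchy--Schwarz inequality whose right-hand side can be made uniform over the contracted strip.

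First I would invoke Proposition~\ref{bijection}: since $f\in\LB$, we have $f=f_{\varphi_f}$ with $\varphi_f=\FT[f]\in\LT$, so that
\[
f(z)=\int_{-\infty}^{\infty}\varphi_f(\omega)\,{\e}^{\ii\omega z}\,\da \qquad (z\in\st{2}),
\]
and, applying Proposition~\ref{f-phi} with $\varphi(\omega)\equiv 1$ (for which $f_\varphi=\mm$), $\K{k}[\mm](w)=\int_{-\infty}^{\infty}\ii^{k}\PP{k}{\omega}\,{\e}^{\ii\omega w}\,\da$ for every $w\in\st{2}$. Now fix $u\in\Rset$ and set $\psi(\omega)=\varphi_f(\omega)\,{\e}^{\ii\omega u}$. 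Since $u$ is real, $\psi\in\LT$ with $\noi{\psi}=\noi{\varphi_f}$, so Corollary~\ref{constant} gives
\[
\psi(\omega)=\sum_{k=0}^{\infty}(-\ii)^{k}\,\K{k}[f](u)\,\PP{k}{\omega}
\]
with convergence in $\LT$; write $S_n$ for its partial sums, so that $\noi{\psi-S_n}\to 0$.

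For $z\in\st{2}$ one has $z-u\in\st{2}$, so the integral formula for $\K{k}[\mm]$ is available, and since $(-1)^{k}\ii^{k}=(-\ii)^{k}$ the chromatic approximation can be rewritten (interchanging a finite sum with the integral) as
\[
\CA[f,n,u](z)=\sum_{k=0}^{n}(-1)^{k}\,\K{k}[f](u)\,\K{k}[\mm](z-u)=\int_{-\infty}^{\infty}S_n(\omega)\,{\e}^{\ii\omega(z-u)}\,\da .
\]
Because $\psi(\omega)\,{\e}^{\ii\omega(z-u)}=\varphi_f(\omega)\,{\e}^{\ii\omega z}$, subtracting the two integral representations and applying Cauchy--Schwarz in $\LT$ yields
\[
|f(z)-\CA[f,n,u](z)|=\Big|\int_{-\infty}^{\infty}(\psi-S_n)(\omega)\,{\e}^{\ii\omega(z-u)}\,\da\Big|\le \noi{\psi-S_n}\cdot\noi{{\e}^{\ii\omega(z-u)}} .
\]

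The remaining step, and the only genuinely delicate one, is to bound $\noi{{\e}^{\ii\omega(z-u)}}$ uniformly over the strip. For $z\in S(\frac{1}{2\rho}-\varepsilon)$ we have $|\mathrm{Im}(z-u)|=|\mathrm{Im}(z)|<\frac{1}{2\rho}-\varepsilon$, hence
\[
\noi{{\e}^{\ii\omega(z-u)}}^{2}=\int_{-\infty}^{\infty}{\e}^{-2\omega\,\mathrm{Im}(z)}\,\da\le\int_{-\infty}^{\infty}{\e}^{(\frac{1}{\rho}-2\varepsilon)|\omega|}\,\da=:C_{\varepsilon},
\]
and $C_{\varepsilon}<\infty$ by Lemma~\ref{meuw} since $\frac{1}{\rho}-2\varepsilon<\frac{1}{\rho}$. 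Therefore $\sup_{z\in S(\frac{1}{2\rho}-\varepsilon)}|f(z)-\CA[f,n,u](z)|\le\sqrt{C_{\varepsilon}}\;\noi{\psi-S_n}\to 0$ as $n\to\infty$, which is exactly the asserted uniform convergence of $\CE[f,u](z)$ to $f(z)$ on $S(\frac{1}{2\rho}-\varepsilon)$ (the cases $\varepsilon\ge\frac{1}{2\rho}$ being vacuous, the strip being then empty). Everything outside this uniform bound — the passage to $\LT$, the Fourier expansion of $\psi$, and the rewriting of $\CA[f,n,u]$ as an integral — is routine once Proposition~\ref{f-phi}, Corollary~\ref{constant} and Lemma~\ref{meuw} are in hand.
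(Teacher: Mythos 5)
Your proof is correct and follows essentially the same route as the paper's: both pass to $\LT$ via Proposition~\ref{bijection}, identify the partial sums of the chromatic series with $\int S_n(\omega)\e^{\ii\omega(z-u)}\da$ using the integral representation of $\K{k}[\mm]$, and close with Cauchy--Schwarz together with the bound $\noi{\e^{\ii\omega(z-u)}}^2\le\int\e^{(\frac{1}{\rho}-2\varepsilon)|\omega|}\da<\infty$ from Lemma~\ref{meuw}. The only cosmetic difference is that the paper writes $\noi{\psi-S_n}^2$ explicitly as the tail $\sum_{k=n+1}^{\infty}|\K{k}[f](u)|^2$ and cites Corollary~\ref{independent}, whereas you appeal directly to Parseval convergence in $\LT$.
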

\begin{proof}
Assume $u\in\Rset$; by applying \eqref{fourier-int-der} to
$\mm(z-u)$ we get that for all $z\in\st{2}$,
\begin{equation}\label{exp-CA}\CA[f,n,u](z)=
\int_{-\infty}^{\infty}\sum_{k=0}^{n}(-{\ii})^{k}
\K{k}[f](u)\,
\PP{k}{\omega}{\e}^{\ii \omega (z-u)}\da.
\end{equation}

Since $f\in\LB$, Proposition \ref{bijection} implies
$\FT[f](\omega)\in\LT$. Corollary~\ref{constant} and equation
\eqref{ft-expand} imply that in $\LT$
\[
\FT[f](\omega)\;{\e}^{\ii\omega u}=
\sum_{k=0}^{\infty}(-{\ii})^{k}
\K{k}[f](u)\,\PP{k}{\omega}.
\]
Thus,
\begin{eqnarray}\label{f-ex}
f(z)=\int_{-\infty}^{\infty}\sum_{k=0}^{\infty}(-{\ii})^{k}
\K{k}[f](u)\,\PP{k}{\omega}{\e}^{\ii \omega (z-u)}\da.
\end{eqnarray}
Consequently, from \eqref{exp-CA} and \eqref{f-ex},
\begin{eqnarray*}
 |f(z)-\CA[f,n,u](z)|\\
&&\hspace*{-50mm}\leq\int_{-\infty}^{\infty}\left|
\sum_{k=n+1}^{\infty}(-{\ii})^{k}
\K{k}[f](u)\,\PP{k}{\omega}{\e}^{\ii \omega (z-u)}
\right|\da\nonumber\\
&&\hspace*{-50mm}\leq\left(\!\int_{-\infty}^{\infty}\!
\left|\sum_{k=n+1}^{\infty}(-{\ii})^{n}
\K{n}[f](u)\,\PP{n}{\omega}\right|^2\!\da
\int_{-\infty}^{\infty}|{\e}^{\ii\omega(z-u)}|^2\da\!\right)^{1/2}
\end{eqnarray*}

For $z\in S(\frac{1}{2\rho}-\varepsilon)$ we have
\begin{equation}\label{convergence}
|f(z)-\CA[f,n,u](z)|\leq\!\left(\sum_{k=n+1}^{\infty}|
\K{n}[f](u)|^2
\!\!\int_{-\infty}^{\infty}
{\e}^{(\frac{1}{\rho}-2\varepsilon)|\omega|}\da\right)^{1/2}.
\end{equation}
Consequently, Lemma \ref{meuw} and Corollary \ref{independent}
imply that $\CE[f,u](z)$ converges to $f(z)$ uniformly on
$S(\frac{1}{2\rho}-\varepsilon)$.
\end{proof}

\begin{proposition}\label{rep} Space $\LB$ consists precisely
of functions of the form
$f(z)=\sum_{n=0}^{\infty}a_{n} \K{n}[\mm](z)$
where $a=\Langle
a_n\Rangle_{n\in\Nset}$ is a complex sequence in $l^2$.
\end{proposition}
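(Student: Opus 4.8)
The plan is to deduce the statement from the isomorphism $\FT\colon\LB\to\LT$ of Proposition~\ref{bijection}, once the images of the functions $\K{n}[\mm]$ under $\FT$ have been identified. So the first step is to apply \eqref{fourier-int-der} with $\varphi(\omega)\equiv 1$ (for which $f_{\varphi}=\mm$), obtaining $\K{n}[\mm](z)=\int_{-\infty}^{\infty}\ii^{n}\,\PP{n}{\omega}\,\e^{\ii\omega z}\,\da$ for every $z\in\st{2}$. Thus $\K{n}[\mm]=f_{\varphi_{n}}$ with $\varphi_{n}(\omega)=\ii^{n}\PP{n}{\omega}\in\LT$, so by Proposition~\ref{bijection} each $\K{n}[\mm]$ belongs to $\LB$ and $\FT[\K{n}[\mm]](\omega)=\ii^{n}\PP{n}{\omega}$; one can also check directly, using \eqref{orthonorm}, that $\sum_{k}|\K{k}[\K{n}[\mm]](0)|^{2}=1$. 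Since $\PPP$ is a complete orthonormal system in $\LT$ (Lemma~\ref{complete}), so is $\{\ii^{n}\PP{n}{\omega}\}_{n\in\Nset}$, and the proposition should follow by transporting Fourier--Stieltjes expansions through $\FT^{-1}$.

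For the inclusion ``$\supseteq$'' I would take $a=\Langle a_{n}\Rangle_{n\in\Nset}\in l^{2}$, let $\psi(\omega)=\sum_{n=0}^{\infty}a_{n}\ii^{n}\PP{n}{\omega}$, which converges in $\LT$ by Riesz--Fischer with $\noi{\psi}^{2}=\sum_{n}|a_{n}|^{2}$, and set $f=f_{\psi}$ as in \eqref{f-int}; then $f\in\LB$ by Proposition~\ref{bijection}. Writing $\psi_{N}$ for the $N$-th partial sum of $\psi$, linearity of the integral together with the first step gives $f_{\psi_{N}}(z)=\sum_{n=0}^{N}a_{n}\K{n}[\mm](z)$, while Cauchy--Schwarz gives
\[
|f(z)-f_{\psi_{N}}(z)|=\left|\int_{-\infty}^{\infty}(\psi-\psi_{N})(\omega)\,\e^{\ii\omega z}\,\da\right|\le\noi{\psi-\psi_{N}}\;\noi{\e^{\ii\omega z}},
\]
which tends to $0$ for each fixed $z\in\st{2}$ since $\e^{\ii\omega z}\in\LT$ by Corollary~\ref{euw}; moreover $\noi{\e^{\ii\omega z}}$ is bounded on each strip $S(\frac{1}{2\rho}-\varepsilon)$ by Lemma~\ref{meuw}, exactly as in the proof of Proposition~\ref{unif-con}, so the convergence is uniform there. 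Hence $f(z)=\sum_{n=0}^{\infty}a_{n}\K{n}[\mm](z)$.

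For the inclusion ``$\subseteq$'' I would take $f\in\LB$ and set $a_{n}=(-1)^{n}\K{n}[f](0)$; then $a=\Langle a_{n}\Rangle_{n\in\Nset}\in l^{2}$ because $|a_{n}|=|\K{n}[f](0)|$ and $\sum_{n}|\K{n}[f](0)|^{2}<\infty$. By \eqref{ft0} and the identity $(-\ii)^{n}=(-1)^{n}\ii^{n}$,
\[
\FT[f](\omega)=\sum_{n=0}^{\infty}(-\ii)^{n}\K{n}[f](0)\,\PP{n}{\omega}=\sum_{n=0}^{\infty}a_{n}\,\ii^{n}\PP{n}{\omega}=\sum_{n=0}^{\infty}a_{n}\,\FT[\K{n}[\mm]](\omega).
\]
Since $\FT^{-1}$ sends an arbitrary $l^{2}$-combination $\sum_{n}a_{n}\ii^{n}\PP{n}{\omega}$ to $\sum_{n}a_{n}\K{n}[\mm]$ by the previous paragraph, and here $\psi:=\sum_{n}a_{n}\ii^{n}\PP{n}{\omega}=\FT[f]$ with $f_{\FT[f]}=f$ by Proposition~\ref{bijection}, we get $f(z)=\sum_{n=0}^{\infty}a_{n}\K{n}[\mm](z)$, which is precisely $\CE[f,0](z)$. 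If one wants uniqueness of the coefficients, apply $\K{m}[\,\cdot\,](0)$ termwise to this series and use \eqref{orthonorm} to recover $a_{m}=(-1)^{m}\K{m}[f](0)$.

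The only point that needs real care is the passage from $\LT$-convergence of the Fourier--Stieltjes series to pointwise (and locally uniform) convergence of $\sum_{n}a_{n}\K{n}[\mm](z)$ as functions analytic on $\st{2}$; this is handled by the Cauchy--Schwarz estimate in the second paragraph and relies on the integrability $\e^{\ii\omega z}\in\LT$ throughout $\st{2}$ from Corollary~\ref{euw} and Lemma~\ref{meuw}. Everything else is bookkeeping with the dictionary $\FT[\K{n}[\mm]](\omega)=\ii^{n}\PP{n}{\omega}$ provided by Propositions~\ref{f-phi} and~\ref{bijection}.
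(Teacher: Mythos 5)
Your proof is correct, but it is organized differently from the paper's. The paper handles the inclusion ``every $l^2$ combination lies in $\LB$'' directly on the time side: it bounds $\sum_{n\ge k}|a_n\K{n}[\mm](z)|$ by Cauchy--Schwarz against Corollary~\ref{sum-squares} (the uniform bound on $\sum_n|\K{n}[\mm](z)|^2$ over strips $S(\frac{1}{2\rho}-\varepsilon)$), concludes absolute and uniform convergence and hence analyticity, and then reads off $\K{m}[f](0)=(-1)^m a_m$ by applying $\K{m}$ termwise via \eqref{orthonorm}; for the converse it simply cites Proposition~\ref{unif-con}. You instead route both inclusions through the isomorphism $\FT$ of Proposition~\ref{bijection}, using the dictionary $\FT[\K{n}[\mm]]=\ii^n\PP{n}{\omega}$ and transporting the orthonormal expansion of $\FT[f]$ in $\LT$ back to $\LB$ via the Cauchy--Schwarz estimate $|f(z)-f_{\psi_N}(z)|\leq\noi{\psi-\psi_N}\,\noi{\e^{\ii\omega z}}$. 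The two arguments rest on the same analytic input (Lemma~\ref{meuw} and Corollary~\ref{euw} supplying $\e^{\ii\omega z}\in\LT$ uniformly on substrips), so neither is more general; what your version buys is that analyticity of the sum comes for free from Proposition~\ref{f-phi} and, more notably, that you never need to justify interchanging the differential operator $\K{m}$ with an infinite sum --- a step the paper's computation of $\K{m}[f](0)$ asserts without comment and which strictly speaking needs uniform convergence of the differentiated series (or Weierstrass's theorem on locally uniform limits of analytic functions). The price is that you partially re-derive the content of Proposition~\ref{unif-con} at $u=0$ rather than invoking it, so your write-up is longer than the paper's two-line appeal.
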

\begin{proof}
Assume $a\in l^2$; by Proposition \ref{sum-squares}, for every
$\varepsilon>0$, if $z\in S(\frac{1}{2\rho}-\varepsilon)$ then
\begin{eqnarray*}
\sum_{n=k}^{\infty}|a_{n}\K{n}[\mm](z)|&\leq&
\left(\sum_{n=k}^{\infty}|a_{n}|^2\sum_{n=k}^{\infty}
|\K{n}[\mm](z)|^2\right)^{1/2}\\
&\leq &\left(\sum_{n=k}^{\infty}|a_{n}|^2\right)^{1/2}
\noi{{\e}^{(\frac{1}{2\rho}-\varepsilon) |\omega| }},
\end{eqnarray*}
which implies that the series converges absolutely and uniformly on
$S(\frac{1}{2\rho}-\varepsilon)$. Consequently,
$f(z)=\sum_{n=0}^{\infty}a_{n} \K{n}[\mm](z)$ is analytic on
$\st{2}$, and
\begin{eqnarray*}
\K{m}[f](0)=
\sum_{n=0}^{\infty}a_n(\K{m}\circ\K{n})[\mm](0)=(-1)^ma_m.
\end{eqnarray*}
Thus, $\sum_{n=0}^{\infty} |\K{n}[f](0)|^2=\sum_{n=0}^{\infty}
|a_n|^2<\infty$ and so $f(z)\in \LB$.  Proposition
\ref{unif-con} provides the opposite direction.
\end{proof}
Note that Proposition \ref{rep} implies that for every
$\varepsilon>0$ functions $f(z)\in\LB$ are bounded on the strip
$S(\frac{1}{2\rho}-\varepsilon)$ because
\begin{equation*}
|f(z)|\leq \left(\sum_{n=0}^{\infty}|\K{n}[f](0)|^2\right)^{1/2}
\noi{{\e}^{(\frac{1}{2\rho}-\varepsilon) |\omega|}}.
\end{equation*}

\subsection{A function space with a locally defined scalar product}

\begin{definition}
$\LL$ is the space of functions $f(t): \Rset \mapsto \Cset$
obtained from functions in
$\LB$ by restricting their domain to $\Rset$.
\end{definition}

Assume that $f,g\in\LL$; then \eqref{proj} implies that for all
$u\in\Rset$,
\begin{eqnarray*}
\sum_{n=0}^{\infty} \K{n}[f](u)\overline{\K{n}[g](u)}&=&
\doti{\FT[f](\omega){\e}^{{\ii}\omega u}}{\FT[g](\omega)
{\e}^{{\ii}\omega u}}\\
&=&\doti{\FT[f](\omega)}{\FT[g](\omega)}.
\end{eqnarray*}

Note that for all $t,u\in \Rset$,
\begin{eqnarray*}
&&\sum_{n=0}^{n}\K{k}[f](u)\,\K{k}_u[g(t-u)]\\
&&\hspace*{10mm}=\int_{-\infty}^{\infty}\sum_{k=0}^{n}
\K{k}[f](u)\,({-\ii})^{k}\, \PP{k}{\omega}\ \FT[g](\omega)\,
{\e}^{{\ii}\omega (t-u)}\da.
\end{eqnarray*}
By \eqref{ft-expand}, the sum $\sum_{k=0}^{n}({-\ii})^{k}
\K{k}[f](u)\, \PP{k}{\omega}$ converges in $\LT$ to
$\FT[f](\omega)\;{\e}^{{\ii}\omega u}$. Since $\FT[g](\omega)\,
{\e}^{{\ii}\omega (t-u)}\in\LT$, and since
\[\left|\int_{-\infty}^{\infty}\FT[g](\omega)\,
\FT[f](\omega)\,{\e}^{{\ii}\omega
t}\da\right|<\noi{\FT[f](\omega)}\noi{\FT[g](\omega)},\]
we have that for all  $t,u\in \Rset$,
\begin{equation*}
\sum_{k=0}^{\infty}\K{k}[f](u)\, \K{k}_u[g(t-u)]
=\int_{-\infty}^{\infty}\FT[g](\omega)\,
\FT[f](\omega)\,{\e}^{{\ii}\omega t}\da<\infty.
\end{equation*}

\begin{proposition}[\!\!\cite{IG5}]\label{local-space-gen}
We can introduce locally defined scalar product, an associated
norm and a convolution of functions in \LL\ by the following
sums which are independent of $u\in\Rset:$
\begin{eqnarray}
\norm{f}^2&=&\sum_{n=0}^{\infty}|K^n[f](u)|^2=
\noi{\FT[f](\omega)}^2;\label{nor}\\
\Mscal{f}{g}&=&\sum_{n=0}^{\infty}K^n[f](u)\overline{K^n[g](u)}\\
&=&\doti{\FT[f](\omega)}{\FT[g](\omega)};\label{scl}\nonumber\\
(f \ast_{\Mi} g)(t)&=&\sum_{n=0}^{\infty}K^n[f](u)
K^n_u[g(t-u)]\label{convolution}\\
&=&\int_{-\infty}^{\infty}
{\FT[f](\omega)}\;{\FT[g](\omega)}{\e}^{{\ii}\omega t}
\da.\nonumber
\end{eqnarray}
\end{proposition}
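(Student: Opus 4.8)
The plan is to verify that each of the three displayed sums is well defined, independent of $u\in\Rset$, and equals the asserted integral, by reducing everything to the $\mom$-Fourier-Stieltjes picture established in Proposition~\ref{bijection} and the identity \eqref{proj}. First I would treat the norm formula \eqref{nor}: for $f\in\LL$ its extension lies in $\LB$, so $\FT[f](\omega)\in\LT$ by Proposition~\ref{bijection}; then \eqref{proj} with $u\in\Rset$ gives $\doti{\FT[f](\omega)\e^{\ii\omega u}}{\PP{n}{\omega}} = (-\ii)^n\K{n}[f](u)$, and since for real $u$ multiplication by $\e^{\ii\omega u}$ is an isometry of $\LT$, Parseval's theorem (exactly as in Corollary~\ref{constant}) yields $\sum_n|\K{n}[f](u)|^2 = \noi{\FT[f](\omega)\e^{\ii\omega u}}^2 = \noi{\FT[f](\omega)}^2$, which is manifestly independent of $u$. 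The scalar-product formula \eqref{scl} follows by the same polarization: $\sum_n\K{n}[f](u)\overline{\K{n}[g](u)}$ is the $\LT$ inner product of the two Fourier-Stieltjes expansions $\FT[f](\omega)\e^{\ii\omega u}$ and $\FT[g](\omega)\e^{\ii\omega u}$, which again collapses to $\doti{\FT[f](\omega)}{\FT[g](\omega)}$ after cancelling the unimodular factor; absolute convergence is guaranteed by Cauchy-Schwarz together with the finiteness of the two norms.

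For the convolution formula \eqref{convolution} I would simply invoke the computation carried out in the paragraphs immediately preceding the Proposition: applying \eqref{fourier-int-der} to $g(t-u)$ shows that the partial sum $\sum_{k=0}^n\K{k}[f](u)\,\K{k}_u[g(t-u)]$ equals $\int_{-\infty}^{\infty}\bigl(\sum_{k=0}^n(-\ii)^k\K{k}[f](u)\PP{k}{\omega}\bigr)\FT[g](\omega)\e^{\ii\omega(t-u)}\,\da$; by \eqref{ft-expand} the bracketed sum converges in $\LT$ to $\FT[f](\omega)\e^{\ii\omega u}$, and since $\FT[g](\omega)\e^{\ii\omega(t-u)}\in\LT$ (because $t-u\in\Rset$), continuity of the $\LT$ inner product lets us pass to the limit, giving $\sum_{k=0}^\infty\K{k}[f](u)\K{k}_u[g(t-u)] = \int_{-\infty}^{\infty}\FT[f](\omega)\FT[g](\omega)\e^{\ii\omega t}\,\da$, again independent of $u$. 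Finiteness follows from the bound $|\int\FT[f]\FT[g]\e^{\ii\omega t}\da| \le \noi{\FT[f]}\,\noi{\FT[g]}$.

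Finally I would remark that the right-hand sides are the stated quantities in the band-limited special case: when $a(\omega)$ is absolutely continuous with weight $\wght(\omega)$, Proposition~\ref{weight-space} gives $\widehat{f}(\omega) = 2\pi\,\FT[f](\omega)\wght(\omega)$, so $\noi{\FT[f](\omega)}^2 = \frac{1}{(2\pi)^2}\int|\widehat{f}(\omega)|^2\wght(\omega)^{-1}\,\da = \frac{1}{(2\pi)^2}\int|\widehat{f}(\omega)|^2\wght(\omega)^{-1}\,\rm d a(\omega)$, and Plancherel's theorem turns this into $\int f(x)^2\,\rm dx$ in the Legendre case $\wght\equiv 1/(2\pi)$ on $[-\pi,\pi]$; the analogous identities hold for the scalar product and convolution. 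The only genuinely delicate point is the interchange of the infinite sum over $k$ with the Lebesgue-Stieltjes integral in the convolution identity; the main obstacle is therefore to justify that limit, which is handled not by dominated convergence on the integrand but by the $\LT$-convergence of the partial Fourier-Stieltjes sums combined with Cauchy-Schwarz, i.e.\ by working in the Hilbert space $\LT$ rather than pointwise. Everything else is a routine application of Parseval's theorem and the isometry property of $\e^{\ii\omega u}$ for real $u$.
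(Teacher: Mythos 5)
Your proposal is correct and follows essentially the same route as the paper, which establishes the proposition in the text immediately preceding its statement: Parseval's theorem via \eqref{proj} and the unimodularity of $\e^{\ii\omega u}$ for the norm and scalar product, and for the convolution the $\LT$-convergence of the partial sums $\sum_{k\le n}(-\ii)^k\K{k}[f](u)\PP{k}{\omega}$ to $\FT[f](\omega)\e^{\ii\omega u}$ combined with Cauchy--Schwarz to pass to the limit inside the Stieltjes integral. Your closing remark on the absolutely continuous case is extra context rather than part of the proof, but it is consistent with Proposition~\ref{weight-space}.
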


Letting $g(t)\equiv\mm(t)$ in \eqref{convolution}, we get
$(f\ast_{\Mi}\mm)(t)=\CE[f,u](t)=f(t)$ for all
$f(t)\in \LL$, while by setting $u=0$, $u=t$ and $u=t/2$ in
\eqref{convolution}, we get the following lemma.
\begin{lemma}[\!\cite{IG5}]\label{con}
For every $f,g\in \LL$ and for every $t\in\Rset,$
\begin{eqnarray}\label{symm}
\sum_{k=0}^{\infty}
(-1)^k\,\K{k}[f](t)\,\K{k}[g](0)
&=&\sum_{k=0}^{\infty}(-1)^k\,\K{k}[f](0)\, \K{k}[g](t)\nonumber\\
&=&\sum_{k=0}^{\infty}(-1)^k\,\K{k}[f](t/2)\, \K{k}[g](t/2).\nonumber
\end{eqnarray}
\end{lemma}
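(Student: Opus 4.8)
The plan is to obtain all three expressions as evaluations of the locally defined convolution $(f\ast_{\Mi}g)(t)$ of Proposition~\ref{local-space-gen}, exploiting that the sum in \eqref{convolution} is independent of the base point $u\in\Rset$.

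First I would record the elementary parity identity
\[
\K{n}_u[g(t-u)]=(-1)^{n}\,\K{n}[g](t-u),
\]
valid for every $g\in C^{\infty}$ and all real $t,u$. Indeed, writing $\K{n}$ as a differential operator $\K{n}=\sum_{j}a_{n,j}\,\dd^{j}$ in which, by the parity property of $\PP{n}{\omega}$ (condition (b) on the family \PPP), only indices $j$ of the same parity as $n$ occur, and using $\frac{\mathrm{d}^{j}}{\mathrm{d}u^{j}}g(t-u)=(-1)^{j}g^{(j)}(t-u)=(-1)^{n}g^{(j)}(t-u)$ for such $j$, the identity follows; this is exactly the manipulation that turns the first line of \eqref{cex} into the second. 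Substituting it into \eqref{convolution} gives, for all $f,g\in\LL$ and all $t,u\in\Rset$,
\[
(f\ast_{\Mi}g)(t)=\sum_{n=0}^{\infty}(-1)^{n}\,\K{n}[f](u)\,\K{n}[g](t-u),
\]
the series converging absolutely, since by Proposition~\ref{local-space-gen} both sequences $\{\K{n}[f](u)\}_{n}$ and $\{\K{n}[g](t-u)\}_{n}$ lie in $l^{2}$ with norms $\norm{f}$ and $\norm{g}$ respectively.

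Now I would simply specialise the displayed identity to the three admissible base points. Taking $u=t$ yields $\sum_{n}(-1)^{n}\K{n}[f](t)\,\K{n}[g](0)$; taking $u=0$ yields $\sum_{n}(-1)^{n}\K{n}[f](0)\,\K{n}[g](t)$; taking $u=t/2$ yields $\sum_{n}(-1)^{n}\K{n}[f](t/2)\,\K{n}[g](t/2)$. Since each of these equals the single number $(f\ast_{\Mi}g)(t)$, the three sums coincide, which is the assertion of the lemma. There is essentially no obstacle here: once the base-point independence of \eqref{convolution} is in hand (Proposition~\ref{local-space-gen}), the lemma is immediate, the only nontrivial ingredient being the one-line parity identity above, which is already implicit in \eqref{cex}; so in the write-up I would keep that to a single sentence and devote the rest to naming the three substitutions.
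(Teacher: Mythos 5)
Your proof is correct and is essentially the paper's own argument: the paper obtains Lemma~\ref{con} precisely by noting $\K{k}_u[g(t-u)]=(-1)^k\K{k}[g](t-u)$ and setting $u=0$, $u=t$ and $u=t/2$ in the $u$-independent convolution formula \eqref{convolution} of Proposition~\ref{local-space-gen}. Nothing further is needed.
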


Since $\mm(z)$ is analytic on $\st{}$, so are $\K{n}[\mm](z)$
for all $n$; thus, since by \eqref{orthonorm}
$\sum_{m=0}^{\infty}(\K{n}\circ\K{m})[\mm](0)^2=1$, we have
$\K{n}[\mm](t)\in \LL$ for all $n$. Let $u$ be a fixed real
parameter; consider functions $B^{n}_{u}(t)=\K{n}_u[\mm(t-u)]=
(-1)^n\K{n}[\mm](t-u)\in\LL$. Since
\begin{eqnarray}\label{orthob}
&&\Mscal{B^{n}_{u}(t)}{B^{m}_{u}(t)}\\
&&\hspace*{5mm}=\sum_{k=0}^{\infty}(\K{k}_t\circ\K{n}_u)[\mm(t-u)]
(\K{k}_t\circ\K{m}_u)[\mm(t-u)]\nonumber\\
&&\hspace*{5mm}=\sum_{k=0}^{\infty}(-1)^{m+n}(\K{k}\circ\K{n})[\mm](t-u)
(\K{k}\circ\K{m})[\mm](t-u)\Big|_{t=u}\nonumber\\
&&\hspace*{5mm}=\delta(m-n),\nonumber
\end{eqnarray}
the family $\{B^{n}_{u}(t)\}_{n\in\Nset}$ is orthonormal in
$\LL$ and for all $n\in\Nset$ and all $t\in\Rset$,
\begin{equation}\label{sumsq}
\sum_{k=0}^{\infty}(\K{k}\circ\K{n})[\mm](t)^2=1.
\end{equation}

By \eqref{orthonorm}, for $f\in\LL$,
\begin{eqnarray}\label{projf}
\Mscal{f}{\K{n}_u[\mm(t-u)]}&=&
\sum_{k=0}^{\infty}
\K{k}[f](t)(\K{k}_t\circ\K{n}_u)[\mm(t-u)]\Big|_{t=u}\nonumber\\
&=& \sum_{k=0}^{\infty}(-1)^n
\K{k}[f](u)(\K{k}\circ\K{n})[\mm](0)\nonumber\\
&=&\K{n}[f](u).
\end{eqnarray}
\begin{proposition}[\!\!\cite{IG5}]\label{in-LL}
The chromatic expansion $\CE[f,u](t)$ of $f(t)\in\LL$ is the
Fourier series of $f(t)$ with respect to the orthonormal system
$\{\K{n}_u[\mm(t-u)]\}_{n\in\Nset}$.  The chromatic
expansion converges to $f(t)$
in \LL; thus, $\{\K{n}_u[\mm(t-u)]\}_{n\in\Nset}$ is a complete
orthonormal base of \LL.
\end{proposition}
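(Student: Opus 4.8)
The plan is to reduce the statement to results already in hand, chiefly the orthonormality computation \eqref{orthob}, the projection formula \eqref{projf}, and the fact that the $\mom$-Fourier-Stieltjes transform $\FT$ is an isometric isomorphism. Write $B^n_u(t)=\K{n}_u[\mm(t-u)]=(-1)^n\K{n}[\mm](t-u)\in\LL$. By \eqref{orthob} the family $\{B^n_u\}_{n\in\Nset}$ is orthonormal in $\LL$, and by \eqref{projf} the $n$-th Fourier coefficient of any $f\in\LL$ relative to this family is $\Mscal{f}{B^n_u}=\K{n}[f](u)$. Comparing with the definition \eqref{cex}, the Fourier series $\sum_{n}\Mscal{f}{B^n_u}\,B^n_u(t)$ is literally $\CE[f,u](t)$, which settles the first assertion.

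For convergence in $\LL$ I would transfer the problem to $\LT$ via $\FT$. First, exactly as in \eqref{exp-CA}, one has $B^n_u(t)=\int_{-\infty}^{\infty}(-\ii)^n\PP{n}{\omega}\,\e^{-\ii\omega u}\,\e^{\ii\omega t}\,\da$, so that $\FT[B^n_u](\omega)=(-\ii)^n\PP{n}{\omega}\,\e^{-\ii\omega u}$. Next, Corollary~\ref{constant} (that is, \eqref{ft-expand} for real $u$, applied with $\varphi=\FT[f]$) gives $\FT[f](\omega)\,\e^{\ii\omega u}=\sum_{n}(-\ii)^n\K{n}[f](u)\,\PP{n}{\omega}$ with convergence in $\LT$; since $u\in\Rset$ the factor $\e^{-\ii\omega u}$ is bounded, so multiplying through by it preserves $\LT$-convergence and produces $\FT[f](\omega)=\sum_{n}\K{n}[f](u)\,\FT[B^n_u](\omega)$ in $\LT$. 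By Proposition~\ref{bijection} together with \eqref{nor}, $\FT$ is a norm-preserving bijection of $\LL$ onto $\LT$, hence $\FT^{-1}$ is continuous; applying it term by term yields $f(t)=\sum_{n}\K{n}[f](u)\,B^n_u(t)=\CE[f,u](t)$ with convergence in $\LL$.

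Completeness of $\{B^n_u\}_{n\in\Nset}$ is then immediate: every $f\in\LL$ is the $\LL$-limit of its Fourier series relative to this orthonormal family, so its closed linear span is all of $\LL$ --- equivalently, \eqref{nor} is precisely the Parseval identity for $\{B^n_u\}_{n\in\Nset}$. Hence $\{B^n_u\}_{n\in\Nset}$ is a complete orthonormal basis of $\LL$.

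I do not anticipate a real obstacle here, since the analytic content is packaged in earlier results --- completeness of $\PPP$ in $\LT$ (Lemma~\ref{complete}) and the isometry property of $\FT$. The one place that calls for care is the identification $\FT[B^n_u](\omega)=(-\ii)^n\PP{n}{\omega}\,\e^{-\ii\omega u}$ and correctly carrying the unimodular twist $\e^{-\ii\omega u}$ when passing between the $\LT$-expansion of $\FT[f](\omega)\,\e^{\ii\omega u}$ and that of $\FT[f](\omega)$ itself; the rest is bookkeeping.
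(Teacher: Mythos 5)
Your proof is correct, but the convergence argument takes a genuinely different route from the paper's. The paper's own proof is a one-line direct computation: since the norm $\norm{\,\cdot\,}$ of the remainder $f-\CA[f,n,u]\in\LL$ can be evaluated at any point, it is evaluated at $t=u$, where \eqref{orthonorm} gives $\K{k}_t[f(t)-\CA[f,n,u](t)]\big|_{t=u}=0$ for $k\leq n$ and $=\K{k}[f](u)$ for $k>n$; hence $\norm{f-\CA[f,n,u]}^2=\sum_{k=n+1}^{\infty}\K{k}[f](u)^2\rightarrow 0$. You instead transfer the whole problem to $\LT$ through the isometry $\FT$ (Proposition~\ref{bijection} together with \eqref{nor}), identify $\FT[B^n_u](\omega)=(-\ii)^n\PP{n}{\omega}\,{\e}^{-{\ii}\omega u}$, expand $\FT[f](\omega)\,{\e}^{{\ii}\omega u}$ in the complete system \PPP\ via Corollary~\ref{constant}/\eqref{ft-expand}, multiply by the unimodular factor ${\e}^{-{\ii}\omega u}$, and pull back with the continuous inverse. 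Both arguments are sound, and your identification of the Fourier coefficients through \eqref{orthob} and \eqref{projf} coincides with the paper's setup preceding the proposition; the only point you leave implicit is that $(-\ii)^n\PP{n}{\omega}\,{\e}^{-{\ii}\omega u}$ indeed lies in $\LT$ (clear, since polynomials are in $\LT$ and the exponential is unimodular for real $u$), which is what licenses applying Proposition~\ref{bijection} to $B^n_u$. What the paper's route buys is brevity and the explicit remainder identity $\norm{f-\CA[f,n,u]}^2=\sum_{k>n}\K{k}[f](u)^2$, which is exactly what feeds the error estimate \eqref{error} right after the proposition; what your route buys is a cleaner structural picture, namely that the chromatic expansion is the image under $\FT^{-1}$ of the orthogonal-polynomial expansion of $\FT[f](\omega)\,{\e}^{{\ii}\omega u}$, so completeness of $\{\K{n}_u[\mm(t-u)]\}_{n\in\Nset}$ in \LL\ appears as the mirror of completeness of \PPP\ in $\LT$.
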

\begin{proof}
Since  $\K{k}_t[f(t)-\CE[f,n,u](t)]|_{t=u}$ equals $0$ for
$k\leq n$ and equals $\K{k}[f](u)$ for $k>n$,
$\norm{f-\CE[f,n,u]}=
\sum_{k=n+1}^{\infty}\K{k}[f](u)^2\rightarrow 0$.
\end{proof}
Note that using \eqref{sumsq} with $n=0$ we get
\begin{eqnarray}
|f(t)-\CE[f,n,u](t)|&\leq&\sum_{k=n+1}^{\infty}
|\K{k}[f](u)\K{k}[\mm](t-u)|\nonumber\\
&&\hspace*{-25mm}\leq\left(\sum_{k=n+1}^{\infty}\K{k}[f](u)^2
\sum_{k=n+1}^{\infty}\K{k}[\mm](t-u)^2\right)^{1/2}\nonumber\\
&&\hspace*{-25mm}=\left(\sum_{k=n+1}^{\infty}\K{k}[f](u)^2\right)^{1/2}
\left(1-\sum_{k=0}^{n}\K{k}[\mm](t-u)^2\right)^{1/2}.\label{error}
\end{eqnarray}

Let
\begin{eqnarray*}
E_n(t)=\left(1-\sum_{k=0}^{n}\K{k}[\mm](t)^2\right)^{1/2};
\end{eqnarray*}
then, using Lemma \ref{CD}, we have
\begin{eqnarray*}
E_n^\prime(t)=\gamma_n\;\K{n+1}[\mm](t)\;\K{n}[\mm](t)
\left(1-\sum_{k=0}^{n}\K{k}[\mm](t)^2\right)^{-1/2}.
\end{eqnarray*}
Since $(D^{k}\circ\K{n})[\mm](0)=0$ for all $0\leq k\leq n-1$,
we get that $E_n^{(k)}(t)=0$ for all $k\leq 2n+1$. Thus,
$E_{n}(0)=0$ and $E_{n}(t)$ is very flat around $t=0$, as the
following graph of $E_{15}(t)$ shows, for the particular case
of the chromatic derivatives associated with the Legendre
polynomials. This explains why chromatic expansions provide
excellent local approximations of signals $f\in\BL{\pi}$.
\begin{figure}[h]
\begin{center}
\includegraphics[width = 2.5in]{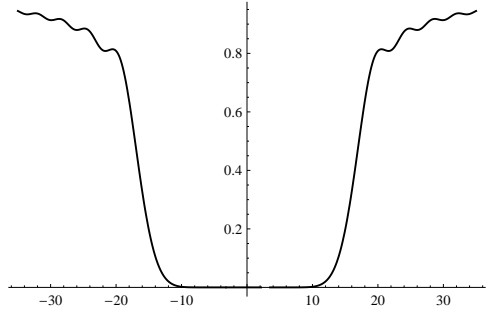}
\caption{Error bound $E_{15}(t)$ for the chromatic
approximation of order $15$ associated with the Legendre polynomials.}
\end{center}
\end{figure}

\subsection{Chromatic expansions and linear operators}
Let $A$ be a linear operator on $\LL$ which is continuous with
respect to the norm $\norm{f}$. If $A$ is shift invariant,
i.e., if for every fixed $h$, $A[f(t+h)]=A[f](t+h)$ for all
$f\in \LL$, then $A$ commutes with differentiation on \LL\ and
\begin{equation*}
A[f](t)=\sum_{n=0}^{\infty} \,(-1)^{n}\K{n} [f](u)\, \K{n}[A[\mm]](t-u)
=(f\ast_{\Mi}A[\mm])(t).
\end{equation*}
Consequently, the action of such $A$ on any function in $\LL$
is uniquely determined by $A[\mm]$, which plays the role of
the \emph{impulse response} $A[\sinc]$ of a \emph{continuous time
invariant linear system} in the standard signal processing
paradigm based on Shannon's expansion.

Note that if $A$ is a continuous linear operator $A$ on \LL\ such that
$(A\circ D^n)[\mm](t)=(D^n\circ A)[\mm](t)$ for all $n$, then
Lemma~\ref{con} implies that for every $f\in \LL$,
\begin{eqnarray*}
A[f](t)&=& \sum_{n=0}^{\infty} \,(-1)^{n}\K{n} [f](0)\,
\K{n}[A[\mm]](t) \\
&=&\sum_{n=0}^{\infty} \,(-1)^{n} \K{n}[A[\mm]](0)\, \K{n} [f](t).
\end{eqnarray*}
Since operators $\K{n}[f](t)$ are shift invariant, such $A$ must be also
shift invariant.

\subsection{A geometric interpretation}
For every particular value of $t\in \Rset$ the mapping of \LL\
into $l^2$ given by $f\mapsto f_t=\Langle
\K{n}[f](t)\Rangle_{n\in\Nset}$ is unitary isomorphism which
maps the base of \LL,  consisting of vectors
$B^{k}(t)=(-1)^k\K{k}[\mm(t)]$, into vectors $B^{k}_{t}=
\Langle(-1)^k(\K{n}\circ\K{k})[\mm(t)]\Rangle_{n\in\Nset}$.
Since the first sum in \eqref{orthob} is independent of $t$, we
have $\langle B^{k}_{t},B^{m}_{t}\rangle=\delta(m-k)$, and
\eqref{projf} implies $\langle
f_t,B^{k}_{t}\rangle=\K{k}[f](0)$. Thus, since
$\sum_{k=0}^{\infty}\K{k}[f](0)^2< \infty$, we have
$\sum_{k=0}^{\infty}\K{k}[f](0)\,B^{k}_{t}\in l^2$ and
\begin{eqnarray*}
\sum_{k=0}^{\infty}\langle f_t,B^{k}_{t}\rangle\; B^{k}_{t}&=&
\sum_{k=0}^{\infty}\K{k}[f](0)\,B^{k}_{t}\\&=&
\Big\langle\!\!\Big\langle\sum_{k=0}^\infty\K{k}[f](0)
(-1)^k(\K{n}\circ\K{k})[\mm(t)]\Big\rangle\!\!
\Big\rangle_{n\in\Nset}.
\end{eqnarray*}
Since for $f\in\LL$ the chromatic series of $f$ converges
uniformly on $\Rset$, we have $K^{n}[f](t)=
\sum_{k=0}^{\infty}K^{k}[f](0)
(-1)^k(\K{n}\circ\K{k})[\mm(t)].$ Thus,
\begin{equation*}
\sum_{k=0}^{\infty}\langle f_t,B^{k}_{t}\rangle\; B^{k}_{t}=
\sum_{k=0}^{\infty}K^{k}[f](0)\; B^{k}_{t}=
\Langle \K{n}[f](t)\Rangle_{n\in \Nset}=f_t.
\end{equation*}

Thus, while the coordinates of $f_t=\Langle \K{n}[f](t)\Rangle_{n\in
\Nset}$ in the usual base of $l^2$ vary with $t$, the
coordinates of $f_t$ in the bases $\{B^{k}_{t}\}_{k\in \Nset}$
remain the same as $t$ varies.

We now show that $\{B^{n}_{t}\}_{n\in\Nset}$ is the moving
frame of a helix $H:\Rset\mapsto l_2$.

\begin{lemma}[\!\!\cite{IG5}]\label{continuous}
Let $f\in\LL$ and let $t\in\Rset$ vary; then $\vec{f}(t)=
\Langle\K{n}[f](t)\Rangle_{n\in\Nset}$ is a continuous curve in
$l_2$.
\end{lemma}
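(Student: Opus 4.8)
The plan is to show that $\vec f(t)=\Langle\K{n}[f](t)\Rangle_{n\in\Nset}$ depends continuously on $t$ in the $l^2$ norm, i.e.\ that $\|\vec f(t+h)-\vec f(t)\|_{l^2}\to 0$ as $h\to 0$. By the isometry in \eqref{nor} this $l^2$ norm equals $\norm{\tau_h f - f}$, where $\tau_h f(\cdot)=f(\cdot+h)$, since $\K{n}[f](t+h)=\K{n}[\tau_h f](t)$ and the sums in Proposition~\ref{local-space-gen} are independent of the evaluation point. Thus the statement reduces to: translation is strongly continuous on $\LL$ with respect to $\norm{\cdot}$. First I would pass to the $\mom$-Fourier-Stieltjes transform side via Proposition~\ref{bijection}: if $\varphi=\FT[f]\in\LT$, then $\FT[\tau_h f](\omega)=\varphi(\omega)\e^{\ii\omega h}$ (this is immediate from \eqref{f-int} and \eqref{ft0}), and by \eqref{nor} we have $\norm{\tau_h f - f}^2 = \noi{\varphi(\omega)(\e^{\ii\omega h}-1)}^2 = \int_{-\infty}^{\infty}|\varphi(\omega)|^2\,|\e^{\ii\omega h}-1|^2\,\da$.

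The remaining step is a dominated-convergence argument in $\LT$. For each fixed $\omega$, $|\e^{\ii\omega h}-1|^2\to 0$ as $h\to 0$; the integrand is dominated by $4|\varphi(\omega)|^2$, which is integrable with respect to $\da$ since $\varphi\in\LT$. Hence $\noi{\varphi(\omega)(\e^{\ii\omega h}-1)}^2\to 0$, which gives continuity at the arbitrary point $t$ (the estimate does not involve $t$ at all, so in fact one obtains uniform continuity of $\vec f$ on $\Rset$). Translating back through the isometry, $\|\vec f(t+h)-\vec f(t)\|_{l^2}\to 0$, so $\vec f$ is a continuous curve in $l^2$.

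I expect no serious obstacle here; the one point requiring a little care is justifying that $\FT$ intertwines translation with multiplication by $\e^{\ii\omega h}$ and that $\norm{\cdot}$ transports exactly to $\noi{\cdot}$ on $\LT$ — but both are already recorded in the excerpt (Proposition~\ref{bijection}, equation \eqref{nor}, and the defining formula \eqref{f-int}, which makes the translation identity a one-line computation under the integral sign using analyticity on $\st{2}$). An alternative, entirely elementary route avoiding the transform: fix $\varepsilon>0$, choose $N$ with $\sum_{n>N}|\K{n}[f](t)|^2<\varepsilon$ (possible and $t$-independent by Corollary~\ref{independent}), handle the finite head $\sum_{n\le N}|\K{n}[f](t+h)-\K{n}[f](t)|^2$ by ordinary continuity of each analytic function $\K{n}[f]$, and bound the tail at $t+h$ by $2\sum_{n>N}|\K{n}[f](t+h)|^2<2\varepsilon$, again using $t$-independence of the total sum. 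I would present the transform-side proof as the main argument since it is shortest, and perhaps remark on the elementary variant.
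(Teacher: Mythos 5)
Your main argument is correct, but it is a genuinely different route from the paper's. The paper stays entirely on the time side: it writes $\norm{f(t)-f(t+h)}^2\leq\sum_{n=0}^{N}(\K{n}[f](t)-\K{n}[f](t+h))^2+2\sum_{n=N+1}^{\infty}\K{n}[f](t)^2+2\sum_{n=N+1}^{\infty}\K{n}[f](t+h)^2$, controls the two tails \emph{uniformly on a compact interval} by Dini's theorem (the partial sums are continuous, monotone in $N$, and converge to a continuous — indeed constant — limit), and handles the finite head by uniform continuity of each $\K{n}[f]$ on that interval. Your proof instead transports everything through the isometry $\norm{g}=\noi{\FT[g]}$ of \eqref{nor} and the translation identity $\FT[\tau_h f](\omega)=\FT[f](\omega)\,\e^{\ii\omega h}$ (which is indeed a one-line consequence of \eqref{f-int} together with Proposition~\ref{some}, since $|\e^{\ii\omega h}|=1$ keeps $\varphi(\omega)\e^{\ii\omega h}$ in $\LT$), and concludes by dominated convergence. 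This is shorter, and it buys something the paper's argument does not state: the bound $\int|\varphi(\omega)|^2|\e^{\ii\omega h}-1|^2\da$ is independent of $t$, so you get \emph{uniform} continuity of $\vec f$ on all of $\Rset$, not just on compact intervals. The paper's argument is more elementary (no transform, no dominated convergence) and is the template reused verbatim in Lemma~\ref{differentiable}, which is presumably why the author prefers it.

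One caveat about your "elementary variant": the justification "bound the tail at $t+h$ by $2\sum_{n>N}|\K{n}[f](t+h)|^2<2\varepsilon$, again using $t$-independence of the total sum" is not right as stated. Only the \emph{total} $\sum_{n=0}^{\infty}\K{n}[f](t)^2$ is independent of $t$; the tail equals that constant minus the head $\sum_{n\le N}\K{n}[f](t)^2$, which does vary with $t$, so choosing $N$ at the single point $t$ does not by itself control the tail at $t+h$. You would need either the paper's Dini argument for uniformity on an interval, or the observation that the head is continuous in $t$, so the tail at $t+h$ converges to the tail at $t$ as $h\to 0$. Since this variant is only a side remark and your transform-side proof is complete, this does not affect the correctness of your submission.
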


\begin{proof} Let $f\in\LL$; then, since
$\sum_{n=0}^{\infty}\K{n}[f](t)^2$ converges to a continuous
(constant) function, by Dini's theorem, it converges uniformly
on every finite interval $I$.  Thus, the last two sums on the
right side of inequality
$\norm{f(t)-f(t+h)}^2\leq\sum_{n=0}^{N}
(\K{n}[f](t)-\K{n}[f](t+h))^2+ 2\sum_{n=N+1}^{\infty}
\K{n}[f](t)^2+2\sum_{n=N+1}^{\infty}\K{n}[f](t+h)^2$ can be
made arbitrarily small on $I$ if $N$ is sufficiently large.
Since functions $\K{n}[f](t)$ have continuous derivatives, they
are uniformly continuous on $I$. Thus,
$\sum_{n=0}^{N}(\K{n}[f](t)-\K{n}[f](t+h))^2$ can also be made
arbitrarily small on $I$ by taking $|h|$ sufficiently small.
\end{proof}

\begin{lemma}[\!\!\cite{IG5}]\label{differentiable} If $g^\prime\in\LL$,
then $\displaystyle{\lim_{|h|\rightarrow 0}
\norm{\frac{g(t)-g(t+h)}{h}-g^\prime(t)} = 0}$; thus, the curve
$\vec{g}(t)=\Langle \K{n}[g](t)\Rangle_{n\in\Nset}$ is
differentiable, and
$(\vec{g})^{\prime}(t)=\Langle\K{n}[g^{\prime}](t)
\Rangle_{n\in\Nset}$.
\end{lemma}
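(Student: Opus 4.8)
The plan is to transfer the statement to $l^2$ via the map $\Phi_t\colon\LL\to l^2$, $\Phi_t(f)=\Langle\K{n}[f](t)\Rangle_{n\in\Nset}$, which is linear and, by \eqref{nor}, norm preserving, and then to read off the required estimate from Lemma~\ref{continuous} applied to $g'$. First I would observe that the difference quotient $g_h$, $g_h(s)=(g(s+h)-g(s))/h$, again lies in \LL: a real translate $g(\cdot+h)$ of a function in \LL\ is analytic on the strip $\st{2}$ (which is invariant under real shifts), and, a constant-coefficient differential operator commuting with translations, $\K{n}[g(\cdot+h)](0)=\K{n}[g](h)$, so $\sum_n|\K{n}[g(\cdot+h)](0)|^2=\sum_n|\K{n}[g](0)|^2<\infty$ by Corollary~\ref{independent}; since \LL\ is a vector space, $g_h\in\LL$. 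Then $\Phi_t(g_h)=(\vec g(t+h)-\vec g(t))/h$, and, because each $\K{n}$ is a polynomial in $\dd$ with constant coefficients, $\K{n}$ commutes with $\dd$, so $\K{n}[g]\in C^\infty$, $(\K{n}[g])'=\K{n}[g']$, and $\Phi_t(g')=\Langle\K{n}[g'](t)\Rangle_{n\in\Nset}$ is exactly the coordinatewise derivative of the curve $\vec g(t)=\Phi_t(g)$. Hence it suffices to show $\norm{g_h-g'}\to0$ as $h\to0$, which (up to the harmless sign change replacing $g(t)-g(t+h)$ by $g(t+h)-g(t)$) is the displayed limit of the Lemma.

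For the estimate, fix $t$ and write $I$ for the interval with endpoints $0$ and $h$, so $|I|=|h|$. For each $n$, the fundamental theorem of calculus gives $\K{n}[g](t+h)-\K{n}[g](t)=\int_0^h\K{n}[g'](t+s)\,ds$, hence, by the Cauchy--Schwarz inequality,
\begin{eqnarray*}
\left(\frac{\K{n}[g](t+h)-\K{n}[g](t)}{h}-\K{n}[g'](t)\right)^{2}
&=&\left(\frac1h\int_0^h\bigl(\K{n}[g'](t+s)-\K{n}[g'](t)\bigr)\,ds\right)^{2}\\
&\le&\frac1{|h|}\int_I\bigl(\K{n}[g'](t+s)-\K{n}[g'](t)\bigr)^{2}\,ds.
\end{eqnarray*}
Summing over $n$ and interchanging the summation with the integral (Tonelli, all terms nonnegative),
\begin{eqnarray*}
\norm{g_h-g'}^{2}
&\le&\frac1{|h|}\int_I\Bigl(\sum_{n=0}^{\infty}\bigl(\K{n}[g'](t+s)-\K{n}[g'](t)\bigr)^{2}\Bigr)ds\\
&=&\frac1{|h|}\int_I\bigl\|\vec{g'}(t+s)-\vec{g'}(t)\bigr\|_{l^2}^{2}\,ds
\ \le\ \sup_{s\in I}\bigl\|\vec{g'}(t+s)-\vec{g'}(t)\bigr\|_{l^2}^{2},
\end{eqnarray*}
where $\vec{g'}(u)=\Langle\K{n}[g'](u)\Rangle_{n\in\Nset}\in l^2$ because $g'\in\LL$. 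Since $g'\in\LL$, Lemma~\ref{continuous} applies to $g'$ and shows $u\mapsto\vec{g'}(u)$ is continuous into $l^2$, so the last supremum tends to $0$ as $h\to0$. This proves $\norm{g_h-g'}\to0$, equivalently $(\vec g(t+h)-\vec g(t))/h\to\vec{g'}(t)$ in $l^2$; thus $\vec g$ is differentiable at $t$ with $(\vec g)'(t)=\Langle\K{n}[g'](t)\Rangle_{n\in\Nset}$.

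I do not expect a serious obstacle: once Lemma~\ref{continuous} is in hand the argument is mostly bookkeeping, and the two points that need a little attention are the a priori check that $g_h\in\LL$ (so that the \LL-norm and the isometry $\Phi_t$ apply) and the interchange of $\sum_n$ with the $h$-average, which Tonelli legitimizes. A slightly slicker, equivalent route bypasses Lemma~\ref{continuous} by working on the transform side: from $g'=f_{\FT[g']}$ together with the antiderivative formula $g(z)=g(0)+\int_{-\infty}^{\infty}\FT[g'](\omega)\,(\e^{\ii\omega z}-1)/(\ii\omega)\,\da$ (the integrand read as $\FT[g'](0)\,z$ at $\omega=0$, the interchange justified exactly as in the proof of Proposition~\ref{anaz}) one obtains $\FT[g_h](\omega)=\FT[g'](\omega)\,(\e^{\ii\omega h}-1)/(\ii\omega h)$, which already gives $g_h\in\LL$ from $g'\in\LL$ alone since $|(\e^{\ii\omega h}-1)/(\ii\omega h)|\le1$; then \eqref{nor} yields $\norm{g_h-g'}^{2}=\int_{-\infty}^{\infty}|\FT[g'](\omega)|^{2}\,|(\e^{\ii\omega h}-1)/(\ii\omega h)-1|^{2}\,\da$, and dominated convergence with the majorant $4|\FT[g'](\omega)|^{2}\in L^{1}(\da)$ closes the argument. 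I would present the first route as the main proof, since it stays inside the chromatic formalism and reuses Lemma~\ref{continuous}.
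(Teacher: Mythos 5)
Your proof is correct, but it takes a noticeably different route from the paper's. The paper argues coordinatewise via the mean value theorem, writing $(\K{n}[g](t)-\K{n}[g](t-h))/h=\K{n}[g'](\xi_n^t)$, and then runs an explicit $\varepsilon$--$N$--$\delta$ argument: it splits the sum at an index $N$ supplied by the Dini-type uniform tail bound for $\sum_n\K{n}[g'](u)^2$ on a finite interval and uses uniform continuity of the first $N$ coordinates --- in effect repeating inside the proof the same estimates that prove Lemma~\ref{continuous}. You instead write the error as an average via the fundamental theorem of calculus, apply Cauchy--Schwarz and Tonelli to get $\norm{g_h-g'}^2\le\sup_{s\in I}\bigl\|\vec{g'}(t+s)-\vec{g'}(t)\bigr\|_{l^2}^2$, and then invoke Lemma~\ref{continuous} for $g'$ as a black box. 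That buys a shorter argument and, incidentally, avoids the mean value theorem, whose intermediate-point form applies verbatim only to real-valued coordinates (the integral remainder works for complex-valued $\LL$ functions without splitting into real and imaginary parts); the paper's version, by contrast, is self-contained and does not presuppose the membership question at all, since it only bounds $\sum_n(\cdot)^2$ at each $t$. Two small remarks: your preliminary check that $g_h\in\LL$ is argued from $g\in\LL$, which is not literally among the hypotheses (only $g'\in\LL$ is stated); either your Fourier-side observation $\FT[g_h](\omega)=\FT[g'](\omega)(\e^{\ii\omega h}-1)/(\ii\omega h)$, which needs only $g'\in\LL$, or the remark that your displayed estimate at the fixed $t$ is all the conclusion requires, closes this cleanly (in the paper's application $g=\K{k}[\mm]\in\LL$ anyway, and the curve statement implicitly needs $g\in\LL$ regardless). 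You also correctly note the sign slip in the statement ($g(t)-g(t+h)$ versus $g(t+h)-g(t)$), which the paper's own proof silently handles by working with $g(t)-g(t-h)$.
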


\begin{proof} Let $I$ be any finite interval;
since  $g^{\prime}\in \LL$, for every $\varepsilon >0$ there
exists $N$ such that $\sum_{n=N+1}^{\infty}\K{n}
\left[g^\prime\right](u)^2<\varepsilon /8$ for all  $u\in I$.
Since functions $\K{n} \left[g^\prime\right](u)$ are uniformly
continuous on $I$, there exists a $\delta > 0$ such that for
all $t_1,t_2\in I$, if $|t_1-t_2|<\delta$ then
$\sum_{n=0}^{N}\left(\K{n}
[g^\prime](t_1)-\K{n}[g^\prime](t_2)\right)^2<\varepsilon/2$.
Let $h$ be an arbitrary number such that $|h|<\delta$; then for
every $t$ there exists a sequence of numbers $\xi_n^t$ that lie
between $t$ and $t-h$, and such that
$(\K{n}[g](t)-\K{n}[g](t-h))/h =\K{n}[g^\prime](\xi_n^t)$.
Thus, for all $t\in I$,
\begin{eqnarray*}
&&\sum_{n=0}^{\infty}\K{n}
\left[\frac{g(t)-g(t-h)}{h}-g^\prime(t)\right]^2
=\sum_{n=0}^{\infty}\left(\K{n}
[g^\prime](\xi_n^t)-\K{n}[g^\prime](t)]\right)^2\\
&&\ \ \ \ <\sum_{n=0}^{N}\left(\K{n}
[g^\prime](\xi_n^t)-\K{n}[g^\prime](t)\right)^2+
2\sum_{n=N+1}^{\infty}\K{n} [g^\prime](\xi_n^t)^2\nonumber \\
&&\hspace*{10mm}+
2\sum_{n=N+1}^{\infty}\K{n} [g^\prime](t)^2
<\varepsilon/2+4\varepsilon/8=\varepsilon.\nonumber
\end{eqnarray*}
\end{proof}
\noindent Since $\K{n}[\mm](t)\in\LL$ for all $n$, if we let
$\vec{e}_{k+1}(t)=\Langle(\K{k}\circ\K{n})[\mm](t)
\Rangle_{n\in\Nset}$ for $k\geq 0$, then by
Lemma~\ref{differentiable}, $\vec{e}_k(t)$ are differentiable
for all $k$. Since $l_2$ is complete and $\vec{e}_1(t)$ is
continuous, $\vec{e}_1(t)$ has an antiderivative $\vec{H}(t)$.
Using \eqref{three-term}, we have
\begin{eqnarray*}
\vec{e}_1(t)&=&
\vec{H}^{\!\!\ \prime}(t);\\
\vec{e}_1^{\
\prime}(t)&=&\Langle(\dd\circ\K{0}\circ\K{n})
[\mm](t)\Rangle_{n\in\Nset}=\gamma_0\, \Langle(\K{1}\circ\K{n})
[\mm](t)\Rangle_{n\in\Nset}\\
&=&\gamma_0\, \vec{e}_2(t);\label{FS1}\nonumber \\
\vec{e}_k^{\ \prime}(t)&=&-\gamma_{k-2}
\Langle(\K{k-2}\circ\K{n})
[\mm](t)\Rangle_{n\in\Nset}+\gamma_{k-1}\Langle(\K{k}\circ\K{n})
[\mm](t)\Rangle_{n\in\Nset}\\
&=& - \gamma_{k-2}\,\vec{e}_{k-1}(t) +
\gamma_{k-1}\,\vec{e}_{k+1}(t),\;\;\; \mbox{for $k\geq 2$}.
\label{FSk}\nonumber
\end{eqnarray*}
This means that the curve $\vec{H}(t)$ is a helix in $l_2$
because it has constant curvatures $\kappa_k=\gamma_{k-1}$ for all
$k\geq 1$; the above equations are the corresponding
Frenet--Serret formulas and
$\vec{e}_{k+1}(t)=\Langle(\K{k}\circ\K{n})[\mm](t)\Rangle_{n\in
\Nset}$ for $k\geq 0$ form the orthonormal moving frame of the
helix $\vec{H}(t)$.

\section{Examples}\label{examples}
We now present a few examples of chromatic derivatives and
chromatic expansions, associated with several classical
families of orthogonal polynomials. More details and more
examples can be found in \cite{HB}.

\subsection{Example 1: Legendre polynomials/Spherical Bessel functions}
Let $L_n(\omega)$ be the Legendre polynomials; if we set
$P_{n}^{\scriptscriptstyle{L}}(\omega)=
\sqrt{2n+1}\,L_n(\omega/\pi)$ then
\begin{equation*}
\int_{-\pi}^{\pi}
P_{n}^{\scriptscriptstyle{L}}(\omega)
P_{m}^{\scriptscriptstyle{L}}(\omega)\;\frac{\rm{d} \omega}{2\pi}=\delta(m-n).
\end{equation*}

The corresponding recursion coefficients in equation
\eqref{poly} are given by the formula $\gamma_n=\pi
(n+1)/\sqrt{4(n+1)^2-1}$; the corresponding space $\LT$ is
$L^2[-\pi,\pi]$. The space $\LB$ for this particular example
consists of all entire functions whose restrictions to $\Rset$
belong to $L^2$ and which have a Fourier transform supported in
$[-\pi,\pi]$. Proposition~\ref{local-space-gen} implies that in
this case our locally defined scalar product $\Mscal{f}{g}$,
norm $\norm{f}$ and convolution $(f \ast_{\Mi} g)(t)$ coincide
with the usual scalar product, norm and convolution on $L_2$.

\subsection{Example 2: Chebyshev polynomials of the first
kind/Bessel functions} Let
$P_{n}^{\scriptscriptstyle{T}}(\omega)$ be the family of
orthonormal polynomials obtained by normalizing and rescaling
the Chebyshev polynomials of the first kind, $T_n(\omega)$, by
setting $P_{0}^{\scriptscriptstyle{T}}(\omega)= 1$ and
$P_{n}^{\scriptscriptstyle{T}}(\omega)=\sqrt{2}\;T_n(\omega/\pi)$
for $n>0$.  In this case
\begin{equation*}
\int_{-\pi}^{\pi}P_{n}^{\scriptscriptstyle{T}}(\omega)
P_{m}^{\scriptscriptstyle{T}}(\omega)
\frac{\rm{d}\omega}{
\pi\sqrt{\pi^2-\omega^2}}=\delta(n-m).
\end{equation*}

By Proposition \ref{weight-space}, the corresponding space
$\LB$ contains all entire functions $f(t)$ which have a Fourier
transform $\widehat{f}(\omega)$ supported in $[-\pi,\pi]$ that
also satisfies  $\int_{-\pi}^{\pi}\sqrt{\pi^2-\omega^2}\;
|\widehat{f}(\omega)|^2\rm{d} \omega<\infty$. In this case the
corresponding space $\LB$ contains functions which do not
belong to $L^2$; the corresponding function \eqref{mmz} is
$\mm(z)={\mathrm J}_0(\pi z)$ and  for $n>0$,
$\K{n}[\mm](z)=(-1)^{n}\sqrt{2}\,{\mathrm J}_n(\pi z)$, where
${\mathrm J}_n(z)$ is the Bessel function of the first kind of
order $n$. In the recurrence relation \eqref{three-term} the
coefficients are given by $\gamma_0=\pi/\sqrt{2}$ and
$\gamma_n=\pi/2$ for $n>0$.

The chromatic expansion of a function $f(z)$ is the Neumann
series of $f(z)$ (see \cite{WAT}),
\begin{equation*}
f(t)=f(u){\mathrm J}_0 (\pi(
z-u))+\sqrt{2}\;\sum_{n=1}^{\infty}\K{n}[f](u){\mathrm J}_n(\pi (z-u)).
\end{equation*}
Thus, the chromatic expansions corresponding to various
families of orthogonal polynomials can be seen as
generalizations of the Neumann series, while the families of
corresponding functions $\{\K{n}[\mm](z)\}_{n\in\Nset}$ can be
seen as generalizations and a uniform representation of some
familiar families of special functions.

\subsection{Example 3:  Hermite
polynomials/Gaussian monomial functions}

Let $H_n(\omega)$ be the Hermite polynomials; then polynomials
$P_{n}^{\scriptscriptstyle{H}}(\omega)=
(2^{n}n!)^{-1/2}H_n(\omega)$  satisfy
\begin{equation*}\int_{-\infty}^{\infty}
P_{n}^{\scriptscriptstyle{H}}(\omega)
P_{m}^{\scriptscriptstyle{H}}(\omega)
\;{\e}^{-\omega^2}\;\frac{\rm{d}\omega}{\sqrt{\pi}}
=\delta(n-m).
\end{equation*}
The corresponding space $\LB$ contains entire functions whose
Fourier transform $\widehat{f}(\omega)$ satisfies
$\int_{-\infty}^{\infty} |\widehat{f}(\omega)|^2
\;{\e}^{\omega^2} \rm{d} \omega<\infty$. In this case the space
$\LL$ contains non-bandlimited signals; the corresponding
function defined by \eqref{mmz} is $\mm(z)={\e}^{-z^2/4}$ and
$\K{n}[\mm](z)= (-1)^{n}(2^{n}\,n!)^{-1/2}\,z^{n}
{\e}^{-z^2/4}$. The corresponding recursion coefficients are
given by $\gamma_n=\sqrt{(n+1)/2}$. The chromatic expansion of
$f(z)$ is just the Taylor expansion of $f(z)\e^{z^2/4}$,
multiplied by $\e^{-z^2/4}$.

\subsection{Example 4: Herron family}
This example is a slight modification of an example from
\cite{HB}. Let the family of orthonormal polynomials be
given by the recursion $L_0(\omega)= 1$, $L_1(\omega)=\omega$,
and $L_{n+1}(\omega)=\omega/(n+1)L_n(\omega)-
n/(n+1)L_{n-1}(\omega).$ Then
\begin{equation*}
\frac{1}{2}\int_{-\infty}^{\infty}L(m,\omega)\;L(n,\omega)\;
\sech\left(\frac{\pi\omega}{2}\right)
\;\rm{d}\omega=
\delta(m-n).
\end{equation*}
In this case $\mm(z)=\sech z$ and $\K{n}[\mm](z)= (-1)^{n}\sech
z\, \tanh^{n} z$. The recursion coefficients are given by
$\gamma_n=n+1$ for all $n\geq 0$. If $E_n$ are the Euler
numbers, then $\sech z=\sum_{n=0}^\infty E_{2n}\,z^{2n}/(2n)!$,
with the series converging only in the disc of radius $\pi/2$.
Thus, in this case $\mm(z)$ is not an entire function.

\section{Weakly bounded moment functionals}\label{sec5}
\subsection{}

To study local convergence of chromatic expansions of functions
which are not in $\LL$ we found it necessary to
restrict the class of chromatic moment functionals. The
restricted class, introduced in \cite{IG5}, is still very broad
and contains functionals
that correspond to many classical families of orthogonal
polynomials. It consists of functionals such that the
corresponding recursion coefficients $\gamma_n>0$ appearing in
\eqref{three-term} are such that
sequences $\{\gamma_n\}_{n\in \Nset}$ and
$\{\gamma_{n+1}/\gamma_{n}\}_{n\in\Nset}$ are bounded from
below by a positive constant, and such that the growth rate of the sequence
$\{\gamma_n\}_{n\in \Nset}$ is sub-linear in $n$.  For
technical simplicity in the definition below these conditions
are formulated using a single constant $M$ in all of the bounds.
\begin{definition}[\!\!\cite{IG5}]\label{def-weak} Let \mom\ be a
moment functional such that for some $\gamma_n>0$
\eqref{three-term} holds.
\begin{enumerate} \item \mom\ is \textit{weakly bounded}
if there exist some $M\geq 1$, some $0\leq p<1$ and some
integer $r\geq 0$, such that for all $n\geq 0$,
\begin{equation}
\frac{1}{M}\leq \gamma_n\leq M (n + r)^p, \label{one-weak}\\
\end{equation}
\begin{equation}\frac{{\gamma_{n}}}{{\gamma_{n+1}}}\leq M^2.\label{two-weak}
\end{equation}
\item \mom\ is \textit{bounded} if there exists some $M\geq
    1$ such that for all $n\geq 0$,
\begin{equation}
\frac{1}{M}\leq{\gamma_n}\leq M.
\end{equation}
\end{enumerate}
\end{definition}

Since \eqref{three-term} is assumed to hold for some
$\gamma_n>0$, weakly bounded moment functionals are positive
definite and symmetric. Every bounded functional \mom\ is also
weakly bounded with $p=0$. Functionals in our Example 1 and
Example 2 are bounded. For bounded moment functionals \mom\ the
corresponding moment distribution $ a(\omega)$ has a finite
support \cite{Chih} and consequently $\mm(t)$ is a band-limited
signal. However, $\mm(t)$ can be of infinite energy (i.e., not
in $L^2$) as is the case in our Example 2. Moment functional in
Example 3 is weakly bounded but not bounded $(p=1/2)$; the
moment functional in Example 4 is not weakly bounded $(p=1)$.
We note that important examples of classical orthogonal
polynomials which correspond to weakly bounded moment
functionals in fact satisfy a stronger condition from the
following simple Lemma.

\begin{lemma}
Let \mom\ be such that \eqref{three-term} holds for some $\gamma_n>0$.
If for some $0\leq p<1$ the sequence ${\gamma_n}/{n^p}$
converges to a finite positive limit, then \mom\ is weakly
bounded.
\end{lemma}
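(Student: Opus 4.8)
The plan is to verify directly, from the hypothesis, the two inequalities \eqref{one-weak} and \eqref{two-weak} that define weak boundedness in Definition~\ref{def-weak}; I would use the given exponent $p$ (which by assumption satisfies $0\le p<1$) and take the integer shift to be $r=1$, so that $(n+r)^p=(n+1)^p\ge 1$ for every $n\ge 0$. All that is really needed is to exhibit one constant $M\ge 1$ making every required bound hold at once, and the argument splits into a ``tail'' part governed by the limit and a ``finite initial segment'' part handled by hand.

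First I would set $L=\lim_{n\to\infty}\gamma_n/n^p$, so that $0<L<\infty$ by hypothesis, and pick $N\ge 1$ with $\tfrac{L}{2}n^p\le\gamma_n\le 2Ln^p$ for all $n\ge N$. For such $n$ this yields at once the tail estimates $\gamma_n\ge\tfrac{L}{2}$, $\gamma_n\le 2L(n+1)^p$, and
\[
\frac{\gamma_n}{\gamma_{n+1}}\le\frac{2Ln^p}{(L/2)(n+1)^p}=4\Bigl(\frac{n}{n+1}\Bigr)^p\le 4 .
\]
Next, since the remaining $\gamma_0,\dots,\gamma_{N-1}$ are finitely many positive numbers, I would record $c_1=\min_{0\le n<N}\gamma_n>0$, $c_2=\max_{0\le n<N}\gamma_n<\infty$ and $c_3=\max_{0\le n<N}\gamma_n/\gamma_{n+1}<\infty$, noting that on this range $\gamma_n\le c_2\le c_2(n+1)^p$ since $(n+1)^p\ge 1$.

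Finally I would take $M=\max\{2,\ 2/L,\ 1/c_1,\ 2L,\ c_2,\ \sqrt{c_3}\}$, which is $\ge 1$. Then \eqref{one-weak} holds: $\gamma_n\ge 1/M$ for all $n$ (using $\gamma_n\ge L/2$ for $n\ge N$ and $\gamma_n\ge c_1$ for $n<N$), and $\gamma_n\le M(n+1)^p$ for all $n$ (using $\gamma_n\le 2L(n+1)^p$ for $n\ge N$ and $\gamma_n\le c_2\le M\le M(n+1)^p$ for $n<N$). Likewise \eqref{two-weak} holds because $\gamma_n/\gamma_{n+1}\le 4\le M^2$ for $n\ge N$ and $\gamma_n/\gamma_{n+1}\le c_3\le M^2$ for $n<N$. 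Hence $\mom$ is weakly bounded with parameters $M$, $p$ and $r=1$.

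I do not expect a genuine obstacle: this is a routine ``tail plus finite initial segment'' estimate. The only subtlety worth flagging is that when $p>0$ the ratio $\gamma_n/n^p$ is not even defined at $n=0$, so the limiting hypothesis only controls the coefficients from some index onward; choosing the shift $r=1$ (rather than $r=0$) is precisely what keeps $(n+1)^p$ bounded below by $1$ across all $n\ge 0$, which is what allows the finitely many initial coefficients to be absorbed into the single constant $M$.
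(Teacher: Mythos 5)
Your proof is correct and complete; the paper states this as a ``simple Lemma'' and omits the argument entirely, and your tail-plus-initial-segment verification of \eqref{one-weak} and \eqref{two-weak}, with the shift $r=1$ absorbing the finitely many initial coefficients, is exactly the routine argument the paper leaves implicit.
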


Weakly bounded moment functionals allow a useful estimation of
the coefficients  in the corresponding equations
\eqref{inverse} and \eqref{direct} relating the chromatic and
the ``standard'' derivatives.

\begin{lemma}[\!\!\cite{IG5}]\label{bounds}
Assume that $\mom$ is such that for some $M\geq 1, r\geq 0$ and
$p\geq 0$ the corresponding recursion coefficients $\gamma_n$
for all $n$ satisfy inequalities \eqref{one-weak}. Then the
following inequalities hold for all $k$ and $n$:
\begin{eqnarray}
|(\K{n}\circ
\dd^k)[\mm](0)|&\leq&(2M)^k(k+r)!^p;\label{b-bound}\\
\left|\K{n} \left[\frac{t^k}{k!}\right](0)\right|
&\leq&(2M)^{n}.\label{mono-bound}
\end{eqnarray}
\end{lemma}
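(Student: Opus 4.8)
The plan is to notice that, up to a power of $\ii$, the two quantities $(\K{n}\circ\dd^k)[\mm](0)$ and $\K{n}[t^k/k!](0)$ are connection coefficients: by \eqref{iwt} the first equals $\ii^{n+k}\mom(\omega^k\,\PP{n}{\omega})$, i.e. the coefficient of $\PP{n}{\omega}$ in the expansion of $\omega^k$ in the basis \PPP, while the second is $\ii^{k-n}$ times the coefficient of $\omega^k$ in $\PP{n}{\omega}$. Each of these families obeys a one-step recursion read off from the three-term recurrence, and the bounds will follow by induction. I will argue directly on the operator side; throughout one uses that $\dd$ commutes with every $\K{n}$, that $\K{0}$ is the identity, and the rewriting $\dd\circ\K{n}=\gamma_n\K{n+1}-\gamma_{n-1}\K{n-1}$ of \eqref{three-term}.

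For \eqref{b-bound} I would set $b(k,n)=(\K{n}\circ\dd^k)[\mm](0)=(\dd^k\circ\K{n})[\mm](0)$; applying $\dd^{\,k-1}$ to the rewritten recurrence and evaluating at $0$ gives $b(k,n)=\gamma_n\,b(k-1,n+1)-\gamma_{n-1}\,b(k-1,n-1)$, with $b(0,n)=\K{n}[\mm](0)=\delta(n)$ by \eqref{orthonorm} and the convention $b(j,-1)=0$. The first thing to establish, by induction on $k$, is that $b(k,n)=0$ whenever $n>k$ (true at $k=0$, and preserved since then $n\pm1>k-1$). Then a second induction on $k$ gives the bound: the base case is $|\delta(n)|\le r!^p$, which holds because $r!\ge1$; in the step the case $n>k$ is trivial, and for $n\le k$ the upper bound in \eqref{one-weak} yields $\gamma_n,\gamma_{n-1}\le M(k+r)^p$, whence $|b(k,n)|\le(\gamma_n+\gamma_{n-1})(2M)^{k-1}(k-1+r)!^p\le 2M(k+r)^p(2M)^{k-1}(k-1+r)!^p=(2M)^k(k+r)!^p$, using $(k+r)^p(k-1+r)!^p=(k+r)!^p$.

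For \eqref{mono-bound} I would set $d(n,k)=\K{n}[t^k/k!](0)$; since $\dd$ commutes with $\K{n}$ and $\dd[t^k/k!]=t^{k-1}/(k-1)!$, the rewritten recurrence evaluated at $0$ gives $d(n+1,k)=\gamma_n^{-1}d(n,k-1)+(\gamma_{n-1}/\gamma_n)\,d(n-1,k)$, with $d(0,k)=\delta(k)$ and $d(-1,k)=0$. I claim $|d(n,k)|\le(2M)^n$ for all $k$, by two-step induction on $n$: the base cases $n=0$ (so $|\delta(k)|\le1$) and $n=1$ (so $|d(1,k)|=\gamma_0^{-1}|\delta(k-1)|\le M$) are immediate, and in the step the lower bound in \eqref{one-weak} gives $\gamma_n^{-1}\le M$ and \eqref{two-weak} gives $\gamma_{n-1}/\gamma_n\le M^2$, so $|d(n+1,k)|\le M(2M)^n+M^2(2M)^{n-1}=3\cdot2^{n-1}M^{n+1}\le 2^{n+1}M^{n+1}=(2M)^{n+1}$.

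The main point to get right is the first induction in \eqref{b-bound}: the coefficients $\gamma_n,\gamma_{n-1}$ entering the recursion grow with $n$, so without the vanishing $b(k,n)=0$ for $n>k$ one could not get a bound uniform in $n$; that vanishing is exactly what permits the replacement $\gamma_n\le M(k+r)^p$, after which the telescoping $(k+r)^p(k-1+r)!^p=(k+r)!^p$ produces precisely the stated factorial. I should also flag that the argument for \eqref{mono-bound} invokes \eqref{two-weak} in addition to \eqref{one-weak}, and that this is unavoidable — without control of the ratios $\gamma_{n-1}/\gamma_n$ the coefficients $\K{n}[t^k/k!](0)$ need not be $O((2M)^n)$.
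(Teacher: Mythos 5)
Your proof is correct and takes essentially the same route as the paper's: induction on $k$ (respectively $n$) using the rearranged three-term recurrence $\dd\circ\K{n}=\gamma_n\K{n+1}-\gamma_{n-1}\K{n-1}$, with the vanishing of $(\K{n}\circ\dd^k)[\mm](0)$ for $n>k$ (which the paper gets from \eqref{bkn}) making the first bound uniform in $n$, and with $\gamma_n^{-1}\leq M$ and $\gamma_{n-1}/\gamma_n\leq M^2$ in the second part. Your remark that \eqref{mono-bound} requires \eqref{two-weak} beyond what the lemma's hypothesis literally states is accurate and consistent with the paper, whose own proof also invokes \eqref{two-weak}.
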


\begin{proof}
By \eqref{bkn}, it is enough to prove \eqref{b-bound} for all
$n,k$ such that $n\leq k$.  We proceed by induction on $k$,
assuming the statement holds for all $n\leq k$. Applying
\eqref{three-term} to $\dd^{k}[\mm](t)$ we get
\begin{equation*}|(\K{n}\circ
\dd^{k+1})[\mm](t)|\leq\! {\gamma_{n}}|(\K{n+1}\circ
\dd^{k})[\mm](t)|+
{\gamma_{n-1}}\,|(\K{n-1}\circ \dd^{k})[\mm](t)|.
\end{equation*}
Using the induction hypothesis and \eqref{bkn} again, we get
for all $n\leq k+1$,
\begin{eqnarray*}|(\K{n}\circ \dd^{k+1})
\left[\mm\right](0)|
&< & (M(k+1+r)^p+M(k+r)^p) (2M)^{k} (k+r)!^p\nonumber\\
&<&(2M)^{k+1} (k+1+r)!^p.
\end{eqnarray*}
Similarly, by \eqref{zero-mon}, it is enough to prove
\eqref{mono-bound} for all $k\leq n$. This time we proceed by
induction on $n$ and use \eqref{three-term}, \eqref{one-weak}
and \eqref{two-weak} to get
\begin{equation*}
\left|\K{n+1}\left[\frac{t^k}{k!}\right]\right| \leq M
\left|\K{n}\left[\frac{t^{k-1}}{(k-1)!}\right]\right|+ M^2\,
\left|\K{n-1}\left[\frac{t^k}{k!}\right]\right|.
\end{equation*}
By induction hypothesis and using \eqref{zero-mon} again, we
get that for all $k\leq n+1$,
$\left|\K{n+1}\left[\frac{t^k}{k!}\right](0)\right|< M\,
(2M)^{n}+ M^2 (2M)^{n-1}<(2M)^{n+1}$.
\end{proof}

\begin{corollary}[\!\!\cite{IG5}]\label{poly-bdd} Let $\mom$ be weakly bounded;
then for every fixed $n$
\[
\LIM{k}\left|(\K{n}\circ \dd^{k})
\left[\mm\right](0)/k!\right|^{1/k}=0,
\]
and the convergence is uniform in $n$.
\end{corollary}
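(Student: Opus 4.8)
The plan is to read the estimate off directly from Lemma~\ref{bounds}, which already contains all the work. By inequality \eqref{b-bound}, for all $n$ and $k$ we have $|(\K{n}\circ\dd^{k})[\mm](0)|\leq (2M)^{k}(k+r)!^{\,p}$, where $M\geq 1$, $r\geq 0$ and $0\leq p<1$ are the constants furnished by \eqref{one-weak}. The crucial feature of this bound is that it does not depend on $n$; so once I show that it forces the $k$-th roots to tend to $0$, uniformity in $n$ will be automatic.

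Concretely, first I would take $k$-th roots: for every $n$,
\[
\left|(\K{n}\circ\dd^{k})[\mm](0)/k!\right|^{1/k}\leq 2M\left(\frac{(k+r)!^{\,p}}{k!}\right)^{1/k},
\]
so it suffices to prove that the right-hand side tends to $0$ as $k\to\infty$. Next I would dispose of the fixed shift $r$: since $(k+r)!=(k+1)(k+2)\cdots(k+r)\,k!\leq (k+r)^{r}\,k!$, raising to the power $p\geq 0$ gives $(k+r)!^{\,p}\leq (k+r)^{rp}\,k!^{\,p}$, and hence
\[
\left(\frac{(k+r)!^{\,p}}{k!}\right)^{1/k}\leq (k+r)^{rp/k}\,(k!)^{(p-1)/k}.
\]
As $k\to\infty$ the first factor tends to $1$, because $\tfrac{rp}{k}\log(k+r)\to 0$. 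For the second factor, $(k!)^{1/k}\to\infty$ (for instance $(k!)^{1/k}\sim k/\e$ by Stirling), and since $p-1<0$ this yields $(k!)^{(p-1)/k}\to 0$. Combining, $\left(\frac{(k+r)!^{\,p}}{k!}\right)^{1/k}\to 0$, and therefore
\[
\LIM{k}\left|(\K{n}\circ\dd^{k})[\mm](0)/k!\right|^{1/k}=0 .
\]
Because the bounding sequence $2M\,\big((k+r)!^{\,p}/k!\big)^{1/k}$ does not involve $n$, the convergence is uniform in $n$, as claimed.

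There is no real obstacle here: the whole substance of the corollary is packaged in Lemma~\ref{bounds}, and the remaining argument is elementary factorial asymptotics. The only point that deserves a word of care is that weak boundedness enters \emph{only} through the exponent $p<1$ in \eqref{one-weak} — it is precisely $p<1$ that makes $k!^{\,p}$ negligible against $k!$ after taking $k$-th roots — while the polynomial factor $(k+r)^{rp}$ coming from the fixed shift $r$ contributes nothing to the limit.
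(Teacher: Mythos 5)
Your proof is correct and follows essentially the same route as the paper: both arguments simply take $k$-th roots of the bound \eqref{b-bound}, absorb the fixed shift $r$ into a polynomial factor whose $k$-th root tends to $1$, and use $(k!)^{1/k}\to\infty$ together with $p<1$ to kill the remaining factor, with uniformity in $n$ coming for free because the bound is independent of $n$. No gaps.
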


\begin{proof}  Let $R(k)=(k+r)!/k!$; then $R(k)$ is a polynomial
of degree $r$, and, by \eqref{b-bound},
\begin{equation}\label{poly-bound-eq}
\left|\frac{(\K{n}\circ \dd^k)\left[\mm\right](0)}{k!}\right|^{1/k}\leq
\frac{2M R(k)^{p/k}}{k!^{(1-p)/k}} <\frac{2M{\e}^{1-p}\; R(k)^{p/k}}{k^{1-p}}.
\end{equation}
\end{proof}

\begin{corollary}[\!\!\cite{IG5}]\label{moments-asymptotics}
Let $\mm(z)$ correspond to a weakly bounded moment functional
\mom; then
\begin{equation}\label{entire}
\LIM{k}\left(\frac{\mu_k}{k!}\right)^{1/k}=
\LIM{k}\left|\frac{\mm^{k}(0)}{k!}\right|^{1/k}
=0.\end{equation}
Thus, since \eqref{limsup} is satisfied with $\rho=0$,
every weakly bounded moment functional is chromatic.
\end{corollary}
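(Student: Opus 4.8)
The plan is to read \eqref{entire} off the estimate \eqref{b-bound} of Lemma~\ref{bounds} in the special case $n=0$. Since $\K{0}$ is the identity operator and $\dd^{k}[\mm](0)=\mm^{(k)}(0)$, \eqref{b-bound} with $n=0$ gives $|\mm^{(k)}(0)|\leq (2M)^{k}(k+r)!^{p}$ for every $k$, where $M\geq 1$, $r\geq 0$, $0\leq p<1$ are the constants from \eqref{one-weak}. (Note that Lemma~\ref{bounds} and Corollary~\ref{poly-bdd} presuppose only weak boundedness, not that \mom\ is chromatic, so there is no circularity.) Dividing by $k!$, taking $k$-th roots and letting $k\to\infty$ is precisely the computation already done in the proof of Corollary~\ref{poly-bdd}, in the estimate \eqref{poly-bound-eq}, with $n=0$: writing $R(k)=(k+r)!/k!$ one has
\[
\left|\frac{\mm^{(k)}(0)}{k!}\right|^{1/k}\leq\frac{2M\,{\e}^{1-p}\,R(k)^{p/k}}{k^{1-p}},
\]
and the right-hand side tends to $0$ because $R(k)$ is a polynomial in $k$, so $R(k)^{p/k}\to 1$, while $k^{1-p}\to\infty$. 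Hence $\LIM{k}|\mm^{(k)}(0)/k!|^{1/k}=0$.

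Next I would transfer this from $\mm^{(k)}(0)$ to the moments $\mu_{k}$. By \eqref{dm}, $\mm^{(2n)}(0)=(-1)^{n}\mu_{2n}$ and $\mm^{(2n+1)}(0)=0$; moreover $\mu_{2n+1}=0$ because weakly bounded functionals are symmetric and positive definite, as observed right after Definition~\ref{def-weak}. Thus along even indices $k=2n$ the quantities $(\mu_{k}/k!)^{1/k}$ and $|\mm^{(k)}(0)/k!|^{1/k}$ coincide (using $\mu_{2n}>0$), and along odd indices both vanish; the limit computed in the previous paragraph therefore yields both equalities in \eqref{entire}.

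It remains to conclude that \mom\ is chromatic. Formula \eqref{limsup} identifies $\rho$ with $\limsup_{k\to\infty}(\mu_{k}/k!)^{1/k}$, so \eqref{entire} forces $\rho=0$. A nonnegative sequence whose limit superior is $0$ converges to $0$ and is in particular bounded, so $\{\mu_{n}^{1/n}/n\}_{n\in\Nset}$ is bounded; together with the symmetry, positive definiteness and normalization $\mu_{0}=1$ that weak boundedness already supplies, this is exactly the definition of a chromatic moment functional, now with $\rho=0$.

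I do not expect a genuine obstacle here: all the analytic content is packaged in Lemma~\ref{bounds} and Corollary~\ref{poly-bdd}, which are already established. The only points needing a little care are (i) upgrading a $\limsup$ statement to an honest limit, which the even/odd split handles since $\mm^{(2n+1)}(0)=\mu_{2n+1}=0$, and (ii) recalling that the definition of weak boundedness was arranged to entail symmetry and positive definiteness, which one needs before the word ``chromatic'' even makes sense.
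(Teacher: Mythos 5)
Your proposal is correct and follows essentially the same route the paper intends: specialize \eqref{b-bound} (equivalently Corollary~\ref{poly-bdd} with $n=0$, where $\K{0}$ is the identity since $\PP{0}{\omega}=1$) to bound $|\mm^{(k)}(0)/k!|^{1/k}$, identify $|\mm^{(2n)}(0)|=\mu_{2n}$ and $\mm^{(2n+1)}(0)=\mu_{2n+1}=0$ via \eqref{dm} and symmetry, and conclude $\rho=0$ so that \mom\ is chromatic. Your added care about non-circularity and about upgrading the $\limsup$ to a genuine limit is consistent with, and fills in, what the paper leaves implicit.
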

Note that this and Proposition~\ref{anaz} imply that $\mm(z)$
is an entire function. If \eqref{one-weak} holds with $p=1$,
then Lemma~\ref{bounds} implies only
\begin{equation}\label{p=1}
\limsup_{k\rightarrow\infty}\left|\frac{(\K{n}\circ
\dd^{k})\left[\mm\right](0)}{k!}\right|^{1/k}\leq 2M.
\end{equation}
Example 4 shows that  in this case the corresponding function
$\mm(z)$ need not be entire. Thus, if we are interested in
chromatic expansions of entire functions, the upper bound in
\eqref{one-weak} of the definition of a weakly bounded moment
functional is sharp.

Lemma~\ref{bounds}  and Proposition~\ref{complete}
imply the following Corollary.
\begin{corollary}[\!\!\cite{IG5}]
If \mom\ is weakly bounded, then the corresponding family of
polynomials $\PPP$ is a complete system in $\LT$.
\end{corollary}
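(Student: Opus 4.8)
The plan is to reduce the statement to two facts already recorded in the excerpt: that a weakly bounded moment functional is chromatic, and that for a chromatic moment functional the orthonormal family $\PPP$ is complete in $\LT$.

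First I would invoke Corollary~\ref{moments-asymptotics}, which rests on the estimate \eqref{b-bound} of Lemma~\ref{bounds} and yields $\LIM{k}(\mu_k/k!)^{1/k}=0$ for any weakly bounded $\mom$. In particular the number $\rho$ defined in \eqref{limsup} equals $0<\infty$, so $\mom$ is chromatic in the sense of the definition in Section~\ref{sec3}. Then, $\mom$ now being chromatic, Lemma~\ref{complete} applies directly and gives that $\PPP$ is a complete system in $\LT$; chaining the two implications proves the Corollary.

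The only point needing attention — a light one — is verifying that the hypotheses of Lemma~\ref{bounds} are in force. By Definition~\ref{def-weak}, a weakly bounded functional comes equipped with constants $M\ge 1$, $0\le p<1$ and an integer $r\ge 0$ satisfying $1/M\le\gamma_n\le M(n+r)^p$, which is precisely \eqref{one-weak}; hence \eqref{b-bound} holds and Corollary~\ref{moments-asymptotics} can be quoted verbatim. The condition $p<1$ is exactly what is used there: it makes $k!^{(1-p)/k}\to\infty$, which forces the limit in \eqref{entire} to vanish. No computation beyond what is already displayed is required; if one preferred a self-contained derivation one could instead specialize \eqref{poly-bound-eq} to $n=0$ to get $(\mu_k/k!)^{1/k}\le (\text{const})\,k^{-(1-p)}\to 0$, but citing Corollary~\ref{moments-asymptotics} is cleaner. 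I expect no genuine obstacle here; the content of the Corollary is entirely inherited from the quantitative bounds of Lemma~\ref{bounds} together with Riesz's completeness criterion underlying Lemma~\ref{complete}.
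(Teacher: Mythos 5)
Your proposal is correct and follows the paper's own route: the paper likewise deduces the corollary by combining Lemma~\ref{bounds} (via Corollary~\ref{moments-asymptotics}, which gives $\rho=0$, so a weakly bounded functional is chromatic) with the Riesz completeness criterion packaged in Lemma~\ref{complete}. Nothing further is needed.
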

Thus, we get that the Chebyshev, Legendre,  Hermite and
similar classical families of orthogonal polynomials are complete
in their corresponding spaces $\LT$.

To simplify our estimates, we choose $K\geq 1$ such that for
$p$, $M$ and $r$ as in Definition~\ref{def-weak} for all $k>0$,
we have
\begin{equation}\label{K}
\frac{(2M)^{k} (k + r)!^{p}}{k!^p} <K^{k}.
\end{equation}
The following Lemma slightly improves a result from \cite{IG5}.
\begin{lemma}\label{bounds1} Let \mom\  be weakly bounded and
$p<1$ and $K\geq 1$ such that \eqref{one-weak} and \eqref{K}
hold. Let also $k$ be an integer such that $k\geq
1/(1-p)$. Then there exists a polynomial $P(x)$ of degree $k-1$
such that for every $n$ and every $z\in \Cset$,
\begin{equation}\label{bound-Knb}
|\K{n}[\mm](z)|< \frac{|K z|^{n}}{n!^{1-p}}P(|z|)\,{\e}^{|K z|^k}.
\end{equation}
\end{lemma}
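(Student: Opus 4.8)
The plan is to estimate $\K{n}[\mm](z)$ through its Taylor coefficients at the origin. Since $\mom$ is weakly bounded, Corollary~\ref{moments-asymptotics} shows that $\mm(z)$ is entire, hence so is $\K{n}[\mm](z)$, and it coincides with its Taylor expansion $\K{n}[\mm](z)=\sum_{m\ge 0}(\dd^{m}\circ\K{n})[\mm](0)\,z^{m}/m!$. By \eqref{bkn} only the indices $m$ with $m\ge n$ and $m-n$ even contribute, so, writing $m=n+2l$ and using the triangle inequality together with \eqref{b-bound} (with derivative order $n+2l$) and then \eqref{K},
\[
|\K{n}[\mm](z)|\ \le\ \sum_{l=0}^{\infty}\frac{(2M)^{n+2l}(n+2l+r)!^{p}}{(n+2l)!}\,|z|^{n+2l}\ \le\ \sum_{l=0}^{\infty}\frac{(K|z|)^{n+2l}}{(n+2l)!^{1-p}}.
\]

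Next I would isolate the leading factor and reduce to a series that does not involve $n$. Factoring out $|Kz|^{n}/n!^{1-p}$ and using $n!/(n+2l)!=\bigl((n+1)(n+2)\cdots(n+2l)\bigr)^{-1}\le\bigl((2l)!\bigr)^{-1}$, the remaining sum over $l$ is dominated by $\sum_{l\ge 0}(K|z|)^{2l}/(2l)!^{1-p}$, which, all terms being nonnegative, is in turn at most $E(x):=\sum_{m\ge 0}x^{m}/m!^{1-p}$ evaluated at $x:=K|z|$. Thus $|\K{n}[\mm](z)|\le\bigl(|Kz|^{n}/n!^{1-p}\bigr)\,E(K|z|)$, and it remains only to majorize $E(x)$ by $P_{0}(x)\,\e^{x^{k}}$ for a polynomial $P_{0}$ of degree $k-1$.

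For this last point the idea is to split the series according to residues modulo $k$: writing $m=lk+j$ with $0\le j\le k-1$, the multinomial inequality $(lk)!\ge(l!)^{k}$ and the hypothesis $k(1-p)\ge1$ give $(lk+j)!^{1-p}\ge(lk)!^{1-p}\ge(l!)^{k(1-p)}\ge l!$, so that
\[
E(x)=\sum_{j=0}^{k-1}\sum_{l=0}^{\infty}\frac{x^{lk+j}}{(lk+j)!^{1-p}}\ \le\ \sum_{j=0}^{k-1}x^{j}\sum_{l=0}^{\infty}\frac{(x^{k})^{l}}{l!}\ =\ \Bigl(\sum_{j=0}^{k-1}x^{j}\Bigr)\e^{x^{k}}.
\]
Substituting $x=K|z|$ yields the asserted bound with $P(|z|)=\sum_{j=0}^{k-1}(K|z|)^{j}$, a polynomial in $|z|$ of degree exactly $k-1$. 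The inequality, trivially valid at $z=0$, becomes strict for $z\neq0$ because \eqref{K} is strict for every positive derivative order while the majorizing series converges, so infinitely many of the majorized terms are strictly smaller.

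I expect the only genuinely delicate step to be the decomposition of $E(x)$: one must arrange the split modulo $k$ so that the polynomial prefactor has degree precisely $k-1$ and the exponential factor carries $x^{k}$, and the hypothesis $k\ge1/(1-p)$ is exactly what makes the order-$1/(1-p)$ growth of $E$ absorbable into $\e^{x^{k}}$ via $(l!)^{k(1-p)}\ge l!$. Everything else — the entirety of $\mm$, termwise differentiation, and the elementary factorial estimates $n!/(n+2l)!\le 1/(2l)!$ and $(lk)!\ge(l!)^{k}$ — is routine once Corollary~\ref{moments-asymptotics} and Lemma~\ref{bounds} are in hand.
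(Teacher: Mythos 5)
Your argument is correct and essentially the paper's own: Taylor-expand $\K{n}[\mm](z)$ at the origin, control the coefficients via \eqref{bkn}, \eqref{b-bound} and \eqref{K}, and then absorb the resulting series $\sum_m x^m/m!^{1-p}$ into a degree-$(k-1)$ polynomial times ${\e}^{x^k}$ by grouping indices modulo $k$ and using $k(1-p)\ge 1$. Your tail estimate is a slightly cleaner variant of the same step---keeping the exact powers $x^j$ avoids the paper's case split $|Kz|\ge 1$ versus $|Kz|<1$ and yields $P(x)=\sum_{j=0}^{k-1}(Kx)^j$ instead of $k(|Kz|^{k-1}+\e)$---and the only blemish, strictness of \eqref{bound-Knb} at $z=0$, is an edge case the paper's own choice of $P$ does not satisfy either (both sides vanish there for $n\ge 1$), so it is immaterial.
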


\begin{proof} Using the Taylor series for $\K{n}[\mm](z)$,
\eqref{bkn},  \eqref{b-bound} and \eqref{K}, we get that for $z$ such
that $|Kz|\geq1$,
\begin{eqnarray*}
|\K{n}[\mm](z)|&<&\sum_{m=0}^{\infty}\frac{|K
z|^{n+m}}{(n+m)!^{1-p}}\ \leq\ \frac{|K z|^{n}}{n!^{1-p}}
\sum_{m=0}^{\infty}\frac{|K z|^{m}}{m!^{1/k}} \\
&<& \frac{|K z|^{n}}{n!^{1-p}}
\sum_{m=0}^{\infty}\frac{|K z|^{k\lfloor m/k\rfloor+k-1}}{\lfloor
m/k\rfloor !}\\
&=& k\;\frac{|K z|^{n+k-1}}{n!^{1-p}}
\sum_{j=0}^{\infty}\frac{|K z|^{kj}}{j!}\\
& = & k\;\frac{|K z|^{n+k-1}}{n!^{1-p}}\,
{\e}^{|Kz|^k}.
\end{eqnarray*}
If $|K z|<1$, then a similar calculation shows that for such $z$
we have $|\K{n}[\mm](z)|<k\,\e{|K z|^{n}}/{n!^{1-p}}$.
The claim now follows with
$P(|z|)=k(|K z|^{k-1}+\e)$.
\end{proof}

\subsection{Local convergence of chromatic expansions}
\begin{proposition}\label{expansionK}
Let \mom\ be weakly bounded, $p<1$ as in \eqref{one-weak},
$f(z)$ a function analytic on a domain $G\subseteq\Cset$ and
$u\in G$.
\begin{enumerate}
\item If the sequence $|\K{n}(u)/n!^{1-p}|^{1/n}$ is bounded,
    then the chromatic expansion $\CE[f,u](z)$ of
    $f(z)$ converges uniformly to $f(z)$ on
    a disc $D\subseteq G$, centered at $u$.

\item In particular, if $|\K{n}(u)/n!^{1-p}|^{1/n}$
    converges to zero, then the chromatic expansion
    $\CE[f,u](z)$ of $f(z)$ converges for every $z\in G$
    and the convergence is uniform on every finite closed
    disc around $u$, contained in $G$.
\end{enumerate}
\end{proposition}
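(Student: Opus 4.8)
The plan is to reduce both claims to the pointwise estimate on $\K{n}[\mm]$ furnished by Lemma~\ref{bounds1}, combined with the fact, recorded in \eqref{k-to-d}, that the chromatic approximation $\CA[f,n,u]$ matches the first $n$ Taylor coefficients of $f$ at $u$. First I would fix once and for all an integer $k\geq 1/(1-p)$ and a constant $K\geq 1$ as in \eqref{K}, so that Lemma~\ref{bounds1} supplies a fixed polynomial $P$ with
\[
|\K{n}[\mm](z-u)|<\frac{|K(z-u)|^{n}}{n!^{1-p}}\,P(|z-u|)\,{\e}^{|K(z-u)|^{k}}\qquad(n\in\Nset,\ z\in\Cset).
\]
Writing $a_{n}=|\K{n}[f](u)/n!^{1-p}|^{1/n}$, the $n$-th term of the chromatic series $\CE[f,u](z)=\sum_{n}(-1)^{n}\K{n}[f](u)\,\K{n}[\mm](z-u)$ (the second form in \eqref{cex}) is then dominated in modulus by $\bigl(a_{n}|K(z-u)|\bigr)^{n}$ times the factor $P(|z-u|)\,{\e}^{|K(z-u)|^{k}}$, which is bounded on every bounded set.

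For (1), suppose $a_{n}\leq C$ for all $n$; one may plainly assume $C>0$. I would choose $\rho_{0}>0$ with $\rho_{0}<1/(CK)$ and $\overline{B(u,\rho_{0})}\subseteq G$; on $\overline{B(u,\rho_{0})}$ the majorant above is at most a constant times $\sum_{n}(CK\rho_{0})^{n}<\infty$, so the Weierstrass $M$-test gives absolute and uniform convergence of $\CE[f,u]$ on $\overline{B(u,\rho_{0})}$ to some function $g$. To identify $g$ with $f$ I would note that each $\CA[f,n,u]$ is entire (by Corollary~\ref{moments-asymptotics} the function $\mm$, hence each $\K{n}[\mm]$, is entire), so $g$ is analytic on $D:=B(u,\rho_{0})$, and that uniform convergence on $\overline{B(u,\rho_{0})}$ propagates via Cauchy's estimates to convergence of all derivatives at the centre $u$; hence $g^{(m)}(u)=\LIM{n}\dd^{m}_{t}[\CA[f,n,u](t)]\big|_{t=u}=f^{(m)}(u)$ for every $m$, using \eqref{k-to-d} for $n\geq m$. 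Thus $f$ and $g$ have the same Taylor series at $u$, and since both are analytic on $D$ they coincide there; this is the asserted disc.

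For (2), suppose $a_{n}\to0$. Then for each fixed $z$ we have $a_{n}|K(z-u)|\to0$, so the term bound shows $\CE[f,u](z)$ converges absolutely for every $z\in\Cset$, a fortiori on $G$; and on a closed disc $\overline{B(u,R)}$ of arbitrary radius $R>0$ the quantity $a_{n}KR\to0$ provides a $z$-independent summable majorant, giving uniform convergence there. The limit is entire; since $a_{n}\to0$ implies $(a_{n})$ is bounded, part (1) applies and the limit agrees with $f$ on $D$, hence on all of the domain $G$ by the identity theorem. In particular the chromatic series converges to $f$ uniformly on every closed disc around $u$ contained in $G$.

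The only step that is not a routine application of the $M$-test is the identification of the sum with $f$ rather than with some a priori unrelated analytic function; this rests on the derivative-matching identity \eqref{k-to-d} (itself a consequence of \eqref{orthonorm} and \eqref{bkn}), the standard transfer of locally uniform convergence of analytic functions to their derivatives, and, in (2), the identity theorem on $G$. I expect this identification, not the convergence estimate, to be the part requiring care.
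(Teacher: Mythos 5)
Your proposal is correct and follows essentially the same route as the paper's proof: the bound of Lemma~\ref{bounds1} yields a geometric majorant giving uniform convergence on a disc of radius less than $1/(RK)$, and the limit is identified with $f$ by matching derivatives at $u$. You are merely more explicit than the paper at the identification step (using \eqref{k-to-d}, Cauchy estimates, and, for part (2), the identity theorem on $G$), where the paper instead applies $\K{j}$ termwise and invokes \eqref{orthonorm} directly.
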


\begin{proof}
Assume that $R$ is such that $\limsup_{n\rightarrow
\infty}|\K{n}[f](u)/n!^{1-p}|^{1/n}<R$. Then
$|\K{n}[f](u)|<R^nn!^{1-p}$ for all sufficiently large $n$. Let
$K$ and $k$ be such that \eqref{bound-Knb} holds;
then Lemma \ref{bounds1} implies that for all sufficiently large $n$,
\[
|\K{n}[f](u)\K{n}[\mm](z-u)|^{1/n}<RK(P(|z-u|){\e}^{|K(z-u)|^k})^{1/n}|z-u|.
\]
Thus, the chromatic series converges uniformly inside every disc
$D\subseteq G$,  centered at $u$, of radius less than $1/(RK)$.
Since
\begin{equation*}
\K{j}[\CA[f,u](z)]\big|_{z=u}=\sum_{n=0}^{\infty}
(-1)^n\K{n}[f](u)(\K{j}\circ\K{n})[\mm](0)=\K{j}[f](u),
\end{equation*}
$\CA[f,u](z)$ converges to $f(z)$ on $D$.
\end{proof}
\begin{lemma} Let $M$ be as in Definition \ref{def-weak}. Then
\[
\limsup_{n\rightarrow \infty}\left|\frac{\K{n}[f](u)}{n!^{1-p}}
\right|^{1/n} \leq 2M\limsup_{n\rightarrow \infty}
\left|\frac{f^{(n)}(u)}{n!^{1-p}}\right|^{1/n}.
\]
\end{lemma}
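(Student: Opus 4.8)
The plan is to combine the representation \eqref{direct} of $\K{n}$ in the standard basis of differential operators with the uniform coefficient bound \eqref{mono-bound}. Evaluating \eqref{direct} on $f$ at the point $u$ gives
\[
\K{n}[f](u)=\sum_{k=0}^{n}\K{n}\!\left[\frac{t^{k}}{k!}\right]\!(0)\;f^{(k)}(u),
\]
and \eqref{mono-bound} then yields $|\K{n}[f](u)|\le (2M)^{n}\sum_{k=0}^{n}|f^{(k)}(u)|$. So the entire problem reduces to estimating the partial sums $\sum_{k=0}^{n}|f^{(k)}(u)|$ in terms of $L:=\limsup_{n\to\infty}\bigl|f^{(n)}(u)/n!^{1-p}\bigr|^{1/n}$.

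If $L=\infty$ there is nothing to prove, so I would assume $L<\infty$ and fix an arbitrary $R>L$ with $R>0$. Then $|f^{(n)}(u)|\le R^{n}n!^{1-p}$ for all $n$ beyond some index $N_{1}$. Since $p<1$, consecutive terms of the sequence $R^{k}k!^{1-p}$ have ratio $R(k+1)^{1-p}$, which exceeds $1$ once $k$ passes some $N_{2}$; setting $N=\max(N_{1},N_{2})$, for every large $n$ and every $k$ with $N\le k\le n$ we get $|f^{(k)}(u)|\le R^{k}k!^{1-p}\le R^{n}n!^{1-p}$, while the $N$ initial terms contribute only a fixed constant $C$. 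Hence $\sum_{k=0}^{n}|f^{(k)}(u)|\le C+(n+1)R^{n}n!^{1-p}$, and since $n!^{1-p}\to\infty$ the constant is eventually absorbed, giving
\[
|\K{n}[f](u)|\le 2(n+1)\,(2MR)^{n}\,n!^{1-p}
\]
for all sufficiently large $n$.

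Dividing by $n!^{1-p}$ and taking $n$-th roots yields $\bigl|\K{n}[f](u)/n!^{1-p}\bigr|^{1/n}\le 2MR\,(2(n+1))^{1/n}$, whose limit superior is $2MR$. Letting $R\downarrow L$ gives the stated inequality. I do not anticipate a genuine obstacle; the only step needing care is the bookkeeping in the middle paragraph, where the cutoff index $N$ must be taken large enough that the tail estimate $|f^{(k)}(u)|\le R^{k}k!^{1-p}$ and the eventual monotonicity of $k\mapsto R^{k}k!^{1-p}$ hold at once --- routine, but worth stating explicitly.
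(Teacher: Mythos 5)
Your proof is correct and follows essentially the same route as the paper: both apply \eqref{direct} together with the coefficient bound \eqref{mono-bound}, reduce to estimating $\sum_{k=0}^{n}|f^{(k)}(u)|$ under the hypothesis $|f^{(k)}(u)|\lesssim R^{k}k!^{1-p}$, and conclude that $|\K{n}[f](u)|$ is bounded by $(2MR)^{n}n!^{1-p}$ up to a factor that is negligible after taking $n$-th roots. The only difference is bookkeeping: the paper absorbs all $k$ into one constant $B_{\beta}$ and sums a crude geometric-type series, while you use eventual monotonicity of $R^{k}k!^{1-p}$ to bound the sum by $(n+1)$ times its last term --- an equivalent estimate.
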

\begin{proof}
Let $\beta>0$ be any number such that
$\limsup_{n\rightarrow
\infty}|{f^{(n)}(u)}/{n!^{1-p}}|^{1/n}<\beta$; then there
exists $B_{\beta}\geq 1$ such that $|f^{(k)}(u)|\leq
B_{\beta}\;k!^{1-p}\beta^k$ for all $k$.  Using \eqref{direct}
and \eqref{mono-bound} we get
\begin{eqnarray*}
|\K{n}[f](u)| &\leq& \sum_{k=0}^n\left|\K{n}
\left[\frac{t^k}{k!}\right](0)\right||f^{(k)}(u)|
\leq (2M)^nB_{\beta}\sum_{k=0}^n\;k!^{1-p}\beta^k\\
&<& (2M)^nB_{\beta}\sum_{k=0}^n\;\left(\frac{n}{e}\right)^{(k+1)(1-p)}\beta^k.
\end{eqnarray*}
Summation of the last series shows that
$|\K{n}[f](u)|<2B_{\beta}(2M\beta)^nn!^{1-p}$ for sufficiently large $n$.
\end{proof}

\begin{corollary}\label{expansionD}
Let \mom\ be weakly bounded, $p<1$ as in \eqref{one-weak},
$f(z)$ a function analytic on a domain $G\subseteq\Cset$ and
$u\in G$. \begin{enumerate}
\item
    If the sequence $|f^{(n)}(u)/n!^{1-p}|^{1/n}$ is bounded,
    then the chromatic expansion $\CE[f,u](z)$ of
    $f(z)$ converges uniformly to $f(z)$ on
    a disc $D\subseteq G$ centered at $u$ .

\item
    In particular, if $|f^{(n)}(u)/n!^{1-p}|^{1/n}$ converges
    to zero, then the chromatic expansion $\CE[f,u](z)$ of $f(z)$
    converges for all $z\in G$ and the convergence
    is uniform on every closed disc around $u$, contained in $G$.
\end{enumerate}
\end{corollary}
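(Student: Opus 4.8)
The plan is to obtain this corollary as an immediate consequence of Proposition~\ref{expansionK} together with the Lemma stated immediately before it. That Lemma already carries all the weight: it establishes
$\limsup_{n\rightarrow\infty}|\K{n}[f](u)/n!^{1-p}|^{1/n}\leq 2M\,\limsup_{n\rightarrow\infty}|f^{(n)}(u)/n!^{1-p}|^{1/n}$,
so a growth bound on the ordinary derivatives $f^{(n)}(u)$ transfers to a growth bound of the same type on the chromatic derivatives $\K{n}[f](u)$ — which is precisely the hypothesis demanded by Proposition~\ref{expansionK}.

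First, for part (1), I would assume the sequence $|f^{(n)}(u)/n!^{1-p}|^{1/n}$ is bounded, hence has finite $\limsup$. By the preceding Lemma the sequence $|\K{n}[f](u)/n!^{1-p}|^{1/n}$ then also has finite $\limsup$; since $f$ is analytic at $u$, each $f^{(k)}(u)$ is finite, and by \eqref{direct} each $\K{n}[f](u)$ is a finite linear combination of such values, so every term of the sequence is finite, and a sequence of finite nonnegative reals with finite $\limsup$ is bounded. Proposition~\ref{expansionK}(1) then supplies a disc $D\subseteq G$ centered at $u$ on which $\CE[f,u](z)$ converges uniformly to $f(z)$.

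Next, for part (2), I would assume $|f^{(n)}(u)/n!^{1-p}|^{1/n}\rightarrow 0$, so its $\limsup$ is $0$; the Lemma then forces $\limsup_{n\rightarrow\infty}|\K{n}[f](u)/n!^{1-p}|^{1/n}\leq 2M\cdot 0=0$, i.e. $|\K{n}[f](u)/n!^{1-p}|^{1/n}\rightarrow 0$. Proposition~\ref{expansionK}(2) then yields convergence of $\CE[f,u](z)$ to $f(z)$ for every $z\in G$, uniformly on every closed disc around $u$ contained in $G$.

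I expect no real obstacle here: the only points to watch are purely bookkeeping, namely that ``bounded sequence'' may be replaced by ``finite $\limsup$'' since all terms are finite, and that the disc furnished by Proposition~\ref{expansionK} indeed lies inside $G$ — but the latter is already part of that proposition's conclusion, resting on the analyticity of $f$ on $G$. Thus the corollary reduces to a one-line deduction once the Lemma and Proposition~\ref{expansionK} are in place.
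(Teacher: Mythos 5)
Your deduction is exactly the paper's intended route: the Lemma preceding Proposition~\ref{expansionK} is stated there precisely to transfer the growth condition from $f^{(n)}(u)$ to $\K{n}[f](u)$, after which Proposition~\ref{expansionK} gives both parts of the corollary (the paper leaves this one-line reduction implicit). Your bookkeeping remarks about finite $\limsup$ versus boundedness are fine and do not change the argument.
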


\begin{corollary}[\!\!\cite{IG5}]\label{bounded-exp} If \mom\ is
bounded, then for every entire function $f$ and all $u,z\in
\Cset$, the chromatic expansion $CE[f,u](z)$ converges to
$f(z)$ for all $z$, and the convergence is uniform on every
disc around $u$ of finite radius.
\end{corollary}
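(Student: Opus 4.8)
The plan is to obtain this as a direct specialization of Corollary~\ref{expansionD}. Since every bounded moment functional is weakly bounded with $p=0$, Corollary~\ref{expansionD} is available with that value of $p$ and with $G=\Cset$ (which is a domain, and $f$ entire means $f$ is analytic on $G$). The only thing left to verify is the hypothesis of part~(2) of that Corollary: that $\bigl|f^{(n)}(u)/n!^{1-p}\bigr|^{1/n}=\bigl|f^{(n)}(u)/n!\bigr|^{1/n}$ converges to $0$ for every entire $f$ and every $u\in\Cset$.

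For this I would invoke the Cauchy--Hadamard formula: the Taylor series $\sum_{n} f^{(n)}(u)(z-u)^n/n!$ of $f$ about $u$ has radius of convergence $1/\limsup_{n\to\infty}\bigl|f^{(n)}(u)/n!\bigr|^{1/n}$. Since $f$ is entire this radius is infinite, so $\limsup_{n\to\infty}\bigl|f^{(n)}(u)/n!\bigr|^{1/n}=0$, and as the terms are nonnegative the limit exists and equals $0$. (If one prefers to avoid citing Cauchy--Hadamard, the Cauchy estimates $|f^{(n)}(u)|\le n!\,M(\rho)/\rho^n$, where $M(\rho)=\max_{|z-u|=\rho}|f(z)|$, give $\limsup_n\bigl|f^{(n)}(u)/n!\bigr|^{1/n}\le 1/\rho$ for every $\rho>0$, hence the limsup is $0$.) Thus the hypothesis of Corollary~\ref{expansionD}(2) holds.

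Applying Corollary~\ref{expansionD}(2) with $p=0$ and $G=\Cset$, the chromatic expansion $\CE[f,u](z)$ converges for every $z\in\Cset$, and the convergence is uniform on every closed disc centered at $u$ of finite radius, since each such disc is contained in $G=\Cset$. Moreover, the proof of Proposition~\ref{expansionK} shows that on each such disc the limit of the chromatic series is $f$ itself, because the partial sums $\CA[f,n,u]$ satisfy $\K{j}_z[\CA[f,n,u](z)]\big|_{z=u}=\K{j}[f](u)$ for $j\le n$. Hence $\CE[f,u](z)=f(z)$ for all $z\in\Cset$, and the convergence is uniform on every disc around $u$ of finite radius, which is the claim.

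I do not expect a genuine obstacle here: the statement is essentially the $p=0$ instance of Corollary~\ref{expansionD}, and the only input specific to this case is the elementary fact, recalled above, that an entire function satisfies $\bigl|f^{(n)}(u)/n!\bigr|^{1/n}\to 0$. The mild care needed is simply to match the hypotheses — confirming that ``bounded'' supplies $p=0<1$ and that $\Cset$ qualifies as the domain $G$ — and to note that exhausting $\Cset$ by closed discs around $u$ upgrades the disc-wise convergence to convergence everywhere.
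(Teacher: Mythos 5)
Your proof is correct and follows essentially the same route as the paper: the paper also deduces the Corollary from Corollary~\ref{expansionD} with $p=0$, using the fact that every entire function satisfies $\lim_{n\to\infty}|f^{(n)}(u)/n!|^{1/n}=0$. Your write-up merely fills in the justification of that limit (Cauchy--Hadamard or the Cauchy estimates), which the paper takes as standard.
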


\begin{proof}
If $f(z)$ is entire, then for every $u$, $\LIM{n}
|f^{(n)}(u)/n!|^{1/n}=0$. The Corollary now follows from
Corollary~\ref{expansionD} with $p=0$.
\end{proof}

\begin{proposition}\label{exp-type}
Assume \mom\ is weakly bounded and let $0\leq p <1$ be such
that \eqref{one-weak} holds, and $k$ such that $k\geq 1/(1-p)$.
Then there exists $C,L>0$ such that $|f(z)|\leq
C\norm{f}{\e}^{L|z|^k}$ for all $f(z)\in\LB$.
\end{proposition}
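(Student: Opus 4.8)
The plan is to combine the representation of $\LB$ from Proposition~\ref{rep} with the pointwise bound on $\K{n}[\mm]$ from Lemma~\ref{bounds1}. First I would recall that a weakly bounded $\mom$ is chromatic with $\rho=0$ (Corollary~\ref{moments-asymptotics}), so Proposition~\ref{rep} applies: any $f\in\LB$ has the form $f(z)=\sum_{n=0}^{\infty}a_n\K{n}[\mm](z)$ with $a=\Langle a_n\Rangle_{n\in\Nset}\in l^2$, and $a_n=(-1)^n\K{n}[f](0)$, so $\sum_n|a_n|^2=\norm{f}^2$ by \eqref{nor}. Applying the Cauchy--Schwarz inequality to the series gives
\[
|f(z)|\le\Big(\sum_{n=0}^{\infty}|a_n|^2\Big)^{\!1/2}\Big(\sum_{n=0}^{\infty}|\K{n}[\mm](z)|^2\Big)^{\!1/2}=\norm{f}\Big(\sum_{n=0}^{\infty}|\K{n}[\mm](z)|^2\Big)^{\!1/2},
\]
so it suffices to prove $\sum_{n}|\K{n}[\mm](z)|^2\le C^2{\e}^{2L|z|^k}$ for suitable constants $C,L$.

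Next I would substitute Lemma~\ref{bounds1}. With $K\ge1$ and $k\ge 1/(1-p)$ as there, squaring \eqref{bound-Knb} and summing yields
\[
\sum_{n=0}^{\infty}|\K{n}[\mm](z)|^2< P(|z|)^2\,{\e}^{2|Kz|^k}\sum_{n=0}^{\infty}\frac{|Kz|^{2n}}{n!^{2(1-p)}},
\]
so everything reduces to bounding the series $\Sigma(y)=\sum_{n}y^{2n}/n!^{2(1-p)}$ (an entire function of order $1/(1-p)$) by a constant times ${\e}^{2y^k}$, up to a polynomial factor that can be absorbed later. This estimate is the step I expect to be the main obstacle, and I would handle it by the blocking trick used inside the proof of Lemma~\ref{bounds1}. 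Since $k\ge1/(1-p)$ gives $2(1-p)\ge 2/k$, we have $n!^{2(1-p)}\ge\big((n!)^2\big)^{1/k}$, and from $\binom{2n}{n}\le4^n$, i.e.\ $(n!)^2\ge(2n)!/4^n$, it follows that
\[
\Sigma(y)\le\sum_{n=0}^{\infty}\frac{(2^{1/k}y)^{2n}}{((2n)!)^{1/k}}\le\sum_{m=0}^{\infty}\frac{w^m}{(m!)^{1/k}},\qquad w=2^{1/k}y.
\]
Writing $m=kj+s$ with $0\le s<k$ and using $m!\ge(kj)!\ge(j!)^k$ exactly as in Lemma~\ref{bounds1}, the last sum is at most $k\big(e+w^{k-1}\big){\e}^{w^k}=k\big(e+(2^{1/k}y)^{k-1}\big){\e}^{2y^k}$.

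Finally I would collect the constants. Taking $y=|Kz|$ above,
\[
\sum_{n=0}^{\infty}|\K{n}[\mm](z)|^2\le Q(|z|)\,{\e}^{4K^k|z|^k},
\]
where $Q$ is the polynomial $kP(|z|)^2\big(e+(2^{1/k}K|z|)^{k-1}\big)$. Since $k\ge1$, any polynomial in $|z|$ is dominated by a multiple of ${\e}^{|z|^k}$, so the right-hand side is at most $C''{\e}^{(4K^k+1)|z|^k}$; combined with the Cauchy--Schwarz bound this gives $|f(z)|\le C\norm{f}{\e}^{L|z|^k}$ with $C=\sqrt{C''}$ and $L=(4K^k+1)/2$. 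The only nonroutine ingredient beyond the cited results is the blocking estimate for $\Sigma(y)$; the rest is bookkeeping about constants.
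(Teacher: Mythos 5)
Your proposal is correct and follows essentially the same route as the paper: Cauchy--Schwarz applied to the chromatic expansion of $f$ together with the pointwise bound \eqref{bound-Knb} from Lemma~\ref{bounds1}, leaving only the series $\sum_n |Kz|^{2n}/n!^{2(1-p)}$ to be estimated. The paper dismisses that last estimate as following ``by the method from the proof of Lemma~\ref{bounds1},'' and your blocking argument (via $(n!)^2\ge (2n)!/4^n$ and $m=kj+s$) is a legitimate way of carrying out exactly that step, so the two proofs agree in substance.
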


\begin{proof}
Since $f(z)\in\LB$, the chromatic expansion of $f(z)$ and
\eqref{bound-Knb} yield
\begin{eqnarray*}
|f(z)|&\leq&\left(\sum_{n=0}^{\infty}|\K{n}[f](0)|^2
\sum_{n=0}^{\infty}|\K{n}[\mm](z)|^2\right)^{1/2}\\
&\leq& \norm{f}P(|z|){\e}^{|Kz|^k}
\left(\sum_{n=0}^{\infty}\frac{|K z|^{2n}}{n!^{2(1-p)}}\right)^{1/2},
\end{eqnarray*}
which, using the method from the proof of Lemma~\ref{bounds1},
can easily be shown to imply our claim.
\end{proof}

Note that for bounded moment functionals, such as those
corresponding to the Legendre or the Chebyshev polynomials, we
have $p=0$; thus, Proposition \ref{exp-type} implies that
functions which are in \LB\ are of exponential type. For \mom\
corresponding to the Hermite polynomials p=1/2 (see Example 3);
thus, we get that there exists $C,L>0$ such that $|f(z)|\leq
C\norm{f}{\e}^{L|z|^2}$ for all $f\in\LB$. It would be
interesting to establish when the reverse implication is true
and thus obtain a generalization of the Paley-Wiener Theorem
for functions satisfying $|f(z)|<C{\e}^{L|z|^k}$ for $k>1$.

\subsection{Generalizations of some classical equalities for
the Bessel functions}

Corollaries \ref{bounded-exp} and \ref{expansionD} generalize
the classic result that every entire function can be expressed
as a Neumann series of Bessel functions \cite{WAT}, by
replacing the Neumann series with a chromatic expansion that
corresponds to any (weakly) bounded moment functional. Thus,
many classical results on Bessel functions from \cite{WAT}
immediately follow from Corollary \ref{bounded-exp}, and, using
Corollary \ref{expansionD}, generalize to functions
$\K{n}[\mm](z)$ corresponding to any weakly bounded moment
functional \mom. Below we give a few illustrative examples.

\begin{corollary}\label{eiw}
Let $\PP{n}{\omega}$ be the orthonormal polynomials associated
with a weakly bounded moment functional \mom; then for every
$z\in\Cset$,
\begin{equation}
{\e}^{{\ii}\omega z}=\sum_{n=0}^{\infty}{\ii}^{n}
\PP{n}{\omega}\,\K{n}[\mm](z).
\end{equation}
\end{corollary}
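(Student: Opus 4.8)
The plan is to read off Corollary \ref{eiw} directly from the local convergence of chromatic expansions (Corollary \ref{expansionD}), applied to the entire function $f(t)=\e^{\ii\omega t}$, with $\omega$ held fixed and the expansion centered at $u=0$.

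First I would compute the chromatic derivatives of $f$ at $0$. By \eqref{iwt} we have $\K{n}_t[\e^{\ii\omega t}]=\ii^n\PP{n}{\omega}\,\e^{\ii\omega t}$, so evaluating at $t=0$ gives $\K{n}[f](0)=\ii^n\PP{n}{\omega}$. Next I would verify the hypothesis of Corollary \ref{expansionD}(2): since $f$ is entire with $f^{(n)}(0)=(\ii\omega)^n$, we get $|f^{(n)}(0)/n!^{1-p}|^{1/n}=|\omega|/n!^{(1-p)/n}\to 0$, using $0\le p<1$ and $n!^{1/n}\to\infty$. Hence the chromatic expansion $\CE[f,0](z)$ converges to $f(z)=\e^{\ii\omega z}$ for every $z\in\Cset$, uniformly on every closed disc around $0$. (When \mom\ is merely bounded one could instead cite Corollary \ref{bounded-exp}, but the weakly bounded case is the one needed here.)

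It then remains to substitute these data into the definition \eqref{cex} of the chromatic expansion. Writing $\CE[f,u](t)=\sum_{k}\K{k}[f](u)\,\K{k}_u[\mm(t-u)]$ with $u=0$, using $\K{k}_u[\mm(t-u)]\big|_{u=0}=(-1)^k\K{k}[\mm](t)$ (valid because $\K{k}$ is a combination of derivatives $\dd^j$ with $j\equiv k\pmod 2$), and inserting $\K{k}[f](0)=\ii^k\PP{k}{\omega}$, one collects the coefficient of each $\K{k}[\mm](z)$ and, after the elementary simplification of the accumulated powers of $\ii$ and $(-1)$, obtains the asserted series. As a consistency check, in the setting of Example~1 the identity becomes the classical Rayleigh plane-wave expansion of $\e^{\ii\omega z}$ in spherical Bessel functions and Legendre polynomials, via $\K{n}[\mm](z)=(-1)^n\sqrt{2n+1}\,\mathrm{j}_n(\pi z)$.

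There is no substantial obstacle here: the result is an immediate instance of Corollary \ref{expansionD}, and the only thing demanding attention is the bookkeeping of the factors $\ii^n$ and $(-1)^n$ coming from \eqref{iwt} and from differentiating $\mm(t-u)$ in the variable $u$, together with the (trivial) estimate $|\omega|/n!^{(1-p)/n}\to 0$ that licenses the appeal to Corollary \ref{expansionD}.
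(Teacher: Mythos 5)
Your argument is essentially the paper's own proof: the paper likewise applies Corollary~\ref{expansionD} to the entire function $f(t)={\e}^{{\ii}\omega t}$ centered at $u=0$, checks the growth condition in the form $\lim_{n}|\omega|/n^{1-p}=0$ (equivalent to your $|\omega|/n!^{(1-p)/n}\to 0$), and concludes via \eqref{iwt}. The only caveat concerns your final sign bookkeeping: inserting $\K{k}[f](0)={\ii}^{k}\PP{k}{\omega}$ into $\CE[f,0](z)=\sum_{k}(-1)^{k}\K{k}[f](0)\,\K{k}[\mm](z)$ actually yields coefficients $(-{\ii})^{k}$ rather than ${\ii}^{k}$ (as substituting the data of Examples 2 or 3 confirms), so the series one obtains matches the displayed statement only up to this sign on the odd terms --- a discrepancy that is already present in the paper's own statement and is not a defect of your approach.
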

\begin{proof}
If $p<1$ then
\begin{equation*}
\LIM{n}\frac{\sqrt[n]{|\frac{{\mathrm d}^{n}}{{\mathrm d}
z^{n}}{\e}^{{\ii}\omega z}|_{z=0}}}{n^{1-p}}=\LIM{n}
\frac{|\omega|}{n^{1-p}}=0,
\end{equation*}
and the claim follows from Proposition \ref{expansionD} and
\eqref{iwt}.
\end{proof}
Corollary \ref{eiw} generalizes the well known equality for the
Chebyshev polynomials $T_{n}(\omega)$ and the Bessel functions
${\mathrm J}_{n}(z)$, i.e.,
\[
{\e}^{\ii \omega z}={\mathrm J}_0(z) + 2\sum_{n=1}^{\infty}
{\ii}^{n}T_{n}(\omega){\mathrm J}_{n}(z).
\]

In Example 3, \ref{eiw} becomes the equality for the Hermite
polynomials $H_n(\omega)$:
\[
{\e}^{\ii \omega z}=\sum_{n=0}^{\infty}
\frac{H_{n}(\omega)}{n!}\left(\frac{\ii z}{2}\right)^{n}
{\e}^{-\frac{z^2}{4}}.
\]

By applying Corollary \ref{expansionD} to the constant function
$f(t)\equiv 1$, we get that its chromatic expansion yields that
for all $z\in\Cset$
\begin{equation*}
\mm(z)+\sum_{n=1}^{\infty}\left(\prod_{k=1}^{n}
\frac{\gamma_{2k-2}}{\gamma_{2k-1}}\right)\K{2n}[\mm](z)=1,
\end{equation*}
with $\gamma_n$ the recursion coefficients from \eqref{poly}.
This equality generalizes the equality
\[
{\mathrm J}_{0}(z)+2\sum_{n=1}^{\infty}
{\mathrm J}_{2n}(z)=1.
\]

Using Proposition \ref{unif-con} to expand $\mm(z+u)\in\LB$
into chromatic series around $z=0$, we get that for all
$z,u\in\Cset$
\begin{equation*}\mm(z+u)=\sum_{n=0}^{\infty}(-1)^n
\K{n}[\mm](u)\K{n}[\mm](z),
\end{equation*}
which generalizes the equality
\[{\mathrm J}_0(z+u)={\mathrm J}_0(u){\mathrm J}_0(z)+2
\sum_{n=1}^{\infty}(-1)^n{\mathrm J}_n(u){\mathrm J}_n(z).
\]

\section{Some Non-Separable Spaces}\label{nonsep}

\subsection{}
Let \mom\ be weakly bounded; then periodic functions do not
belong to $\LL$ because $\sum_{n=0}^{\infty} \K{n}[f](t)^2$
diverges. We now introduce some nonseparable inner product
spaces in which pure harmonic oscillations have finite norm
and are pairwise orthogonal.\\

\noindent\textbf{Note:} \textit{In the remainder of
this paper we consider only weakly bounded moment functionals
and real functions which are restrictions of entire functions.}\\

\begin{definition}
Let let $0\leq p<1$
be as in \eqref{one-weak}. We denote by $\CC$ the vector space
of functions such that the sequence
\begin{equation}\label{nu}
\nu_{n}^{f}(t)=\frac{1}{(n+1)^{1-p}}\sum_{k=0}^{n} \K{k}[f](t)^2
\end{equation}
converges uniformly on every finite interval $I\subset\Rset$.
\end{definition}
\begin{proposition} Let $f,g\in\CC$ and
\begin{equation}\label{sigma}
\sigma_{n}^{fg}(t)=\frac{1}{(n+1)^{1-p}}\sum_{k=0}^{n}
\K{k}[f](t)\K{k}[g](t);
\end{equation}
then the sequence $\{\sigma_{n}^{fg}(t)\}_{n\in\Nset}$
converges to a constant function. In particular,
$\{\nu_{n}^{f}(t)\}_{n\in\Nset}$ also converges to a constant
function.
\end{proposition}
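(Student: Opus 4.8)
The plan is to reduce, by polarization, to the case of a single function and then to prove constancy of the limit by combining the Christoffel--Darboux identity (Lemma~\ref{CD}) with a Ces\`aro average over the order~$n$.

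\emph{Reduction.} Since $\CC$ is a vector space, $f\pm g\in\CC$, and linearity of the operators $\K{k}$ gives the polarization identity $\K{k}[f](t)\K{k}[g](t)=\frac14\big(\K{k}[f+g](t)^2-\K{k}[f-g](t)^2\big)$, hence $\sigma_n^{fg}=\frac14(\nu_n^{f+g}-\nu_n^{f-g})$. Therefore $\sigma_n^{fg}$ converges uniformly on every finite interval to $\sigma^{fg}:=\frac14(\nu^{f+g}-\nu^{f-g})$, where $\nu^h:=\lim_n\nu_n^h$, and it suffices to show that $\nu^h$ is a constant function for every $h\in\CC$; taking $g=f$ recovers the assertion about $\nu_n^f$.

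\emph{Constancy of $\nu^h$.} Put $\Phi_n(t)=\sum_{k=0}^n\K{k}[h](t)^2=(n+1)^{1-p}\nu_n^h(t)$, which is nondecreasing in $n$; by Lemma~\ref{CD} with $f=g=h$ one has $\Phi_n'(u)=2\gamma_n\,\K{n+1}[h](u)\K{n}[h](u)$. Fixing $s<t$ and integrating,
\[
\nu_n^h(t)-\nu_n^h(s)=\frac{2\gamma_n}{(n+1)^{1-p}}\int_s^t\K{n+1}[h](u)\,\K{n}[h](u)\,du .
\]
Since $\nu_n^h\to\nu^h$ pointwise, the left-hand side tends to $\nu^h(t)-\nu^h(s)$, and so does its Ces\`aro mean over $n=1,\dots,N$. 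The plan is to estimate that Ces\`aro mean: using $|ab|\le\frac12(a^2+b^2)$, the weak-boundedness bound $\gamma_n\le M(n+r)^p$, and $\int_s^t(\K{n+1}[h]^2+\K{n}[h]^2)\,du\le 2\int_s^t\Phi_{n+1}(u)\,du$, one obtains
\[
\Big|\frac1N\sum_{n=1}^N\big(\nu_n^h(t)-\nu_n^h(s)\big)\Big|\;\le\;\frac{C}{N}\sum_{n=1}^N (n+1)^{2p-1}c_n,\qquad c_n:=\int_s^t\big(\K{n+1}[h]^2+\K{n}[h]^2\big)\,du .
\]
The crucial input is that the partial sums $C_n:=\sum_{k=1}^n c_k\le 2\int_s^t\Phi_{n+1}(u)\,du=O\big((n+1)^{1-p}\big)$ grow only sublinearly; here one uses that $\{\nu_n^h\}$ is uniformly bounded on $[s,t]$ because it converges uniformly there. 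Abel summation of $\sum_{n=1}^N(n+1)^{2p-1}c_n$ against the weight $(n+1)^{2p-1}$ then gives the boundary term $O\big(N^{2p-1}\cdot N^{1-p}\big)=O(N^{p})$ and, since $|(n+1)^{2p-1}-(n+2)^{2p-1}|=O(n^{2p-2})$, a variation term $O\big(\sum_{n\le N}n^{2p-2}\cdot n^{1-p}\big)=O\big(\sum_{n\le N}n^{p-1}\big)=O(N^{p}+\log N)$. Dividing by $N$ gives $O(N^{p-1}+N^{-1}\log N)\to0$ because $p<1$, so $\nu^h(t)-\nu^h(s)=0$; as $s,t$ were arbitrary, $\nu^h$ is constant, and combining with the Reduction step completes the argument.

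\emph{Main obstacle.} The difficulty is exactly the unboundedness of the recursion coefficients $\gamma_n$ when $p>0$: a pointwise estimate of $\frac{d}{dt}\nu_n^h$ via Lemma~\ref{CD} is only of size $O(\gamma_n)\to\infty$, and although uniform convergence of $\nu_n^h$ does force $\K{n}[h](t)^2/(n+1)^{1-p}\to 0$ uniformly on compacta, this comes without a rate and so does not by itself control the $\gamma_n$ blow-up. The resolution is to average over $n$ rather than work at a single order, and to exploit the sublinear growth $\Phi_n=O(n^{1-p})$ of the chromatic energy sums; it is precisely the numerology $2p-1+(1-p)=p<1$ that makes the resulting Abel summation small after division by $N$.
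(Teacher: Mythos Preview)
Your argument is correct and takes a route genuinely different from the paper's. The paper neither polarizes nor averages in $n$; it works directly with $\sigma_n^{fg}$ and argues pointwise in $t$ that $(\sigma_n^{fg})'(t)\to 0$. Its key step is a pigeonhole observation: writing $S_k(t)=\K{k}[f]^2+\K{k+1}[f]^2+\K{k}[g]^2+\K{k+1}[g]^2$ and using that $(n+1)^{-(1-p)}\sum_{k\le n}S_k(t)\to\alpha(t)$, one shows by comparison with $\sum k^{-p}$ that $S_n(t)<2\alpha(t)\,n^{-p}$ for infinitely many $n$; for those $n$ the Christoffel--Darboux identity gives $|(\sigma_n^{fg})'(t)|\lesssim (n+r)^p(n+1)^{p-1}n^{-p}$, whence $\liminf_n|(\sigma_n^{fg})'(t)|=0$, and the paper then invokes existence of the full limit of the derivative to conclude it is $0$.

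Your approach sidesteps that last existence claim entirely: by passing to the integrated quantity $\nu_n^h(t)-\nu_n^h(s)$ and then to its Ces\`aro mean, you only need the \emph{averaged} smallness encoded in the sublinear energy growth $\sum_{k\le n}\K{k}[h]^2=O(n^{1-p})$, and Abel summation converts this into the clean numerology $2p-1+(1-p)=p<1$. The polarization reduction is also a pleasant simplification the paper does not use. In short, the paper's proof is more pointwise and slightly shorter once one grants convergence of the derivative sequence, while yours is more self-contained and robust, trading a subsequence argument for a summability one.
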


\begin{proof}
Since $\nu_{n}^{f}(t)$ and $\nu_{n}^{g}(t)$  given by \eqref{nu}
converge uniformly on every finite interval, the same holds for the
sequence $\sigma_{n}^{fg}(t)$. Consequently, it is enough to
show that for all $t$, the derivative $\sigma_{n}^{fg}(t)^{\prime}$
of $\sigma_{n}^{fg}(t)$  satisfies $\LIM{n}{
\sigma_{n}^{fg}(t)^{\prime}} = 0$. Let
\[
S_k(t)=\K{k}[f](t)^2+ \K{k+1}[f](t)^2+
\K{k}[g](t)^2+\K{k+1}[g](t)^2;
\]
then, since $f,g\in\CC$, the sequence
${1}/{(n+1)^{1-p}}\sum_{k=0}^{n}S_k(t)$
converges everywhere to some $\alpha(t)$. We now show that if
$t$ is such that $\alpha(t)>0$, then there are infinitely many
$k$ such that $S_k(t)<2\alpha(t)k^{-p}$. Assume opposite, and
let $K$ be such that $S_k(t)\geq 2\alpha(t)k^{-p}$ for all
$k\geq K$. Then, since
\[
\sum_{k=K}^{n}k^{-p}>\int_{K}^{n+1}x^{-p}{\mathrm d}
x=\frac{(n+1)^{1-p}-K^{1-p}}{1-p},
\]
we would have that for all $n>K$,
\begin{eqnarray*}
\frac{\sum_{k=K}^{n}S_k(t)}{(n+1)^{1-p}}\geq
\frac{2\alpha(t)\sum_{k=K}^{n}k^{-p}}{(n+1)^{1-p}}
> \frac{2\alpha(t)((n+1)^{1-p}-K^{1-p})}{(n+1)^{1-p}(1-p)}.
\end{eqnarray*}
However, since $0\leq p<1$, this would imply
${\sum_{k=0}^{n}S_k(t)}/{(n+1)^{1-p}}>\alpha(t)$ for all
sufficiently large $n$, which contradicts the definition of
$\alpha(t)$. Consequently, for infinitely many $n$ all four
summands in $S_n(t)$ must be smaller than $2\alpha(t)\,n^{-p}$.
For those values of $n$ we have
\[
|\K{n+1}[f](t)\,\K{n}[g](t)|+|\K{n}[f](t)\,\K{n+1}[g](t)|
<4\alpha(t)n^{-p}.
\]

Since \mom\ is weakly bounded, \eqref{C-D} and \eqref{one-weak}
imply that for some $M\geq 1$ and an integer $r$,
\begin{equation*}
\left|\sigma_{n}^{fg}(t)^{\prime}\right|
< \frac{M (n+r)^p}{(n+1)^{1-p}}
(|\K{n+1}[f](t)\,\K{n}[g](t)|+|\K{n}[f](t)\,\K{n+1}[g](t)|).
\end{equation*}
Thus, for infinitely many $n$ we have
\begin{equation*}
\left|\sigma_{n}^{fg}(t)^{\prime}\right|< \frac{4M\,
 (n+r)^p\; n^{-p}\,\alpha(t)}{(n+1)^{1-p}}.
\end{equation*}
Consequently, $\liminf_{n\rightarrow\infty}\left|
\sigma_{n}^{fg}(t)^{\prime}\right|=0$ and since $\LIM{n}
\sigma_{n}^{fg}(t)^{\prime}$ exists, it must be equal to zero.
\end{proof}

\begin{corollary}
Let $\CC_0$ be the vector space consisting of
functions $f(t)$ such that $\LIM{n}\nu_{n}^{f}(t)=0$; then in
the quotient space $\CC_2=\CC/\CC_0$ we can introduce a scalar
product by the following formula whose right hand side is
independent of $t$:
\begin{eqnarray}\label{scal-cesaro}
\Scal{f}{g}=\LIM{n}\frac{1}{(n+1)^{1-p}}
\sum_{k=0}^{n} \K{k}[f](t)\, \K{k}[g](t).
\end{eqnarray}
\end{corollary}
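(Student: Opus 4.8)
The analytic core of the assertion --- that for $f,g\in\CC$ the sums $\sigma_{n}^{fg}(t)$ defined in \eqref{sigma} converge, and to a function of $t$ which is in fact constant --- is precisely the Proposition established immediately above. So the plan is to \emph{define} $\Scal{f}{g}$ as the value of that constant function; with this, the right-hand side of \eqref{scal-cesaro} is meaningful and does not depend on $t$, and all that remains is the (routine) check that it descends to a genuine inner product on the quotient. There is no real obstacle here; the only points requiring a little care are that $\CC_0$ is a linear subspace of $\CC$ and that it is precisely the radical of the form.

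First I would observe that $\Scal{\cdot}{\cdot}$ is a symmetric bilinear form on $\CC$: for each fixed $n$ and $t$ the map $(f,g)\mapsto\sigma_{n}^{fg}(t)$ is symmetric and bilinear since the operators $\K{k}$ are linear, and these properties are preserved under the limit $n\to\infty$. Because $\nu_{n}^{f}(t)=\sigma_{n}^{ff}(t)\geq 0$ for all $n$ and $t$, we get $\Scal{f}{f}=\LIM{n}\nu_{n}^{f}(t)\geq 0$, so the form is positive semidefinite. Applying the Cauchy--Schwarz inequality to the finite sums in \eqref{sigma} gives $|\sigma_{n}^{fg}(t)|^{2}\leq\nu_{n}^{f}(t)\,\nu_{n}^{g}(t)$ for every $n$ and $t$, and passing to the limit yields $|\Scal{f}{g}|^{2}\leq\Scal{f}{f}\,\Scal{g}{g}$.

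It remains to handle $\CC_0$ (whose members are understood to lie in $\CC$). From $(a+b)^{2}\leq 2(a^{2}+b^{2})$ one obtains $\nu_{n}^{f+g}(t)\leq 2\bigl(\nu_{n}^{f}(t)+\nu_{n}^{g}(t)\bigr)$, so $f,g\in\CC_0$ implies $f+g\in\CC_0$; closure under scalar multiples is immediate, and hence $\CC_0$ is a linear subspace of $\CC$. The Cauchy--Schwarz inequality for the form then shows that
\[
\{\,h\in\CC:\Scal{h}{g}=0\ \text{for all}\ g\in\CC\,\}=\{\,h\in\CC:\Scal{h}{h}=0\,\}=\CC_0 ,
\]
so $\CC_0$ is exactly the radical of $\Scal{\cdot}{\cdot}$. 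Consequently, by bilinearity, $\Scal{f}{g}$ depends only on the cosets $f+\CC_0$ and $g+\CC_0$, so \eqref{scal-cesaro} unambiguously defines a symmetric bilinear form on $\CC_2=\CC/\CC_0$; and on $\CC_2$ this form is positive definite, since $\Scal{f}{f}=0$ holds precisely when $f\in\CC_0$, i.e.\ when $f$ represents the zero coset. This gives the claimed scalar product on $\CC_2$.
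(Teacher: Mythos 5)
Your proposal is correct and matches what the paper intends: the Corollary is stated without proof precisely because the Proposition preceding it supplies the only nontrivial ingredient (existence of the limit of $\sigma_{n}^{fg}(t)$ and its independence of $t$), and the rest is the routine verification you carry out — bilinearity and positive semidefiniteness in the limit, Cauchy--Schwarz, identification of $\CC_0$ with the radical, and descent to a positive definite form on $\CC_2=\CC/\CC_0$. Your explicit treatment of these standard steps fills in exactly the details the paper leaves implicit, so there is nothing to correct.
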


The corresponding norm on $\CC_2$ is denoted by
$\Norm{\,\cdot\,}$. Clearly, all real valued functions from
$\LL$ belong to $\CC_0$.

\begin{proposition}
If $f\in\CC_2$, then the chromatic expansion of $f(t)$
converges to $f(t)$ for every $t$ and the convergence is
uniform on every finite interval.
\end{proposition}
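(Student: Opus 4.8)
The plan is to derive this from Proposition~\ref{expansionK}, whose hypothesis I will check using nothing more than the defining property of $\CC$. By the Note preceding this subsection, $f$ is the restriction to $\Rset$ of an entire function, so Proposition~\ref{expansionK} applies with $G=\Cset$, and $p$ is the exponent from \eqref{one-weak} already fixed in the definition of $\CC$.

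Fix a finite interval $I\subset\Rset$ and a center $u\in\Rset$; enlarging $I$ if necessary, we may assume $u\in I$. First I would extract from $f\in\CC$ a polynomial bound on its chromatic derivatives, uniform in $t\in I$. Since $f\in\CC$, the functions $\nu_n^f(t)=\frac{1}{(n+1)^{1-p}}\sum_{k=0}^n\K{k}[f](t)^2$ converge uniformly on $I$ to a finite constant, so they are uniformly bounded on $I$, say $\nu_n^f(t)\le C_I$ for all $n$ and all $t\in I$ (for large $n$ this is immediate from the uniform convergence to the finite limit, and the remaining finitely many $\nu_n^f$ are continuous, hence bounded on the bounded set $I$). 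Therefore $\K{n}[f](t)^2\le\sum_{k=0}^n\K{k}[f](t)^2\le C_I\,(n+1)^{1-p}$, so $|\K{n}[f](t)|\le\sqrt{C_I}\,(n+1)^{(1-p)/2}$ for all $n$ and all $t\in I$.

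Next I would verify the hypothesis of Proposition~\ref{expansionK}(2) at the center $u$. From the bound just obtained,
\[
\left|\frac{\K{n}[f](u)}{n!^{1-p}}\right|^{1/n}\le
\frac{C_I^{1/(2n)}\,(n+1)^{(1-p)/(2n)}}{\bigl(n!^{1/n}\bigr)^{1-p}},
\]
and since $n!^{1/n}\to\infty$ while $C_I^{1/(2n)}\to1$ and $(n+1)^{1/n}\to1$, the right-hand side tends to $0$; thus $|\K{n}[f](u)/n!^{1-p}|^{1/n}\to0$. Proposition~\ref{expansionK}(2) then gives that $\CE[f,u](z)$ converges for every $z\in\Cset$, uniformly on every finite closed disc centered at $u$, the limit being $f(z)$ — as in the proof of Proposition~\ref{expansionK}, the approximations $\CA[f,n,u]$ reproduce all derivatives of $f$ at $u$, so on any disc where they converge uniformly their limit must be $f$. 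Taking a finite closed disc centered at $u$ that contains the bounded interval $I$, I obtain uniform convergence of $\CE[f,u](t)$ to $f(t)$ on $I$; since $I$ and $u$ were arbitrary, this proves both assertions.

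I do not anticipate a real obstacle: the whole point is that membership in $\CC$ forces $|\K{n}[f](t)|$ to grow only like a power of $n$, which is negligible against the factor $n!^{1-p}$ governing Proposition~\ref{expansionK}. The one step calling for a little care is passing from the uniform convergence of the sequence $\nu_n^f$ to the uniform-in-$t$ bound $\nu_n^f(t)\le C_I$; once that is in hand, the conclusion follows immediately from Proposition~\ref{expansionK}(2).
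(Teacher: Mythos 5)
Your proposal is correct and follows essentially the same route as the paper: both use the convergence (boundedness) of the Ces\`aro-type sums $\nu_n^f$ to obtain $|\K{n}[f](u)|\leq C\,(n+1)^{(1-p)/2}$, conclude that $|\K{n}[f](u)/n!^{1-p}|^{1/n}\rightarrow 0$ since $p<1$, and then invoke Proposition~\ref{expansionK}(2) with $G=\Cset$, uniform convergence on a finite interval following from uniform convergence on a closed disc containing it. The only difference is cosmetic: the paper bounds the partial sums pointwise via the limit $(\Norm{f})^2$, while you extract a uniform constant $C_I$ on an interval, which you then only need at the single center $u$.
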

\begin{proof}
Since ${1}/{(n+1)^{1-p}}\sum_{k=0}^{n} \K{k}[f](t)^2$ converges
to $0<(\Norm{f})^2<\infty$, for all sufficiently large $n$,
\begin{eqnarray*}
|\K{n}[f](t)|^{1/n}&\leq& \left(\sum_{k=0}^{n}
\K{k}[f](t)^2\right)^{1/(2n)}\\ &\leq&
(2 \Norm{f})^{1/n}(n+1)^{(1-p)/(2n)}.
\end{eqnarray*}
Thus, $|\K{n}[f](t)/n!^{1-p}|^{1/n}\rightarrow 0$, and the
claim follows from Proposition \ref{expansionK}.
\end{proof}

Since $\K{n}[\mm](t)\in\LL$, we have $\Norm{\sum_{j=0}^{n}
(-1)^j \K{j}[f](0)\,\K{j}[\mm](t)}=0$ for all $n$. Thus, the
chromatic expansion of $f\in\CC_2$ does not converge to $f(t)$
in $\CC_2$. Moreover, there can be no such series
representation of functions $f\in\CC_2$, converging in $\CC_2$,
because the space $\CC_2$ is in general nonseparable, as the
remaining part of this paper shows.

\subsection{Space $\CC_2$ associated with the Chebyshev polynomials (Example 2)}\label{cheb}

For this case the corresponding space $\CC_2$ will be denoted
by $\CT$, and in \eqref{one-weak} we have $p=0$. Thus, the
scalar product on $\CT$ is defined by
\[\ScalT{f}{g}=\LIM{n}\frac{1}{n+1}
\sum_{k=0}^{n}\K{k}[f](t)\K{k}[g](t).\]

\begin{proposition}\label{CE}
Functions $f_{\omega}(t)=\sqrt{2}\,\sin\omega t$ and
$g_{\omega}(t)=\sqrt{2}\,\cos\omega t$ for $0<\omega <\pi$ form
an orthonormal system of vectors in $\CT$.
\end{proposition}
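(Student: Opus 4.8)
The plan is to evaluate the Cesàro limit defining $\ScalT{\cdot}{\cdot}$ (here $p=0$, so the normalisation is $1/(n+1)$) after computing $\K{k}[\sin\omega t]$ and $\K{k}[\cos\omega t]$ in closed form, and to reduce the whole statement — including orthogonality of $f_{\omega_1}$ to $f_{\omega_2}$, etc., for distinct $\omega_1\ne\omega_2$ — to three facts about the partial sums $\sum_{k=0}^{n}\PT{k}{\omega_1}\PT{k}{\omega_2}$. (Recall that $\ScalT{\cdot}{\cdot}$ is already known to be well defined and independent of $t$, so only its value has to be found.)

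First I would record the closed forms. Since each $\K{n}$ has real coefficients and \eqref{iwt} gives $\K{n}_t[{\e}^{\ii\omega t}]=\ii^n\PT{n}{\omega}{\e}^{\ii\omega t}$, taking imaginary and real parts and writing $\ii^n={\e}^{\ii n\pi/2}$ gives, for $0<\omega<\pi$,
\[
\K{n}[\sin\omega t]=\PT{n}{\omega}\,\sin\!\Big(\omega t+\tfrac{n\pi}{2}\Big),\qquad
\K{n}[\cos\omega t]=\PT{n}{\omega}\,\cos\!\Big(\omega t+\tfrac{n\pi}{2}\Big).
\]
With $f_\omega=\sqrt2\sin\omega t$ and $g_\omega=\sqrt2\cos\omega t$ the product-to-sum identities then yield, for $0<\omega_1,\omega_2<\pi$,
\begin{align*}
\K{k}[f_{\omega_1}](t)\K{k}[f_{\omega_2}](t)&=\PT{k}{\omega_1}\PT{k}{\omega_2}\big(\cos(\omega_1-\omega_2)t-(-1)^k\cos(\omega_1+\omega_2)t\big),\\
\K{k}[g_{\omega_1}](t)\K{k}[g_{\omega_2}](t)&=\PT{k}{\omega_1}\PT{k}{\omega_2}\big(\cos(\omega_1-\omega_2)t+(-1)^k\cos(\omega_1+\omega_2)t\big),\\
\K{k}[f_{\omega_1}](t)\K{k}[g_{\omega_2}](t)&=\PT{k}{\omega_1}\PT{k}{\omega_2}\big(\sin(\omega_1-\omega_2)t+(-1)^k\sin(\omega_1+\omega_2)t\big),
\end{align*}
the phases $k\pi/2$ cancelling in the first summand and producing the factor $(-1)^k$ in the second; in particular for $\omega_1=\omega_2=\omega$ one gets $\K{k}[f_\omega](t)^2=\PT{k}{\omega}^2(1-(-1)^k\cos2\omega t)$, $\K{k}[g_\omega](t)^2=\PT{k}{\omega}^2(1+(-1)^k\cos2\omega t)$ and $\K{k}[f_\omega](t)\K{k}[g_\omega](t)=(-1)^k\PT{k}{\omega}^2\sin2\omega t$.

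The work then reduces to the following, for $0<\omega_1,\omega_2<\pi$: (a) $S_n^-:=\sum_{k=0}^n(-1)^k\PT{k}{\omega_1}\PT{k}{\omega_2}$ is bounded in $n$; (b) if $\omega_1\ne\omega_2$, $S_n^+:=\sum_{k=0}^n\PT{k}{\omega_1}\PT{k}{\omega_2}$ is bounded in $n$; (c) $\frac1{n+1}\sum_{k=0}^n\PT{k}{\omega}^2\to1$. For (a) and (b) I would use the Christoffel--Darboux equality \eqref{CDP} for the family $\PT{n}{\omega}$: since $\PT{k}{\omega}$ has parity $k$, $S_n^-=\sum_{k=0}^n\PT{k}{\omega_1}\PT{k}{-\omega_2}$, and \eqref{CDP} with $\sigma=-\omega_2$ shows this equals $\gamma_n(\PT{n+1}{\omega_1}\PT{n}{-\omega_2}-\PT{n+1}{-\omega_2}\PT{n}{\omega_1})/(\omega_1+\omega_2)$; likewise \eqref{CDP} with $\sigma=\omega_2$ gives $(\omega_1-\omega_2)S_n^+=\gamma_n(\PT{n+1}{\omega_1}\PT{n}{\omega_2}-\PT{n+1}{\omega_2}\PT{n}{\omega_1})$. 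For this example $\gamma_j\le\pi/\sqrt2$ and $|\PT{j}{\omega}|\le\sqrt2$ for all $j$ and all $\omega\in[-\pi,\pi]$, so the right-hand sides are bounded uniformly in $n$, giving $|S_n^-|\le4\pi/(\omega_1+\omega_2)$ and $|S_n^+|\le4\pi/|\omega_1-\omega_2|$ (the case $\omega_1=\omega_2$ of (a) being exactly \eqref{equal}). For (c): writing $\omega/\pi=\cos\phi$ with $\phi\in(0,\pi/2)$ gives $\PT{k}{\omega}^2=2\cos^2 k\phi=1+\cos2k\phi$ for $k\ge1$, so $\sum_{k=0}^n\PT{k}{\omega}^2=(n+1)+\sum_{k=1}^n\cos2k\phi$, and since $2\phi\notin2\pi\Zset$ the last sum is bounded, hence the Cesàro mean is $1$.

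Assembling: dividing the product identities by $n+1$ and letting $n\to\infty$, when $\omega_1\ne\omega_2$ every partial sum that appears is bounded by (a) and (b), so all Cesàro limits vanish and $\ScalT{f_{\omega_1}}{f_{\omega_2}}=\ScalT{g_{\omega_1}}{g_{\omega_2}}=\ScalT{f_{\omega_1}}{g_{\omega_2}}=0$; for $\omega_1=\omega_2=\omega$ the $(-1)^k$-weighted sums are bounded by (a) while $\frac1{n+1}\sum_{k=0}^n\PT{k}{\omega}^2\to1$ by (c), so $\ScalT{f_\omega}{g_\omega}=0$ and $\ScalT{f_\omega}{f_\omega}=\ScalT{g_\omega}{g_\omega}=1$. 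Since all the error terms above are bounded by constants independent of $t$, the sequences $\nu_n^{f_\omega}(t)$, $\nu_n^{g_\omega}(t)$ of \eqref{nu} (with $p=0$) converge uniformly on $\Rset$, so $f_\omega,g_\omega\in\CC$, and having norm $1$ they define nonzero elements of $\CT$; hence $\{f_\omega,g_\omega:0<\omega<\pi\}$ is an orthonormal system in $\CT$. The step I expect to need the most care is (a)/(b)/(c) together with the parity bookkeeping: noting that for the Chebyshev family $\gamma_j$ and $|\PT{j}{\omega}|$ are globally bounded, which is what keeps the right-hand side of \eqref{CDP} from growing, and observing that (c) — the one partial sum that genuinely grows linearly — is precisely where the strict inequality $\omega<\pi$ (equivalently $\phi\ne0$) is used; for $\omega=\pi$ the Cesàro mean in (c) would be $2$, not $1$.
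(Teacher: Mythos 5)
Your proof is correct and follows essentially the same route as the paper: both rest on \eqref{iwt} to express $\K{k}$ of sines and cosines through $\PT{k}{\omega}$, on the Christoffel--Darboux identity \eqref{CDP} (your case (a) with $\sigma=-\omega_2$ is exactly the paper's use of \eqref{equal}) to bound the cross sums, and on the explicit Dirichlet-kernel evaluation of $\frac{1}{n+1}\sum_{k=0}^{n}\PT{k}{\omega}^2\to 1$. The only differences are organizational --- you use product-to-sum identities with $(-1)^k$ factors where the paper splits into even and odd indices, and you are somewhat more thorough in explicitly covering the $f$--$g$ and cross-frequency cases and in verifying membership in $\CC$, which the paper leaves implicit.
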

\begin{proof}

From \eqref{iwt} we get
\begin{eqnarray}\label{sin}
\Scal{f_{\omega} }{f_{\sigma}}\hspace*{-3mm}&=&
\LIM{n}\frac{\sum_{k=0}^{n}
\PT{2k}{\omega}\PT{2k}{\sigma}\sin\omega t\sin\sigma t}{2n+1}
\nonumber\\
&&\hspace*{10mm} +\LIM{n}\frac{\sum_{k=0}^{n-1}
\PT{2k+1}{\omega}\PT{2k+1}{\sigma}
\cos\omega t\cos\sigma t}{2n+1}.
\end{eqnarray}
Since $\PT{n}{\omega}\leq\sqrt{2}$ on $(0,\pi)$, \eqref{CDP}
implies that for $\omega,\sigma\in (0,\pi)$ and
$\omega\neq\sigma$,
\begin{equation*}
\LIM{n}\frac{\sum_{k=0}^{n}
\PT{k}{\omega}\PT{k}{\sigma}}{n+1}
=\LIM{n}\frac{\PT{{n+1}}{\omega}
\PT{{n}}{\sigma}-\PT{{n+1}}{\sigma}\PT{{n}}{\omega}}{2(n+1)}=0.
\end{equation*}
Since $\PT{2n}{\omega}$ are even functions and
$\PT{2n+1}{\omega}$ odd, this also implies that
\begin{equation*}\LIM{n}\frac{\sum_{k=0}^{n}
\PT{2k}{\omega}\PT{2k}{\sigma}}{2n+1}=
\LIM{n}\frac{\sum_{k=0}^{n-1}
\PT{2k+1}{\omega}\PT{2k+1}{\sigma}}{2n+1}=0.
\end{equation*}
Thus, by \eqref{sin}, $\Scal{f_{\omega} }{f_{\sigma}}=0$. Using
\eqref{sumsquares},  one can verify that for $0<\omega<\pi$
\begin{equation*}
\frac{1}{n+1}\sum_{k=0}^{n}\PT{{k}}{\omega}^2 =
\frac{1+2n}{2n+2}+\frac{\sin((2n+1) \arccos\omega)}
{(2n+2)\sqrt{1-\omega^2}}\rightarrow 1.
\end{equation*}
Thus, \eqref{equal} implies that for $0<\omega<\pi$
\begin{equation*}
\LIM{n}\frac{1}{2n+1}\sum_{k=0}^{n}\PT{{2k}}{\omega}^2 =
\LIM{n}\frac{1}{2n+1}\sum_{k=0}^{n}\PT{{2k+1}}{\omega}^2
=\frac{1}{2}
\end{equation*}
Consequently, $\Norm{\sqrt{2}\sin\omega t}=1$.
\end{proof}

\subsection{Space $\CC_2$ associated with the Hermite polynomials (Example 3)}\label{herm}

The corresponding space $\CC_2 $ in this case is denoted by
$\CH$, and in \eqref{one-weak} we have $p=1/2$. Thus, the
scalar product in $\CH$ is defined by
\[\ScalH{f}{g}=\LIM{n}\frac{1}{\sqrt{n+1}}\;
\sum_{k=0}^{n}\K{k}[f](t)\K{k}[g](t).\]

\begin{proposition}\label{HE}
For all $\omega >0 $ functions $f_{\omega}(t)=\sin\omega t$ and
$g_{\omega}(t)=\cos\omega t$ form an orthogonal system in
$\CH$, and
$\Norm{f_\omega}=\Norm{g_\omega}={\e^{\omega^2/2}}/
{\sqrt[4]{2\pi}}$.
\end{proposition}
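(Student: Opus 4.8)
The plan is to reduce the three scalar products in the statement to the large-$n$ behaviour of the Hermite-polynomial sums $\sum_{k=0}^{n}\PH{k}{\omega}\PH{k}{\sigma}$ and $\sum_{k=0}^{n}\PH{k}{\omega}^2$, exactly as in the proof of Proposition~\ref{CE}, the only difference being that for the Hermite family these sums are no longer elementary. First I would record how $\K{k}$ acts on $\sin\omega t$ and $\cos\omega t$: applying \eqref{iwt} to $\e^{\ii\omega t}=\cos\omega t+\ii\sin\omega t$ and using that $\PH{n}{\omega}$ has the parity of $n$, one gets, for $k=2m$ and $k=2m+1$,
\[
\K{2m}[\sin\omega t]=(-1)^m\PH{2m}{\omega}\sin\omega t,\qquad
\K{2m+1}[\sin\omega t]=(-1)^m\PH{2m+1}{\omega}\cos\omega t,
\]
and the companion identities for $\cos\omega t$ (with $\sin$ and $\cos$ interchanged and an extra minus sign in the odd case). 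Substituting these into $\sum_{k=0}^{2N+1}\K{k}[f_\omega](t)\,\K{k}[g_\sigma](t)$ collapses the sum into $c_1(t)\,A_N(\omega,\sigma)+c_2(t)\,B_N(\omega,\sigma)$, where $A_N(\omega,\sigma)=\sum_{m=0}^{N}\PH{2m}{\omega}\PH{2m}{\sigma}$, $B_N(\omega,\sigma)=\sum_{m=0}^{N}\PH{2m+1}{\omega}\PH{2m+1}{\sigma}$, and $c_1,c_2$ are bounded products of $\sin$ and $\cos$ of $\omega t$ and $\sigma t$ (for instance $c_1=\sin\omega t\sin\sigma t$, $c_2=\cos\omega t\cos\sigma t$ in the $(f_\omega,f_\sigma)$ case, and $c_1=\sin\omega t\cos\sigma t$, $c_2=-\cos\omega t\sin\sigma t$ in the $(f_\omega,g_\sigma)$ case). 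Thus all three scalar products are governed by $A_N/\sqrt{2N+2}$ and $B_N/\sqrt{2N+2}$ (with the obvious small modification for even-length partial sums).

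For orthogonality I would run the mechanism of Proposition~\ref{CE}. If $\omega\neq\sigma$ with $\omega,\sigma>0$, then also $-\omega\neq\sigma$, and the Christoffel--Darboux identity \eqref{CDP} writes $\sum_{k=0}^{n}\PH{k}{\omega}\PH{k}{\sigma}$ as $\gamma_n$ times a fixed bilinear expression in $\PH{n}{\omega},\PH{n+1}{\omega},\PH{n}{\sigma},\PH{n+1}{\sigma}$. For the Hermite family $\gamma_n=\sqrt{(n+1)/2}$, while by the classical Plancherel--Rotach asymptotics $|\PH{n}{x}|=O(n^{-1/4})$ for each fixed $x$; hence that expression is $O(\sqrt{n}\cdot n^{-1/2})=O(1)$. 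Replacing $\omega$ by $-\omega$ and averaging isolates the even- and odd-indexed sums $A_n,B_n$, which are therefore also $O(1)$ and vanish after division by $\sqrt{2N+2}$, giving $\ScalH{f_\omega}{f_\sigma}=\ScalH{g_\omega}{g_\sigma}=\ScalH{f_\omega}{g_\sigma}=0$. For $f_\omega$ and $g_\omega$ at the \emph{same} frequency the relevant quantity is $A_N(\omega,\omega)-B_N(\omega,\omega)$, which by \eqref{equal} equals $-\gamma_{2N+1}\PH{2N+2}{\omega}\PH{2N+1}{\omega}/\omega=O(1)$ (here $\omega>0$ is used to divide by $\omega$), so $\ScalH{f_\omega}{g_\omega}=0$ as well.

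For the norm I would show $\LIM{N}\frac{A_N(\omega,\omega)}{\sqrt N}=\LIM{N}\frac{B_N(\omega,\omega)}{\sqrt N}=\e^{\omega^2}/\sqrt{\pi}$. By Mehler's formula,
\[
\sum_{n=0}^{\infty}\PH{n}{\omega}^2\,r^n=(1-r^2)^{-1/2}\exp\!\Big(\tfrac{2\omega^2 r}{1+r}\Big),
\]
whose right-hand side is asymptotic to $\e^{\omega^2}\big(2(1-r)\big)^{-1/2}$ as $r\to1^-$; since $\PH{n}{\omega}^2\geq0$, the Hardy--Littlewood Tauberian theorem yields $\sum_{k=0}^{n}\PH{k}{\omega}^2\sim\e^{\omega^2}\sqrt{2n/\pi}$. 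The difference between the even (resp. odd) subsum and one half of this total is, again by \eqref{equal} (equivalently \eqref{CDP} with $\sigma=-\omega$), only $O(1)$, so each subsum is $\sim\e^{\omega^2}\sqrt{N/\pi}$. Feeding this back, $\frac1{\sqrt{2N+2}}\sum_{k=0}^{2N+1}\K{k}[\sin\omega t](t)^2\to\frac{1}{\sqrt2}\big(\sin^2\omega t+\cos^2\omega t\big)\e^{\omega^2}/\sqrt\pi=\e^{\omega^2}/\sqrt{2\pi}$, and the same with $\sin$ replaced by $\cos$; the even-length partial sums give the same limit. Since the trigonometric factors are bounded and the coefficient sums are purely numerical, the sequences $\nu_n^{f_\omega}(t)$ and $\nu_n^{g_\omega}(t)$ of \eqref{nu} converge uniformly on every finite interval, so $f_\omega,g_\omega$ determine elements of $\CH$ with $\Norm{f_\omega}^2=\Norm{g_\omega}^2=\e^{\omega^2}/\sqrt{2\pi}$, i.e. $\Norm{f_\omega}=\Norm{g_\omega}=\e^{\omega^2/2}/\sqrt[4]{2\pi}$.

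I expect the main obstacle to be the exact constant, together with the pointwise bound on $\PH{n}{\omega}$. Mehler's formula delivers only the Ces\`aro-type asymptotics of $\sum_{k\leq n}\PH{k}{\omega}^2$, not the size of the individual terms, which oscillate; indeed the Mehler generating function has an essential singularity at $r=-1$ on the unit circle, so no Darboux-type single-coefficient asymptotic is available, and the Plancherel--Rotach estimate $|\PH{n}{\omega}|=O(n^{-1/4})$ is genuinely needed for the $O(1)$ bounds in the orthogonality argument. Pinning down the value $1/\sqrt{2\pi}$ for $\Norm{f_\omega}^2$ is likewise where the analytic input (the Abelian--Tauberian computation, or a careful summation of the Plancherel--Rotach expansion) does the real work; everything else is the bookkeeping sketched above.
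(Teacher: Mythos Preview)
Your argument is correct, and for the orthogonality part it is essentially the paper's own proof: reduce everything to the partial sums $\sum_{k\le n}\PH{k}{\omega}\PH{k}{\sigma}$, apply the Christoffel--Darboux identity \eqref{CDP}, and bound the boundary terms using the Plancherel--Rotach estimate $|\PH{n}{\omega}|=O(n^{-1/4})$; the parity trick (replacing $\omega$ by $-\omega$) to separate the even and odd subsums is exactly what the paper does via \eqref{equal}.

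The genuine difference is in the norm computation. The paper stays entirely within the Plancherel--Rotach asymptotic already invoked: it combines \eqref{sumsquares} with the Hermite relation $(\PH{n}{\omega})'=\sqrt{2n}\,\PH{n-1}{\omega}$ and then reads off $\lim_n (n+1)^{-1/2}\sum_{k\le n}\PH{k}{\omega}^2=\sqrt{2/\pi}\,\e^{\omega^2}$ directly from the asymptotic formula. Your route --- Mehler's formula plus the Hardy--Littlewood Tauberian theorem --- reaches the same constant without ever touching derivatives of $\PH{n}{\omega}$ or the precise oscillatory factor $\cos(\sqrt{2n+1}\,\omega-n\pi/2)$; only the nonnegativity of $\PH{n}{\omega}^2$ is used. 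This is arguably cleaner for the norm, but it is Hermite-specific (Mehler) and imports an extra Tauberian tool, whereas the paper's computation recycles the single asymptotic input $\PH{n}{\omega}\sim(2/\pi)^{1/4}n^{-1/4}\e^{\omega^2/2}\cos(\sqrt{2n+1}\,\omega-n\pi/2)$ throughout. Either way you still need a pointwise bound of Plancherel--Rotach type for the orthogonality step and for controlling $A_N-B_N$, so neither approach avoids that analytic input.
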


\begin{proof}
For all $\omega$ and for $n\rightarrow \infty$,
\[
\PH{n}{\omega}-
\frac{{\Gamma(n+1)}^{\frac{1}{2}}}{2^{\frac{n}{2}}
\Gamma(\frac{n}{2}+1)}\,
{\e}^{\frac{\omega^2}{2}}\cos\left(\sqrt{2n+1}\;\omega-
\frac{n\pi}{2}\right)\rightarrow 0;
\]
see, for example, 8.22.8 in \cite{SZG}. Using the
Stirling formula we get
\begin{equation}\label{asym}
\PH{n}{\omega}-\left(\frac{2}{\pi}\right)^
{\frac{1}{4}}\;n^{-\frac{1}{4}}\;
{\e}^{\frac{\omega^2}{2}}\cos\left(\sqrt{2n+1}\;\omega-
\frac{n\pi}{2}\right)\rightarrow 0.
\end{equation}
This fact and \eqref{CDP} are easily seen to imply that
$\Scal{f_{\omega} }{f_{\sigma}}=0$ for all distinct
$\omega,\sigma>0$, while \eqref{asym} and \eqref{equal} imply
that
\begin{equation}\label{eq}
\LIM{n}\left(\frac{\sum_{k=0}^{n}
\PH{{2k}}{\omega}^2}{\sqrt{2n+1}}-
\frac{\sum_{k=0}^{n-1}\PH{{2k+1}}{\omega}^2}{\sqrt{2n+1}}\right)
=0.
\end{equation}
Since $H_n^\prime(\omega)=2\;n\;H_{n-1}(\omega)$, we have
$\PH{n}{\omega}^\prime=\sqrt{2\;n}\;\PH{n-1}{\omega}$. Using
this, \eqref{asym} and \eqref{sumsquares} one can verify that
\begin{equation*}
\LIM{n}\frac{\sum_{k=0}^{n}\PH{{k}}{\omega}^2}{\sqrt{n+1}}=
\sqrt{\frac{2}{\pi}}\;{\e}^{\omega^2},
\end{equation*}
which, together with \eqref{eq}, implies that
$\Norm{f_\omega}={\e^{\omega^2/2}}/ {\sqrt[4]{2\pi}}.$
\end{proof}

Note that in this case, unlike the case of the family
associated with the Chebyshev polynomials, the norm of a pure
harmonic oscillation of unit amplitude depends on its
frequency.

One can verify that propositions similar to
Proposition~\ref{CE} and Proposition~\ref{HE} hold for other
classical families of orthogonal polynomials, such as the
Legendre polynomials. Our numerical tests indicate that the
following conjecture is true.\footnote{We have tested this
Conjecture numerically, by setting $\gamma_n=n^p$ for several
values of $p<1$, and in all cases a finite limit appeared to
exist. Paul Nevai has informed us that the special case of this
Conjecture for $p=0$ is known as Nevai-Totik Conjecture, and is
still open.}

\begin{conjecture}\label{conj1} Assume that for some $p<1$ the
recursion coefficients $\gamma_n$ in \eqref{poly} are such that
$\gamma_n/n^p$ converges to a finite positive limit. Then, for
the corresponding family of orthogonal polynomials we have
\begin{equation}\label{hyp}
0< \LIM{n}\frac{1}{(n+1)^{1-p}}\sum_{k=0}^{n}\PP{k}{\omega}^2 <\infty
\end{equation}
for all $\omega$ in the support $sp(a)$  of the corresponding
moment distribution function $a(\omega)$. Thus, in the corresponding
space $\CC_2$ all pure harmonic oscillations with positive
frequencies $\omega$ belonging to the support of the
moment distribution $a(\omega)$ have finite positive norm and are
mutually orthogonal.
\end{conjecture}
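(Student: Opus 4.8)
The plan is to reduce everything to the spectral estimate \eqref{hyp}: the assertion about $\CC_2$ follows from \eqref{hyp} exactly as Propositions~\ref{CE} and \ref{HE} were proved --- the Christoffel--Darboux identity \eqref{CDP} makes the relevant cross sums $o((n+1)^{1-p})$, so distinct $\sin\omega t$-type vectors are orthogonal, while \eqref{equal} together with \eqref{hyp} gives their norms --- so the content is \eqref{hyp}, which I would attack through the asymptotics of the orthonormal polynomials $\PP{n}{\omega}$ encoded in the recurrence \eqref{poly}. Write \eqref{poly} as the linear system $v_{n+1}=A_n(\omega)\,v_n$ with $v_n=(\PP{n}{\omega},\PP{n-1}{\omega})^{T}$ and $A_n(\omega)=\left(\begin{smallmatrix}\omega/\gamma_n & -\gamma_{n-1}/\gamma_n\\ 1 & 0\end{smallmatrix}\right)$. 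Under the hypothesis $\gamma_n/n^p\to L>0$ we have $\omega/\gamma_n\to 0$ and $\gamma_{n-1}/\gamma_n\to 1$, so $A_n(\omega)\to\left(\begin{smallmatrix}0&-1\\1&0\end{smallmatrix}\right)$ and the eigenvalues $\lambda_n^{\pm}(\omega)$ of $A_n(\omega)$ are complex conjugate, of modulus $(\gamma_{n-1}/\gamma_n)^{1/2}$, and tend to $\pm\ii$ --- two exponents of \emph{equal} modulus. Since $\prod_{k=1}^{n}|\lambda_k^{+}(\omega)|^{2}=\prod_{k=1}^{n}\gamma_{k-1}/\gamma_k=\gamma_0/\gamma_n\sim\gamma_0 L^{-1}n^{-p}$, the natural object is the normalised polynomial $\widetilde P_n(\omega)=\gamma_{n-1}^{1/2}\PP{n}{\omega}$, whose transfer matrices have determinant tending to $1$ and are uniformly close to rotations; granted that $\widetilde P_n(\omega)$ stays bounded and, in consecutive pairs, bounded away from $0$, the identity $\sum_{k=0}^{n}\PP{k}{\omega}^2=\sum_{k=0}^{n}\gamma_{k-1}^{-1}\widetilde P_k(\omega)^2$ together with $\sum_{k=0}^{n}\gamma_{k-1}^{-1}\sim L^{-1}(1-p)^{-1}n^{1-p}$ already explains the exponent $1-p$.

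For the existence (and value) of the limit the aim is to prove the refined asymptotics $\PP{n}{\omega}=\gamma_n^{-1/2}\bigl(2\,\mathrm{Re}(c(\omega)\,{\e}^{\ii\Phi_n(\omega)})+o(1)\bigr)$, with $\Phi_n(\omega)=\sum_{k<n}\arg\lambda_k^{+}(\omega)$ and $c(\omega)\neq 0$, by diagonalising the system and invoking a discrete Levinson/Benzaid--Lutz asymptotic-integration theorem. That $c(\omega)\neq 0$ comes for free: $\PP{n}{\omega}$ is real, hence of the form $2\,\mathrm{Re}(c\,u_n^{+})$ in the diagonalising basis with $u_n^{-}=\overline{u_n^{+}}$, and $c(\omega)=0$ would force $\PP{n}{\omega}\equiv 0$, contradicting $\PP{0}{\omega}=1$. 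Granting these asymptotics one reads off $\widetilde P_n(\omega)^2+\widetilde P_{n+1}(\omega)^2\to 4|c(\omega)|^2>0$ from $\Phi_{n+1}-\Phi_n\to\pi/2$, which supplies the two-sided bounds of the previous paragraph; and, squaring and summing, the fact that $\arg\lambda_k^{+}(\omega)=\pi/2+O(k^{-p})$ lets one pair consecutive terms so that $\cos^2$ averages to $\tfrac12$ with an error which, weighted by $\gamma_k^{-1}$, is $o(n^{1-p})$, while the residual oscillation $\sum_k\gamma_k^{-1}\cos(2\Phi_k+2\arg c)$ telescopes similarly; hence $\sum_{k=0}^{n}\PP{k}{\omega}^{2}=2|c(\omega)|^{2}\sum_{k=0}^{n}\gamma_k^{-1}+o(n^{1-p})\sim\dfrac{2|c(\omega)|^{2}}{L(1-p)}\,n^{1-p}$, which is \eqref{hyp}. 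This runs at every point of the interior of the essential spectrum --- for $p>0$ that is all of $\Rset$, while for $p=0$ the endpoints of the support behave differently and need separate treatment.

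The main obstacle is precisely that the exponents $\lambda_n^{\pm}(\omega)$ have equal modulus, so Poincar\'e--Perron yields nothing and the asymptotic integration demands that the off-diagonal remainder $V_n$ of the diagonalised system be absolutely summable (an $\ell^1$, i.e.\ bounded-variation, condition). This $V_n$ is controlled by $\omega/\gamma_n$ and by the second differences of $\gamma_n$ (equivalently of $1/\gamma_n$); for $\gamma_n=n^p$ one has $\|V_n\|=O(n^{-p-1})\in\ell^1$ and the whole scheme goes through verbatim, consistently with the reported numerical tests, but for a general sequence with only $\gamma_n/n^p\to L$ those second differences need not be summable, so $V_n$ need not lie in $\ell^1$ and then the existence of the limit --- indeed already the two-sided bounds --- is genuinely open, the case $p=0$ being the Nevai--Totik conjecture. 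I would therefore first establish \eqref{hyp} under the additional hypothesis that $\gamma_n/n^p$ is of bounded variation (which covers $\gamma_n=n^p$ and the classical families of orthogonal polynomials), and isolate the removal of that hypothesis as the single remaining, and hard, step.
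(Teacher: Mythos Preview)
The statement you are addressing is not a theorem in the paper but a \emph{conjecture}: the paper offers no proof, only numerical evidence and the remark (in a footnote) that the special case $p=0$ is the Nevai--Totik conjecture, still open. So there is no proof in the paper to compare your proposal against.

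That said, your proposal is a sensible research outline and is honest about exactly the right thing. Your reduction of the $\CC_2$ assertion to \eqref{hyp} via \eqref{CDP} and \eqref{equal} is correct and matches how the paper handles the Chebyshev and Hermite cases. Your transfer-matrix/WKB scheme is the standard route to pointwise asymptotics of orthonormal polynomials from the three-term recurrence, and your identification of the obstruction --- equal-modulus Floquet exponents, so that discrete Levinson/Benzaid--Lutz requires an $\ell^1$ remainder, which the bare hypothesis $\gamma_n/n^p\to L$ does not guarantee --- is exactly why the conjecture is open. Your proposed partial result (assume $\gamma_n/n^p$ of bounded variation, or more precisely that the diagonalised remainder is $\ell^1$) is reasonable and would cover $\gamma_n=n^p$ and the classical families, consistent with the paper's numerical tests; this would be a genuine contribution beyond what the paper contains. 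Two small cautions: for $p=0$ your stated bound $\|V_n\|=O(n^{-p-1})$ is not summable, so the constant-$\gamma_n$ case must be handled separately (it is explicit), and you should check that your phase-pairing argument for the Ces\`aro-like average really yields a limit rather than only two-sided bounds when $0<p<1$, since the oscillatory error after pairing is $O(n^{-p})$ termwise and its partial sums need a further cancellation to be $o(n^{1-p})$.
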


Note that \eqref{sumsquares} implies that \eqref{hyp} is
equivalent to
\[
0< \LIM{n}\frac{\PP{{n+1}}{\omega}^\prime
\PP{{n}}{\omega}-\PP{{n+1}}{\omega}
\PP{{n}}{\omega}^\prime}{(n+1)^{1-2p}}<\infty.
\]

\section{Remarks}
The special case of the chromatic derivatives presented in
Example 2 were first introduced in \cite{IG0}; the
corresponding chromatic expansions were subsequently introduced
in \cite{IG00}. These concepts emerged in the course of the
author's design of a pulse width modulation power amplifier.
The research team of the author's startup, \emph{Kromos
Technology Inc.,} extended these notions to various systems
corresponding to several classical families of orthogonal
polynomials \cite{CH,HB}. We also designed and implemented a
channel equalizer \cite{H} and a digital transceiver
(unpublished), based on chromatic expansions. A novel image
compression method motivated by chromatic expansions was
developed in \cite{C0,C1}. In \cite{CNV} chromatic expansions
were related to the work of Papoulis \cite{Pap} and
Vaidyanathan \cite{VA}. In \cite{NIV} and \cite{VIN} the theory
was cast in the framework commonly used in signal processing.
Chromatic expansions were also studied in \cite{CH}, \cite{B}
and and \cite{WS}. Local convergence of chromatic expansions
was studied in \cite{IG5}; local approximations based on
trigonometric functions were introduced in \cite{IG6}.  A
generalization of chromatic derivatives, with the prolate
spheroidal wave functions replacing orthogonal polynomials, was
introduced in \cite{Gil}; the theory was also extended to the
prolate spheroidal wavelet series that combine
chromatic series with sampling series.\\

\noindent\textbf{Note:} Some Kromos technical reports and some
manuscripts can be found at the author's web site
\texttt{http://www.cse.unsw.edu.au/\~{}{ignjat}/diff/}.

\bibliographystyle{plain}

\end{document}